\theoremstyle{plain}
\newtheorem{corollary}{Corollary}
\newtheorem{definition}{Definition}
\newtheorem{example}{Example}
\newtheorem{lemma}{Lemma}
\newtheorem{proposition}{Proposition}
\newtheorem{remark}{Remark}
\numberwithin{equation}{section}
\begin{document}
\title[Pairwise Weakly Lindel\"{o}f Spaces]{On The Pairwise Weakly Lindel\"{o}f
Bitopological Spaces}

\author{ADEM KILI\c{C}MAN}
\address{Department of Mathematics, University Putra Malaysia,
43400 UPM, Serdang, Selangor, Malaysia.}
\email{akilicman@putra.upm.edu.my}
\author{ZABIDIN SALLEH}
\address{Department of Mathematics, University Malaysia
Terengganu, 21030 Kuala Terengganu, Terengganu, Malaysia.}
\email{bidisalleh@yahoo.com}

\date{\today}
\subjclass[2000]{54E55 }
\keywords{Bitopological space, \textit{ij}-weakly Lindel\"{o}f, pairwise
weakly Lindel\"{o}f, \textit{ij}-weakly Lindel\"{o}f relative, \textit{ij}%
-regular open, \textit{ij}-regular closed.}

\begin{abstract}
In this paper we shall introduce and study the notion of pairwise
weakly Lindel\"{o}f bitopological spaces and obtained some
results. Further, we also study the pairwise weakly Lindel\"{o}f
subspaces and subsets, investigate some of their properties and
show that a pairwise weakly Lindel\"{o}f property is not a
hereditary property.
\end{abstract}

\maketitle

\section{\textbf{Introduction}}

\noindent In literature there are several generalizations of the notion of Lindel\"{o}%
f spaces and these are studied separately for different reasons
and purposes. In 1959, Frolik \cite{Frol59} introduced and studied
the notion of weakly Lindel\"{o}f spaces that, afterward, was
studied by several authors. Thereafter in 1996, Cammaroto and
Santoro \cite{CammaSan96} studied and gave further new results
about these spaces.\\

\noindent The purpose of this paper is to define the notion of
weakly Lindel\"{o}f
spaces in bitopological spaces, which we will call pairwise weakly Lindel%
\"{o}f spaces and investigate some of their properties. Moreover
we study the pairwise weakly Lindel\"{o}f subspaces and subsets
and also investigate some of their properties. \\

\noindent In section $3$, we shall introduce the concept of pairwise weakly Lindel\"{o}%
f bitopological spaces by investigating the $ij$-weakly Lindel\"{o}f
property and obtain some results. Furthermore, we study the relation between
$ij$-almost Lindel\"{o}f spaces and $ij$-weakly Lindel\"{o}f spaces, where $%
i,j=1$ or $2$, $i\neq j$.\\

\noindent In section $4$, we shall define the concept of pairwise
weakly Lindel\"{o}f subspaces and subsets in bitopological spaces.
We shall define the concept of pairwise weakly Lindel\"{o}f
relative to a bitopological space by investigating the $ij$-weakly
Lindel\"{o}f property and obtain some results. The main result we
are obtain here is pairwise weakly Lindel\"{o}f property is not a
hereditary property by a counter-example obtained.

\section{\textbf{Preliminaries}}

Throughout this paper, all spaces $\left( X,\tau \right) $ and $\left(
X,\tau _{1},\tau _{2}\right) $ $($or simply $X)$ are always mean topological
spaces and bitopological spaces, respectively unless explicitly stated. By $%
i $-open set, we shall mean the open set with respect to topology
$\tau _{i}$ in $X$. We always use $ij$- to denote the certain
properties with respect to topology $\tau _{i}$ and $\tau _{j}$
respectively, where $i,j\in \left\{ 1,2\right\} $ and $i\neq j$.
In this paper, every result in terms of $ij$- will have pairwise
as a corollary.\\

\noindent By $i$-$\limfunc{int}\left( A\right) $ and $i$-$\limfunc{cl}\left( A\right) $%
, we shall mean the interior and the closure of a subset $A$ of $X$ with
respect to topology $\tau _{i}$, respectively. We denote by $\limfunc{int}%
\left( A\right) $ and $\limfunc{cl}\left( A\right) $ for the interior and
the closure of a subset $A$ of $X$ with respect to topology $\tau _{i}$ for
each $i=1,2$, respectively. By $i$-open cover of $X$, we mean that the cover
of $X$ by $i$-open sets in $X$; similar for the $ij$-regular open cover of $%
X $ and etc.\\

\noindent If $S\subseteq A\subseteq X$, then
$i$-$\limfunc{int}_{A}\left( S\right) $ and
$i$-$\limfunc{cl}_{A}\left( S\right) $ will be used to denote the
interior and closure of $S$ in the subspace $A$ with respect to topology $%
\tau _{i}$ respectively. This means that $i$-$\limfunc{int}\left( S\right)
=i $-$\limfunc{int}\left( S\right) \cap A$ and $i$-$\limfunc{cl}_{A}\left(
S\right) =i$-$\limfunc{cl}\left( S\right) \cap A$.

\begin{definition}[see \protect\cite{KhedShib91, SingAry71}]
A subset $S$ of a bitopological space $\left( X,\tau _{1},\tau _{2}\right) $
is said to be $ij$-regular open $($resp. $ij$-regular closed$)$ if $i$-$%
\limfunc{int}\left( j\text{-}\limfunc{cl}\left( S\right) \right) =S$ $($%
resp. $i$-$\limfunc{cl}\left( j\text{-}\limfunc{int}\left( S\right) \right)
=S)$, where $i,j\in \left\{ 1,2\right\} $ and $i\neq j$. $S$ is said
pairwise regular open $($resp. pairwise regular closed$)$ if it is both $ij$%
-regular open and $ji$-regular open $($resp. $ij$-regular closed and $ji$%
-regular closed$)$.
\end{definition}

\noindent The bitopology $\sigma _{i\text{ }}$generated by the
$ij$-regular open subsets of $\left( X,\tau _{1},\tau _{2}\right)
$ is denoted by $\tau _{ij}^{s}$ and it is called
$ij$-semiregularization of $X$. If $\sigma _{i}$ generated by
$ji$-regular open subsets of $\left( X,\tau _{1},\tau
_{2}\right) $, then it is called $ji$-semiregularization of $X$ denoted by $%
\tau _{ji}^{s}$. The bitopology $\sigma _{i}$ is called pairwise
semiregularization of $X$ if it is both $ij$-semiregularization of $X$ and $%
ji$-semiregularization of $X$ denoted by $\tau _{i}^{s}$. If $\tau
_{i}\equiv \tau _{ij}^{s}$, then $X$ is said to be $ij$-semiregular and if $%
\tau _{i}\equiv \tau _{ji}^{s}$, then $X$ is said $ji$-semiregular. $X$ is
said pairwise semiregular if it is both $ij$-semiregular and $ji$%
-semiregular, that is, whenever $\tau _{i}\equiv \tau _{i}^{s}$.

\begin{definition}[see \protect\cite{ForaHdeib83}]
\bigskip A bitopological space $\left( X,\tau _{1},\tau _{2}\right) $ is
said to be $i$-Lindel\"{o}f if the topological space $\left( X,\tau
_{i}\right) $ is Lindel\"{o}f. $X$ is said Lindel\"{o}f if it is $i$-Lindel%
\"{o}f for each $i=1,2$.
\end{definition}

\begin{definition}[see \protect\cite{BidiAdem1}]
A bitopological space $X$ is said to be $ij$-nearly Lindel\"{o}f if for
every $i$-open cover $\left\{ U_{\alpha }:\alpha \in \Delta \right\} $ of $X$%
, there exists a countable subset $\left\{ \alpha _{n}:n\in
%TCIMACRO{\U{2115} }%
%BeginExpansion
\mathbb{N}
%EndExpansion
\right\} $ of $\Delta $ such that $X=\dbigcup\limits_{n\in
%TCIMACRO{\U{2115} }%
%BeginExpansion
\mathbb{N}
%EndExpansion
}i$-$\limfunc{int}\left( j\text{-}\limfunc{cl}\left( U_{\alpha _{n}}\right)
\right) $, or equivalently, every $ij$-regular open cover of $X$ has a
countable subcover. $X$ is said pairwise nearly Lindel\"{o}f if it is both $%
ij$-nearly Lindel\"{o}f and $ji$-nearly Lindel\"{o}f.
\end{definition}

\begin{definition}[see \protect\cite{KhedShib91,SingSing70}]
A bitopological space $X$ is said to be $ij$-almost regular if for each $%
x\in X$ and for each $ij$-regular open set $V$ of $X$ containing $x$, there
is an $i$-open set $U$ such that $x\in U\subseteq j$-$\limfunc{cl}\left(
U\right) \subseteq V$. $X$ is said pairwise almost regular if it is both $ij$%
-almost regular and $ji$-almost regular.
\end{definition}

\begin{definition}[see \protect\cite{Kelly63, KhedShib91}]
A bitopological space $\left( X,\tau _{1},\tau _{2}\right) $ is said to be $%
ij$-regular if for each point $x\in X$ and for each $i$-open set $V$
containing $x$, there exists an $i$-open set $U$ such that $x\in U\subseteq
j $-$\limfunc{cl}\left( U\right) \subseteq V$. $X$ is said pairwise regular
if it is both $ij$-regular and $ji$-regular.
\end{definition}

\begin{definition}
A bitopological space $X$ is said to be $ij$-semiregular if for each $x\in X$
and for each $i$-open set $V$ of $X$ containing $x$, there is an $i$-open
set $U$ such that%
\begin{equation*}
x\in U\subseteq i\text{-}\limfunc{int}\left( j\text{-}\limfunc{cl}\left(
U\right) \right) \subseteq V.
\end{equation*}%
$X$ is said pairwise semi regular if it is both $ij$-semiregular and $ji$%
-semiregular.
\end{definition}

\begin{definition}
Let $\left( X,\tau _{1},\tau _{2}\right) $ be a bitopological space$:$

$\left( i\right) $ \ A subset $G$ of $X$ is said to be open if $G$ is both $%
1 $-open and $2$-open in $X$, or equivalently, $G\in U$ for $U\subseteq
\left( \tau _{1}\cap \tau _{2}\right) $ in $X$.

$\left( ii\right) $ \ A subset $F$ of $X$ is said to be closed if $F$ is
both $1$-closed and $2$-closed in $X$, or equivalently, $F\in V$ for $%
V=X\setminus U$ and $U\subseteq $ $\left( \tau _{1}\cap \tau _{2}\right) $
in $X$.
\end{definition}

\section{\textbf{Pairwise Weakly Lindel\"{o}f Spaces}}

\begin{definition}
A bitopological space $X$ is said to be $ij$-weakly Lindel\"{o}f if for
every $i$-open cover $\left\{ U_{\alpha }:\alpha \in \Delta \right\} $ of $X$%
, there exists a countable subset $\left\{ \alpha _{n}:n\in
%TCIMACRO{\U{2115} }%
%BeginExpansion
\mathbb{N}
%EndExpansion
\right\} $ of $\Delta $ such that $X=j$-$\limfunc{cl}\left(
\dbigcup\limits_{n\in
%TCIMACRO{\U{2115} }%
%BeginExpansion
\mathbb{N}
%EndExpansion
}U_{\alpha _{n}}\right) $. $X$ is said pairwise weakly Lindel\"{o}f if it is
both $ij$-weakly Lindel\"{o}f and $ji$-weakly Lindel\"{o}f.
\end{definition}

\begin{proposition}
A bitopological space $X$ is $ij$-weakly Lindel\"{o}f if and only if every
family $\left\{ C_{\alpha }:\alpha \in \Delta \right\} $ of $i$-closed
subsets of $X$ such that $\dbigcap\limits_{\alpha \in \Delta }C_{\alpha
}=\emptyset $ admits a countable subfamily $\left\{ C_{\alpha _{n}}:n\in
%TCIMACRO{\U{2115} }%
%BeginExpansion
\mathbb{N}
%EndExpansion
\right\} $ such that $j$-$\limfunc{int}\left( \dbigcap\limits_{n\in
%TCIMACRO{\U{2115} }%
%BeginExpansion
\mathbb{N}
%EndExpansion
}C_{\alpha _{n}}\right) =\emptyset $.
\end{proposition}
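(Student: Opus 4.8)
The plan is to prove the equivalence by pure complementation, exploiting the standard duality between $i$-open covers of $X$ and families of $i$-closed subsets of $X$ with empty intersection, together with the elementary identities $X\setminus j\text{-}\limfunc{cl}\left( A\right) =j\text{-}\limfunc{int}\left( X\setminus A\right) $ and $X\setminus j\text{-}\limfunc{int}\left( A\right) =j\text{-}\limfunc{cl}\left( X\setminus A\right) $, valid for every $A\subseteq X$, and the De Morgan law $X\setminus \dbigcup_{n}U_{\alpha _{n}}=\dbigcap_{n}\left( X\setminus U_{\alpha _{n}}\right) $. With these in hand the two directions are mirror images of one another.

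For the forward implication I would assume $X$ is $ij$-weakly Lindel\"{o}f and take any family $\left\{ C_{\alpha }:\alpha \in \Delta \right\} $ of $i$-closed sets with $\dbigcap_{\alpha \in \Delta }C_{\alpha }=\emptyset $. Setting $U_{\alpha }=X\setminus C_{\alpha }$, each $U_{\alpha }$ is $i$-open and $\dbigcup_{\alpha \in \Delta }U_{\alpha }=X\setminus \dbigcap_{\alpha \in \Delta }C_{\alpha }=X$, so $\left\{ U_{\alpha }:\alpha \in \Delta \right\} $ is an $i$-open cover of $X$. The hypothesis then yields a countable $\left\{ \alpha _{n}:n\in \mathbb{N}\right\} \subseteq \Delta $ with $X=j\text{-}\limfunc{cl}\left( \dbigcup_{n\in \mathbb{N}}U_{\alpha _{n}}\right) $, and applying the complement identities and De Morgan to this equality gives $j\text{-}\limfunc{int}\left( \dbigcap_{n\in \mathbb{N}}C_{\alpha _{n}}\right) =\emptyset $, which is exactly the required conclusion.

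For the converse I would reverse the same bookkeeping: starting from an $i$-open cover $\left\{ U_{\alpha }:\alpha \in \Delta \right\} $ of $X$, the sets $C_{\alpha }=X\setminus U_{\alpha }$ are $i$-closed with $\dbigcap_{\alpha \in \Delta }C_{\alpha }=X\setminus \dbigcup_{\alpha \in \Delta }U_{\alpha }=\emptyset $; the assumed property produces a countable subfamily with $j\text{-}\limfunc{int}\left( \dbigcap_{n\in \mathbb{N}}C_{\alpha _{n}}\right) =\emptyset $, and complementing once more gives $X=j\text{-}\limfunc{cl}\left( \dbigcup_{n\in \mathbb{N}}U_{\alpha _{n}}\right) $, so $X$ is $ij$-weakly Lindel\"{o}f.

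I do not anticipate any genuine obstacle here; the entire argument is set-theoretic bookkeeping around De Morgan's laws and the interior/closure complement relation. The only point that calls for a little care is keeping the two topologies straight: the cover consists of $i$-open sets while the ``smallness'' is measured by the $\tau _{j}$-closure (respectively $\tau _{j}$-interior), so the identity $X\setminus j\text{-}\limfunc{cl}\left( A\right) =j\text{-}\limfunc{int}\left( X\setminus A\right) $ must be invoked with respect to $\tau _{j}$ throughout. The pairwise statement then follows at once by applying the proven equivalence a second time with the roles of $i$ and $j$ interchanged.
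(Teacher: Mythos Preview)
Your proposal is correct and follows essentially the same approach as the paper: both directions are proved by passing to complements, invoking De Morgan and the identity $X\setminus j\text{-}\limfunc{cl}(A)=j\text{-}\limfunc{int}(X\setminus A)$ to translate between the $i$-open cover formulation and the $i$-closed family formulation. The paper's write-up is slightly more explicit in displaying the intermediate equalities, but the argument is the same set-theoretic bookkeeping you describe.
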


\begin{proof}
Let $\left\{ C_{\alpha }:\alpha \in \Delta \right\} $ is a family of $i$%
-closed subsets of $X$ such that $\dbigcap\limits_{\alpha \in \Delta
}C_{\alpha }=\emptyset $. Then $X=X\setminus $ $\dbigcap\limits_{\alpha \in
\Delta }C_{\alpha }=\dbigcup\limits_{\alpha \in \Delta }\left( X\setminus
C_{\alpha }\right) $, i.e., the family $\left\{ X\setminus C_{\alpha
}:\alpha \in \Delta \right\} $ is an $i$-open cover of $X$. Since $X$ is $ij$%
-weakly Lindel\"{o}f, there exists a countable subfamily $\left\{ X\setminus
C_{\alpha _{n}}:n\in
%TCIMACRO{\U{2115} }%
%BeginExpansion
\mathbb{N}
%EndExpansion
\right\} $ such that $X=j$-$\limfunc{cl}\left( \dbigcup\limits_{n\in
%TCIMACRO{\U{2115} }%
%BeginExpansion
\mathbb{N}
%EndExpansion
}\left( X\setminus C_{\alpha _{n}}\right) \right) $. So $X\setminus j$-$%
\limfunc{cl}\left( \dbigcup\limits_{n\in
%TCIMACRO{\U{2115} }%
%BeginExpansion
\mathbb{N}
%EndExpansion
}\left( X\setminus C_{\alpha _{n}}\right) \right) =\emptyset $, i.e., $j$-$%
\limfunc{int}\left( X\setminus \dbigcup\limits_{n\in
%TCIMACRO{\U{2115} }%
%BeginExpansion
\mathbb{N}
%EndExpansion
}\left( X\setminus C_{\alpha _{n}}\right) \right) =\emptyset $. Thus
\begin{equation*}
j\text{-}\limfunc{int}\left( \dbigcap\limits_{n\in
%TCIMACRO{\U{2115} }%
%BeginExpansion
\mathbb{N}
%EndExpansion
}C_{\alpha _{n}}\right) =\emptyset .
\end{equation*}
Conversely, let $\left\{ U_{\alpha }:\alpha \in \Delta \right\} $ be an $i$%
-open cover of $X$. Then $X=\dbigcup\limits_{\alpha \in \Delta }U_{\alpha }$
and $\left\{ X\setminus U_{\alpha }:\alpha \in \Delta \right\} $ is a family
of $i$-closed subsets of $X$. Hence $X\setminus \dbigcup\limits_{\alpha \in
\Delta }U_{\alpha }=\emptyset $, i.e., $\dbigcap\limits_{\alpha \in \Delta
}\left( X\setminus U_{\alpha }\right) =\emptyset $. By hypothesis, there
exists a countable subfamily $\left\{ X\setminus U_{\alpha _{n}}:n\in
%TCIMACRO{\U{2115} }%
%BeginExpansion
\mathbb{N}
%EndExpansion
\right\} $ such that $j$-$\limfunc{int}\left( \dbigcap\limits_{n\in
%TCIMACRO{\U{2115} }%
%BeginExpansion
\mathbb{N}
%EndExpansion
}\left( X\setminus U_{\alpha _{n}}\right) \right) =\emptyset $. So%
\begin{equation*}
X=X\setminus j\text{-}\limfunc{int}\left( \dbigcap\limits_{n\in
%TCIMACRO{\U{2115} }%
%BeginExpansion
\mathbb{N}
%EndExpansion
}\left( X\setminus U_{\alpha _{n}}\right) \right) =j\text{-}\limfunc{cl}%
\left( X\setminus \dbigcap\limits_{n\in
%TCIMACRO{\U{2115} }%
%BeginExpansion
\mathbb{N}
%EndExpansion
}\left( X\setminus U_{\alpha _{n}}\right) \right) =j\text{-}\limfunc{cl}%
\left( \dbigcup\limits_{n\in
%TCIMACRO{\U{2115} }%
%BeginExpansion
\mathbb{N}
%EndExpansion
}U_{\alpha _{n}}\right) .
\end{equation*}%
Therefore $X$ is $ij$-weakly Lindel\"{o}f.
\end{proof}

\begin{corollary}
A bitopological space $X$ is pairwise weakly Lindel\"{o}f if and only if
every family $\left\{ C_{\alpha }:\alpha \in \Delta \right\} $ of closed
subsets of $X$ such that $\dbigcap\limits_{\alpha \in \Delta }C_{\alpha
}=\emptyset $ admits a countable subfamily $\left\{ C_{\alpha _{n}}:n\in
%TCIMACRO{\U{2115} }%
%BeginExpansion
\mathbb{N}
%EndExpansion
\right\} $ such that $\limfunc{int}\left( \dbigcap\limits_{n\in
%TCIMACRO{\U{2115} }%
%BeginExpansion
\mathbb{N}
%EndExpansion
}C_{\alpha _{n}}\right) =\emptyset $.
\end{corollary}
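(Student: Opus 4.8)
The plan is to obtain the corollary as the ``pairwise shadow'' of the preceding Proposition, by specialising that $ij$-statement to the two index pairs $(i,j)=(1,2)$ and $(i,j)=(2,1)$ and reconciling the two countable subfamilies so produced. One must keep the overloaded notation straight: ``closed'' here means both $1$-closed and $2$-closed, ``$\operatorname{int}$'' means the interior with respect to $\tau_i$ for each $i=1,2$, and ``pairwise weakly Lindel\"{o}f'' means both $12$-weakly Lindel\"{o}f and $21$-weakly Lindel\"{o}f.

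For the forward implication, suppose $X$ is pairwise weakly Lindel\"{o}f and let $\{C_\alpha:\alpha\in\Delta\}$ be a family of closed subsets of $X$ with $\bigcap_{\alpha\in\Delta}C_\alpha=\emptyset$. Since every $C_\alpha$ is $1$-closed and $X$ is $12$-weakly Lindel\"{o}f, the preceding Proposition (applied with $i=1$, $j=2$) yields a countable set $\{\alpha_n:n\in\mathbb{N}\}\subseteq\Delta$ with $2$-$\operatorname{int}\left(\bigcap_{n\in\mathbb{N}}C_{\alpha_n}\right)=\emptyset$, and since every $C_\alpha$ is also $2$-closed and $X$ is $21$-weakly Lindel\"{o}f, the same Proposition (applied with $i=2$, $j=1$) yields a countable set $\{\beta_m:m\in\mathbb{N}\}\subseteq\Delta$ with $1$-$\operatorname{int}\left(\bigcap_{m\in\mathbb{N}}C_{\beta_m}\right)=\emptyset$. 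The combined family $\{C_{\alpha_n}:n\in\mathbb{N}\}\cup\{C_{\beta_m}:m\in\mathbb{N}\}$ is again a countable subfamily of $\{C_\alpha\}$, and its intersection is contained in each of the two intersections above; since the interior operators are monotone under inclusion, both the $1$-interior and the $2$-interior of this common intersection are empty, which is exactly the required conclusion that $\operatorname{int}$ of it is empty.

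For the converse, I would mirror the second half of the proof of the preceding Proposition for each of the two index pairs. Assuming the stated property, let $\{U_\alpha:\alpha\in\Delta\}$ be an $i$-open cover of $X$; then $\{X\setminus U_\alpha:\alpha\in\Delta\}$ has empty intersection, and once this family is seen to meet the hypothesis one extracts a countable subfamily whose $\operatorname{int}$ --- and in particular whose $j$-interior --- is empty, and complementing recovers $X=j$-$\operatorname{cl}\left(\bigcup_{n\in\mathbb{N}}U_{\alpha_n}\right)$; doing this for $(i,j)=(1,2)$ and then for $(i,j)=(2,1)$ shows $X$ is pairwise weakly Lindel\"{o}f.

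The step I expect to be the real obstacle is the bookkeeping where the two index pairs meet. In the forward direction the Proposition returns two a priori different countable subfamilies, and the decisive observation is that passing to their union --- still countable --- only shrinks the intersection, so that monotonicity of $1$-$\operatorname{int}$ and $2$-$\operatorname{int}$ allows one countable subfamily to serve for both topologies at once. In the converse one must be careful that the complements of an $i$-open cover are only $i$-closed, so the statement is in effect the conjunction over $i=1,2$ of the $ij$-form of the preceding Proposition, in accordance with the paper's stated convention that every $ij$-result has a pairwise corollary.
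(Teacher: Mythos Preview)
Your forward direction is correct and, unlike the paper's one-line ``obvious by the definition,'' actually confronts the bookkeeping: the two applications of the Proposition yield two possibly different countable subfamilies, and you correctly merge them and invoke monotonicity of the two interior operators to obtain a single countable subfamily that works for both topologies at once.

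The converse, however, has a genuine gap that you flag but do not close. The hypothesis of the Corollary concerns only families that are closed in \emph{both} topologies, whereas to verify $12$-weak Lindel\"ofness you must start from an arbitrary $1$-open cover $\{U_\alpha\}$, and then the complements $X\setminus U_\alpha$ are merely $1$-closed, not closed in the paper's sense (Definition~7). Your phrase ``once this family is seen to meet the hypothesis'' hides a step that simply does not go through, and your closing remark that ``the statement is in effect the conjunction over $i=1,2$ of the $ij$-form'' is precisely what would need to be \emph{proved}. In fact it fails: take $X$ uncountable, $\tau_1$ the co-countable topology, $\tau_2$ the discrete topology. The sets closed in both topologies are exactly the countable sets together with $X$; given any such family with empty intersection, fix one countable member $C_{\alpha_0}$ and for each $x\in C_{\alpha_0}$ choose a member omitting $x$, producing a countable subfamily with empty intersection and hence empty $\operatorname{int}$. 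Thus the Corollary's condition holds. But the $2$-open cover of $X$ by singletons admits no countable subfamily with $1$-dense union (a countable union of singletons is countable, hence already $1$-closed and not equal to $X$), so $X$ is not $21$-weakly Lindel\"of and therefore not pairwise weakly Lindel\"of. The converse of the Corollary is thus false as stated; the paper's ``obvious by the definition'' is in error, and your instinct that this direction is where the real obstacle lies was exactly right.
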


\begin{proof}
It is obvious by the definition.
\end{proof}

\begin{proposition}
A bitoplogical space $X$ is $ij$-weakly Lindel\"{o}f if and only if for
every family $\left\{ C_{\alpha }:\alpha \in \Delta \right\} $ by $i$-closed
subsets of $X$, there exists a countable subfamily $\left\{ C_{\alpha
_{n}}:n\in
%TCIMACRO{\U{2115} }%
%BeginExpansion
\mathbb{N}
%EndExpansion
\right\} $ such that $j$-$\limfunc{int}\left( \dbigcap\limits_{n\in
%TCIMACRO{\U{2115} }%
%BeginExpansion
\mathbb{N}
%EndExpansion
}C_{\alpha _{n}}\right) \neq \emptyset $, the intersection $%
\dbigcap\limits_{\alpha \in \Delta }C_{\alpha }\neq \emptyset $.
\end{proposition}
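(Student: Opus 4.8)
The plan is to deduce this Proposition directly as the contrapositive reformulation of the preceding Proposition, so no new topological ingredient is required. First I would pin down what the inner clause says: the condition imposed on the family $\left\{ C_{\alpha }:\alpha \in \Delta \right\}$ is that \emph{for each} countable subfamily $\left\{ C_{\alpha _{n}}:n\in \mathbb{N}\right\}$ one has $j$-$\limfunc{int}\left( \dbigcap\limits_{n\in \mathbb{N}}C_{\alpha _{n}}\right) \neq \emptyset$; this is precisely the negation of the conclusion "some countable subfamily has $j$-$\limfunc{int}\left( \dbigcap\limits_{n\in \mathbb{N}}C_{\alpha _{n}}\right) =\emptyset$" appearing in the previous Proposition. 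Thus the whole biconditional to be proved is obtained from the previous one by contraposing, inside the universal quantifier over families of $i$-closed sets, the implication "$\dbigcap\limits_{\alpha \in \Delta }C_{\alpha }=\emptyset \Rightarrow$ some countable subfamily has empty $j$-interior of its intersection."

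For the forward implication I would argue by contradiction. Assume $X$ is $ij$-weakly Lindel\"{o}f and let $\left\{ C_{\alpha }:\alpha \in \Delta \right\}$ be a family of $i$-closed sets such that every countable subfamily $\left\{ C_{\alpha _{n}}\right\}$ satisfies $j$-$\limfunc{int}\left( \dbigcap\limits_{n\in \mathbb{N}}C_{\alpha _{n}}\right) \neq \emptyset$. If $\dbigcap\limits_{\alpha \in \Delta }C_{\alpha }$ were empty, then by the previous Proposition there would be a countable subfamily with $j$-$\limfunc{int}\left( \dbigcap\limits_{n\in \mathbb{N}}C_{\alpha _{n}}\right) =\emptyset$, contradicting the hypothesis on the family; hence $\dbigcap\limits_{\alpha \in \Delta }C_{\alpha }\neq \emptyset$.

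For the converse I would again contrapose against the previous Proposition. Suppose the stated condition holds but $X$ is not $ij$-weakly Lindel\"{o}f. By the previous Proposition there is then a family $\left\{ C_{\alpha }:\alpha \in \Delta \right\}$ of $i$-closed sets with $\dbigcap\limits_{\alpha \in \Delta }C_{\alpha }=\emptyset$ such that \emph{no} countable subfamily has empty $j$-interior of its intersection, i.e. every countable subfamily $\left\{ C_{\alpha _{n}}\right\}$ satisfies $j$-$\limfunc{int}\left( \dbigcap\limits_{n\in \mathbb{N}}C_{\alpha _{n}}\right) \neq \emptyset$. Applying the assumed condition to this very family forces $\dbigcap\limits_{\alpha \in \Delta }C_{\alpha }\neq \emptyset$, a contradiction. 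Therefore $X$ is $ij$-weakly Lindel\"{o}f.

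Alternatively, one could give a fully self-contained argument paralleling the proof of the previous Proposition, passing from a family $\left\{ C_{\alpha }\right\}$ of $i$-closed sets (with empty intersection) to the $i$-open cover $\left\{ X\setminus C_{\alpha }\right\}$ and using the identities $X\setminus j$-$\limfunc{cl}\left( A\right) =j$-$\limfunc{int}\left( X\setminus A\right)$ and $X\setminus \dbigcup\limits_{n\in \mathbb{N}}\left( X\setminus C_{\alpha _{n}}\right) =\dbigcap\limits_{n\in \mathbb{N}}C_{\alpha _{n}}$. I do not expect a genuine obstacle anywhere; the only point demanding care is the quantifier bookkeeping, namely keeping straight that "for each countable subfamily the $j$-interior is nonempty" is exactly the negation of "there exists a countable subfamily whose $j$-interior is empty," and that the universal quantifier over families of $i$-closed sets is shared verbatim by both statements.
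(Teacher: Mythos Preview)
Your proposal is correct and matches the paper's approach: the paper's forward direction argues by contradiction via complements exactly as you do (unfolding the definition of $ij$-weakly Lindel\"{o}f rather than citing the previous Proposition, but this is the same argument), and for the converse the paper gives two methods, the first of which is precisely your contrapositive reduction and the second a direct contradiction argument along the lines of your sketched alternative. Your clarification that the hypothesis should be read as ``for \emph{every} countable subfamily the $j$-interior is nonempty'' is also the reading implicit in the paper's proof.
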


\begin{proof}
Let $\left\{ C_{\alpha }:\alpha \in \Delta \right\} $ be a family of $i$%
-closed subsets of $X$ with there exists a countable subfamily $\left\{
C_{\alpha _{n}}:n\in
%TCIMACRO{\U{2115} }%
%BeginExpansion
\mathbb{N}
%EndExpansion
\right\} $ such that $j$-$\limfunc{int}\left( \dbigcap\limits_{n\in
%TCIMACRO{\U{2115} }%
%BeginExpansion
\mathbb{N}
%EndExpansion
}C_{\alpha _{n}}\right) \neq \emptyset $. Suppose that $\dbigcap\limits_{%
\alpha \in \Delta }C_{\alpha }=\emptyset $. Hence $X=X\setminus $ $%
\dbigcap\limits_{\alpha \in \Delta }C_{\alpha }=\dbigcup\limits_{\alpha \in
\Delta }\left( X\setminus C_{\alpha }\right) $. Thus $\left\{ X\setminus
C_{\alpha }:\alpha \in \Delta \right\} $ forms an $i$-open cover for $X$.
Since $X$ is $ij$-weakly Lindel\"{o}f, there exists a countable subset $%
\left\{ \alpha _{n}:n\in
%TCIMACRO{\U{2115} }%
%BeginExpansion
\mathbb{N}
%EndExpansion
\right\} $ of $\Delta $ such that $X=j$-$\limfunc{cl}\left(
\dbigcup\limits_{n\in
%TCIMACRO{\U{2115} }%
%BeginExpansion
\mathbb{N}
%EndExpansion
}\left( X\setminus C_{\alpha _{n}}\right) \right) $. Hence $X\setminus j$-$%
\limfunc{cl}\left( \dbigcup\limits_{n\in
%TCIMACRO{\U{2115} }%
%BeginExpansion
\mathbb{N}
%EndExpansion
}\left( X\setminus C_{\alpha _{n}}\right) \right) =\emptyset $, i.e., $j$-$%
\limfunc{int}\left( X\setminus \dbigcup\limits_{n\in
%TCIMACRO{\U{2115} }%
%BeginExpansion
\mathbb{N}
%EndExpansion
}\left( X\setminus C_{\alpha _{n}}\right) \right) =\emptyset $. Thus
\begin{equation*}
j\text{-}\limfunc{int}\left( \dbigcap\limits_{n\in
%TCIMACRO{\U{2115} }%
%BeginExpansion
\mathbb{N}
%EndExpansion
}C_{\alpha _{n}}\right) =\emptyset
\end{equation*}
which is a contradiction.\\

\noindent We will give two methods to prove the converse of the
Proposition.\\

\noindent \textit{First Method}: Let the condition is holds. By
contrapositive, this
condition is equivalent to the following statement: if for every family $%
\left\{ C_{\alpha }:\alpha \in \Delta \right\} $ by $i$-closed subsets of $X$
such that the intersection $\dbigcap\limits_{\alpha \in \Delta }C_{\alpha
}=\emptyset $, then there exists a countable subfamily $\left\{ C_{\alpha
_{n}}:n\in
%TCIMACRO{\U{2115} }%
%BeginExpansion
\mathbb{N}
%EndExpansion
\right\} $ such that $j$-$\limfunc{int}\left( \dbigcap\limits_{n\in
%TCIMACRO{\U{2115} }%
%BeginExpansion
\mathbb{N}
%EndExpansion
}C_{\alpha _{n}}\right) =\emptyset $. Let $\left\{ U_{\alpha }:\alpha \in
\Delta \right\} $ be an $i$-open cover of $X$. Then $X=\dbigcup\limits_{%
\alpha \in \Delta }U_{\alpha }$ and $\left\{ X\setminus U_{\alpha }:\alpha
\in \Delta \right\} $ be a family of $i$-closed subsets of $X$. Hence $%
X\setminus \dbigcup\limits_{\alpha \in \Delta }U_{\alpha }=\emptyset $,
i.e., $\dbigcap\limits_{\alpha \in \Delta }\left( X\setminus U_{\alpha
}\right) =\emptyset $. By hypothesis, there exists a countable subfamily $%
\left\{ X\setminus U_{\alpha _{n}}:n\in
%TCIMACRO{\U{2115} }%
%BeginExpansion
\mathbb{N}
%EndExpansion
\right\} $ such that $j$-$\limfunc{int}\left( \dbigcap\limits_{n\in
%TCIMACRO{\U{2115} }%
%BeginExpansion
\mathbb{N}
%EndExpansion
}\left( X\setminus U_{\alpha _{n}}\right) \right) =\emptyset $. So $%
X=X\setminus j$-$\limfunc{int}\left( \dbigcap\limits_{n\in
%TCIMACRO{\U{2115} }%
%BeginExpansion
\mathbb{N}
%EndExpansion
}\left( X\setminus U_{\alpha _{n}}\right) \right) =j$-$\limfunc{cl}\left(
X\setminus \dbigcap\limits_{n\in
%TCIMACRO{\U{2115} }%
%BeginExpansion
\mathbb{N}
%EndExpansion
}\left( X\setminus U_{\alpha _{n}}\right) \right) =j$-$\limfunc{cl}\left(
\dbigcup\limits_{n\in
%TCIMACRO{\U{2115} }%
%BeginExpansion
\mathbb{N}
%EndExpansion
}U_{\alpha _{n}}\right) $. Therefore $X$ is $ij$-weakly
Lindel\"{o}f.\\

\noindent \textit{Second Method}: \ Suppose that $X$ is not
$ij$-weakly Lindel\"{o}f. Then there exists an $i$-open cover
$\left\{ U_{\alpha }:\alpha \in \Delta \right\} $ of $X$ with no
countable subfamily $\left\{ U_{\alpha _{n}}:n\in
%TCIMACRO{\U{2115} }%
%BeginExpansion
\mathbb{N}
%EndExpansion
\right\} $ such that $X=j$-$\limfunc{cl}\left( \dbigcup\limits_{n\in
%TCIMACRO{\U{2115} }%
%BeginExpansion
\mathbb{N}
%EndExpansion
}U_{\alpha _{n}}\right) $. Hence $X\neq j$-$\limfunc{cl}\left(
\dbigcup\limits_{n\in
%TCIMACRO{\U{2115} }%
%BeginExpansion
\mathbb{N}
%EndExpansion
}U_{\alpha _{n}}\right) $ for any countable subfamily $\left\{ U_{\alpha
_{n}}:n\in
%TCIMACRO{\U{2115} }%
%BeginExpansion
\mathbb{N}
%EndExpansion
\right\} $. It follows that $X\setminus j$-$\limfunc{cl}\left(
\dbigcup\limits_{n\in
%TCIMACRO{\U{2115} }%
%BeginExpansion
\mathbb{N}
%EndExpansion
}U_{\alpha _{n}}\right) \neq \emptyset $, i.e., $j$-$\limfunc{int}\left(
X\setminus \dbigcup\limits_{n\in
%TCIMACRO{\U{2115} }%
%BeginExpansion
\mathbb{N}
%EndExpansion
}U_{\alpha _{n}}\right) \neq \emptyset $ or $j$-$\limfunc{int}\left(
\dbigcap\limits_{n\in
%TCIMACRO{\U{2115} }%
%BeginExpansion
\mathbb{N}
%EndExpansion
}\left( X\setminus U_{\alpha _{n}}\right) \right) \neq \emptyset $. Thus $%
\left\{ X\setminus U_{\alpha }:\alpha \in \Delta \right\} $ is a family of $%
i $-closed subsets of $X$ satisfies $j$-$\limfunc{int}\left(
\dbigcap\limits_{n\in
%TCIMACRO{\U{2115} }%
%BeginExpansion
\mathbb{N}
%EndExpansion
}\left( X\setminus U_{\alpha _{n}}\right) \right) \neq \emptyset $ for a
countable subfamily $\left\{ X\setminus U_{\alpha _{n}}:n\in
%TCIMACRO{\U{2115} }%
%BeginExpansion
\mathbb{N}
%EndExpansion
\right\} $. By hypothesis, the intersection $\dbigcap\limits_{\alpha \in
\Delta }\left( X\setminus U_{\alpha }\right) \neq \emptyset $, and so $%
X\setminus \dbigcup\limits_{\alpha \in \Delta }U_{\alpha }\neq \emptyset $,
i.e., $X\neq \dbigcup\limits_{\alpha \in \Delta }U_{\alpha }$. It is a
contradiction with the fact that $\left\{ U_{\alpha }:\alpha \in \Delta
\right\} $ is an $i$-open cover of $X$. Therefore $X$ is $ij$-weakly Lindel%
\"{o}f.
\end{proof}

\begin{corollary}
A bitoplogical space $X$ is pairwise weakly Lindel\"{o}f if and only if for
every family $\left\{ C_{\alpha }:\alpha \in \Delta \right\} $ by closed
subsets of $X$, there exists a countable subfamily $\left\{ C_{\alpha
_{n}}:n\in
%TCIMACRO{\U{2115} }%
%BeginExpansion
\mathbb{N}
%EndExpansion
\right\} $ such that $\limfunc{int}\left( \dbigcap\limits_{n\in
%TCIMACRO{\U{2115} }%
%BeginExpansion
\mathbb{N}
%EndExpansion
}C_{\alpha _{n}}\right) \neq \emptyset $, the intersection $%
\dbigcap\limits_{\alpha \in \Delta }C_{\alpha }\neq \emptyset $.
\end{corollary}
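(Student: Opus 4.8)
The paper's standing convention is that every statement phrased in terms of the pair $ij$ yields a pairwise statement as a corollary, and accordingly the cleanest way to read and prove this one is as the conjunction of the two instances of the preceding Proposition obtained by taking $(i,j)=(1,2)$ and $(i,j)=(2,1)$. The translation rests on two conventions from the Preliminaries: a subset of $X$ is closed iff it is both $1$-closed and $2$-closed, so any family $\left\{ C_{\alpha }:\alpha \in \Delta \right\}$ of closed sets is at once a family of $1$-closed sets and a family of $2$-closed sets; and a clause ``$\limfunc{int}\left( \,\cdot \,\right) \neq \emptyset $'' is to be read with respect to $\tau _{i}$ for each $i=1,2$.

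For the ``only if'' direction, assume $X$ is pairwise weakly Lindel\"{o}f, hence in particular $12$-weakly Lindel\"{o}f. Given a family $\left\{ C_{\alpha }\right\}$ of closed sets admitting a countable subfamily $\left\{ C_{\alpha _{n}}:n\in \mathbb{N}\right\}$ with $\limfunc{int}\left( \bigcap_{n\in \mathbb{N}}C_{\alpha _{n}}\right) \neq \emptyset $ --- in particular with $2$-$\limfunc{int}\left( \bigcap_{n\in \mathbb{N}}C_{\alpha _{n}}\right) \neq \emptyset $ --- and noting that each $C_{\alpha }$ is $1$-closed, the preceding Proposition applied with $(i,j)=(1,2)$ delivers $\bigcap_{\alpha \in \Delta }C_{\alpha }\neq \emptyset $, which is the desired conclusion; the $(i,j)=(2,1)$ instance works just as well.

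For the ``if'' direction I would run, for each admissible $(i,j)$, the contrapositive argument of the Second Method in the proof of the preceding Proposition: if $X$ fails to be $ij$-weakly Lindel\"{o}f one extracts an $i$-open cover $\left\{ U_{\alpha }\right\}$ whose complements $\left\{ X\setminus U_{\alpha }\right\}$ have $\bigcap_{\alpha \in \Delta }\left( X\setminus U_{\alpha }\right) =\emptyset $ while every countable subfamily has nonempty $j$-interior, and the hypothesis then forces a contradiction. The one point that needs care --- and which I expect to be the main obstacle --- is that $X\setminus U_{\alpha }$ is $i$-closed but need not be closed in the bitopological sense, so the hypothesis of the corollary, taken at face value, does not speak about this family; this is precisely the place where one must invoke the index-by-index reading of ``closed'' and ``$\limfunc{int}$'' fixed in the Preliminaries, after which the argument is nothing but the De Morgan / complementation computation already performed in the first Proposition and the preceding one, carried out once for $(i,j)=(1,2)$ and once for $(i,j)=(2,1)$. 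Combining the two instances gives that $X$ is both $12$- and $21$-weakly Lindel\"{o}f, i.e.\ pairwise weakly Lindel\"{o}f; this is the same reason the analogous Corollary following the first Proposition is recorded there as immediate from the definitions.
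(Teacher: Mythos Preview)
Your proposal is correct and matches the paper's approach: the paper records this corollary without proof, relying on the blanket convention from the Preliminaries that every $ij$-result yields a pairwise corollary, and your argument is precisely the unpacking of that convention --- apply the preceding Proposition once for $(i,j)=(1,2)$ and once for $(i,j)=(2,1)$. You are in fact more careful than the paper in flagging the mismatch between ``closed'' (meaning $1$-closed and $2$-closed) in the corollary's hypothesis and the merely $i$-closed complements $X\setminus U_{\alpha}$ arising in the converse direction; the paper simply glosses over this, treating the pairwise statement as the formal conjunction of the two $ij$-instances rather than as a literally stronger hypothesis on a smaller class of families.
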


\begin{definition}
A subset $E$ of a bitopological space $X$ is said to be $i$-dense in $%
F\subseteq X$ if $F\subseteq i$-$\limfunc{cl}\left( E\right) $. In
particular, $E$ is $i$-dense in $X$ or is an $i$-dense subset of $X$ if $i$-$%
\limfunc{cl}\left( E\right) =X$. $E$ is said to be dense in $F$ if it is $i$%
-dense in $F$ for each $i=1,2$. In particular, $E$ is dense in $X$ or is a
dense subset of $X$ if it is $i$-dense in $X$ or is an $i$-dense subset of $%
X $ for each $i=1,2$.
\end{definition}

\begin{proposition}
Let $\left( X,\tau _{1},\tau _{2}\right) $ be a bitopological space. For the
following conditions

$\left( i\right) $ $X$ is $ij$-weakly Lindel\"{o}f$;$

$\left( ii\right) $ every $ij$-regular open cover $\left\{ U_{\alpha
}:\alpha \in \Delta \right\} $ of $X$ admits a countable subfamily $\left\{
U_{\alpha _{n}}:n\in
%TCIMACRO{\U{2115} }%
%BeginExpansion
\mathbb{N}
%EndExpansion
\right\} $ with $j$-dense union in $X;$

$\left( iii\right) $ every family $\left\{ C_{\alpha }:\alpha \in \Delta
\right\} $ of $ij$-regular closed subsets of $X$ such that $%
\dbigcap\limits_{\alpha \in \Delta }C_{\alpha }=\emptyset $ admits a
countable subfamily $\left\{ C_{\alpha _{n}}:n\in
%TCIMACRO{\U{2115} }%
%BeginExpansion
\mathbb{N}
%EndExpansion
\right\} $ such that $j$-$\limfunc{int}\left( \dbigcap\limits_{n\in
%TCIMACRO{\U{2115} }%
%BeginExpansion
\mathbb{N}
%EndExpansion
}C_{\alpha _{n}}\right) =\emptyset ;$

we have that $\left( i\right) \Rightarrow \left( ii\right) \Leftrightarrow
\left( iii\right) $ and if $X$ is $ij$-semiregular, then $\left( ii\right)
\Rightarrow \left( i\right) $.
\end{proposition}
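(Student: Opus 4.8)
The cycle of implications breaks into three pieces: $(i)\Rightarrow(ii)$, $(ii)\Leftrightarrow(iii)$, and $(ii)\Rightarrow(i)$ under the $ij$-semiregular hypothesis.

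First I would prove $(i)\Rightarrow(ii)$. This is immediate: every $ij$-regular open set is in particular $i$-open (since $i\text{-}\limfunc{int}(j\text{-}\limfunc{cl}(S))$ is $i$-open), so an $ij$-regular open cover of $X$ is an $i$-open cover of $X$; applying the definition of $ij$-weakly Lindel\"{o}f to it yields a countable subfamily whose union is $j$-dense in $X$, which is exactly $(ii)$.

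Next, $(ii)\Leftrightarrow(iii)$. This is the standard duality between covers and families with empty intersection, carried out at the level of $ij$-regular sets. Given a family $\{C_\alpha\}$ of $ij$-regular closed sets with $\bigcap_\alpha C_\alpha=\emptyset$, the complements $U_\alpha=X\setminus C_\alpha$ are $ij$-regular open (complement of $ij$-regular closed is $ij$-regular open, since $X\setminus i\text{-}\limfunc{cl}(j\text{-}\limfunc{int}(C))=i\text{-}\limfunc{int}(j\text{-}\limfunc{cl}(X\setminus C))$ for complementary topologies — I would check the De Morgan identity $i\text{-}\limfunc{int}\circ j\text{-}\limfunc{cl}$ versus $i\text{-}\limfunc{cl}\circ j\text{-}\limfunc{int}$ carefully here, as this is the one spot where the $ij$-indexing can trip you up) and they cover $X$. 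Then $X=j\text{-}\limfunc{cl}(\bigcup_n U_{\alpha_n})$ iff $\emptyset=X\setminus j\text{-}\limfunc{cl}(\bigcup_n U_{\alpha_n})=j\text{-}\limfunc{int}(\bigcap_n C_{\alpha_n})$, which converts $(ii)$ into $(iii)$ and back. The argument is formally identical to the proof of the first Proposition in the excerpt, just restricted to the $ij$-regular open/closed setting, so I would essentially quote that computation.

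Finally, $(ii)\Rightarrow(i)$ assuming $X$ is $ij$-semiregular. Let $\{U_\alpha:\alpha\in\Delta\}$ be an arbitrary $i$-open cover of $X$. The point is to refine it to an $ij$-regular open cover. For each $x\in X$ pick $\alpha(x)$ with $x\in U_{\alpha(x)}$; by $ij$-semiregularity there is an $i$-open $V_x$ with $x\in V_x\subseteq i\text{-}\limfunc{int}(j\text{-}\limfunc{cl}(V_x))\subseteq U_{\alpha(x)}$. Set $W_x=i\text{-}\limfunc{int}(j\text{-}\limfunc{cl}(V_x))$; then $\{W_x:x\in X\}$ is an $ij$-regular open cover of $X$, so by $(ii)$ there is a countable $\{x_n:n\in\mathbb{N}\}$ with $X=j\text{-}\limfunc{cl}(\bigcup_n W_{x_n})$. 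Since each $W_{x_n}\subseteq U_{\alpha(x_n)}$, monotonicity of $j\text{-}\limfunc{cl}$ gives $X=j\text{-}\limfunc{cl}(\bigcup_n W_{x_n})\subseteq j\text{-}\limfunc{cl}(\bigcup_n U_{\alpha(x_n)})\subseteq X$, so the countable subfamily $\{U_{\alpha(x_n)}:n\in\mathbb{N}\}$ of the original cover has $j$-dense union, establishing $(i)$.

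**Expected main obstacle.** The only genuinely delicate point is getting the $ij$ versus $ji$ bookkeeping right in $(ii)\Leftrightarrow(iii)$: one must verify that the complement of an $ij$-regular closed set is $ij$-regular open (not $ji$-regular open), i.e. that the relevant De Morgan law pairs $i\text{-}\limfunc{int}\,j\text{-}\limfunc{cl}$ with $i\text{-}\limfunc{cl}\,j\text{-}\limfunc{int}$ consistently. Everything else is routine set-theoretic manipulation plus a direct appeal to the definitions and to the earlier Proposition.
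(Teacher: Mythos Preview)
Your proof is correct and follows essentially the same route as the paper: $(i)\Rightarrow(ii)$ is immediate, $(ii)\Leftrightarrow(iii)$ is the complementation duality, and $(ii)\Rightarrow(i)$ goes through an $ij$-regular open refinement under $ij$-semiregularity. Your treatment of $(ii)\Rightarrow(i)$ is in fact more careful than the paper's, which simply asserts ``we can assume that $U_{\alpha}$ is $ij$-regular open for each $\alpha$'' without spelling out the refinement and the passage back to the original cover as you do.
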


\begin{proof}
$\left( i\right) \Rightarrow \left( ii\right) $: It is obvious by the
definition since an $ij$-regular open set is also $i$-open set.

$\left( ii\right) \Leftrightarrow \left( iii\right) $: If $\left\{ C_{\alpha
}:\alpha \in \Delta \right\} $ is a family of $ij$-regular closed subsets of
$X$ such that $\dbigcap\limits_{\alpha \in \Delta }C_{\alpha }=\emptyset $,
then $X=X\setminus \dbigcap\limits_{\alpha \in \Delta }C_{\alpha
}=\dbigcup\limits_{\alpha \in \Delta }\left( X\setminus C_{\alpha }\right) $%
, i.e., the family $\left\{ X\setminus C_{\alpha }:\alpha \in \Delta
\right\} $ is an $ij$-regular open cover of $X$. By $\left( ii\right) $,
there exists a countable subfamily $\left\{ X\setminus C_{\alpha _{n}}:n\in
%TCIMACRO{\U{2115} }%
%BeginExpansion
\mathbb{N}
%EndExpansion
\right\} $ with $j$-dense union in $X$, i.e., $X=j$-$\limfunc{cl}\left(
\dbigcup\limits_{n\in
%TCIMACRO{\U{2115} }%
%BeginExpansion
\mathbb{N}
%EndExpansion
}\left( X\setminus C_{\alpha _{n}}\right) \right) $. So $X\setminus j$-$%
\limfunc{cl}\left( \dbigcup\limits_{n\in
%TCIMACRO{\U{2115} }%
%BeginExpansion
\mathbb{N}
%EndExpansion
}\left( X\setminus C_{\alpha _{n}}\right) \right) =\emptyset $, i.e.,
\begin{equation*}
j\text{-}\limfunc{int}\left( X\setminus \dbigcup\limits_{n\in
%TCIMACRO{\U{2115} }%
%BeginExpansion
\mathbb{N}
%EndExpansion
}\left( X\setminus C_{\alpha _{n}}\right) \right) =\emptyset .
\end{equation*}
Thus $j$-$\limfunc{int}\left( \dbigcap\limits_{n\in
%TCIMACRO{\U{2115} }%
%BeginExpansion
\mathbb{N}
%EndExpansion
}C_{\alpha _{n}}\right) =\emptyset $. Conversely, let $\left\{ U_{\alpha
}:\alpha \in \Delta \right\} $ be an $ij$-regular open cover of $X$. Then $%
X=\dbigcup\limits_{\alpha \in \Delta }U_{\alpha }$ and $\left\{ X\setminus
U_{\alpha }:\alpha \in \Delta \right\} $ is a family of $ij$-regular closed
subsets of $X$. Hence $X\setminus \dbigcup\limits_{\alpha \in \Delta
}U_{\alpha }=\emptyset $, i.e., $\dbigcap\limits_{\alpha \in \Delta }\left(
X\setminus U_{\alpha }\right) =\emptyset $. By $\left( iii\right) $, there
exists a countable subfamily $\left\{ X\setminus U_{\alpha _{n}}:n\in
%TCIMACRO{\U{2115} }%
%BeginExpansion
\mathbb{N}
%EndExpansion
\right\} $ such that $j$-$\limfunc{int}\left( \dbigcap\limits_{n\in
%TCIMACRO{\U{2115} }%
%BeginExpansion
\mathbb{N}
%EndExpansion
}\left( X\setminus U_{\alpha _{n}}\right) \right) =\emptyset $. So $%
X=X\setminus j$-$\limfunc{int}\left( \dbigcap\limits_{n\in
%TCIMACRO{\U{2115} }%
%BeginExpansion
\mathbb{N}
%EndExpansion
}\left( X\setminus U_{\alpha _{n}}\right) \right) =j$-$\limfunc{cl}\left(
X\setminus \dbigcap\limits_{n\in
%TCIMACRO{\U{2115} }%
%BeginExpansion
\mathbb{N}
%EndExpansion
}\left( X\setminus U_{\alpha _{n}}\right) \right) =j$-$\limfunc{cl}\left(
\dbigcup\limits_{n\in
%TCIMACRO{\U{2115} }%
%BeginExpansion
\mathbb{N}
%EndExpansion
}U_{\alpha _{n}}\right) $ and $\left( ii\right) $ proved.

$\left( ii\right) \Rightarrow \left( i\right) $: Let $\left\{ U_{\alpha
}:\alpha \in \Delta \right\} $ be an $i$-open cover of $X$. Since $X$ is $ij$%
-semiregular, we can assume that $U_{\alpha }$ is $ij$-regular open set for
each $\alpha $. By $\left( ii\right) $, there exists a countable subfamily $%
\left\{ U_{\alpha _{n}}:n\in
%TCIMACRO{\U{2115} }%
%BeginExpansion
\mathbb{N}
%EndExpansion
\right\} $ with $j$-dense union in $X$, i.e., $X=j$-$\limfunc{cl}\left(
\dbigcup\limits_{n\in
%TCIMACRO{\U{2115} }%
%BeginExpansion
\mathbb{N}
%EndExpansion
}U_{\alpha _{n}}\right) $. Thus $X$ is $ij$-weakly Lindel\"{o}f and this
completes the proof.
\end{proof}

\begin{corollary}
Let $\left( X,\tau _{1},\tau _{2}\right) $ be a bitopological space. For the
following conditions

$\left( i\right) $ $X$ is pairwise almost Lindel\"{o}f$;$

$\left( ii\right) $ every pairwise regular open cover $\left\{ U_{\alpha
}:\alpha \in \Delta \right\} $ of $X$ admits a countable subfamily $\left\{
U_{\alpha _{n}}:n\in
%TCIMACRO{\U{2115} }%
%BeginExpansion
\mathbb{N}
%EndExpansion
\right\} $ with dense union in $X;$

$\left( iii\right) $ every family $\left\{ C_{\alpha }:\alpha \in \Delta
\right\} $ of pairwise regular closed subsets of $X$ such that $%
\dbigcap\limits_{\alpha \in \Delta }C_{\alpha }=\emptyset $ admits a
countable subfamily $\left\{ C_{\alpha _{n}}:n\in
%TCIMACRO{\U{2115} }%
%BeginExpansion
\mathbb{N}
%EndExpansion
\right\} $ such that $\limfunc{int}\left( \dbigcap\limits_{n\in
%TCIMACRO{\U{2115} }%
%BeginExpansion
\mathbb{N}
%EndExpansion
}C_{\alpha _{n}}\right) =\emptyset ;$ we have that $\left(
i\right) \Rightarrow \left( ii\right) \Leftrightarrow \left(
iii\right) $ and if $X$ is pairwise semiregular, then $\left(
ii\right) \Rightarrow \left( i\right) $.\medskip
\end{corollary}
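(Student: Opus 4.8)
The plan is to obtain the corollary from the preceding Proposition applied twice, once with $\left( i,j\right) =\left( 1,2\right) $ and once with $\left( i,j\right) =\left( 2,1\right) $, after unpacking the pairwise vocabulary: $X$ satisfies $\left( i\right) $ iff it is both $12$-weakly Lindel\"{o}f and $21$-weakly Lindel\"{o}f; a cover is pairwise regular open iff it is both a $12$-regular open cover and a $21$-regular open cover; a family is pairwise regular closed iff it is both $12$- and $21$-regular closed; and ``dense union'' (resp.\ ``$\limfunc{int}$ of the intersection is $\emptyset $'') means $i$-dense union (resp.\ $i$-$\limfunc{int}$ of the intersection is $\emptyset $) for each $i=1,2$. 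The one extra ingredient needed beyond the Proposition is a routine bookkeeping observation: if the $12$-instance yields a countable subfamily with a closure/interior property relative to $\tau _{2}$ and the $21$-instance yields a countable subfamily with the analogous property relative to $\tau _{1}$, then the union of the two index sets is still countable and carries both properties simultaneously, since enlarging a subfamily only enlarges its union (so both closures stay equal to $X$) and only shrinks its intersection (so both interiors stay empty).

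Granting this, $\left( i\right) \Rightarrow \left( ii\right) $ goes as follows: a pairwise regular open cover is at once a $12$- and a $21$-regular open cover, so the implication $\left( i\right) \Rightarrow \left( ii\right) $ of the Proposition produces a countable subfamily with $2$-dense union and a countable subfamily with $1$-dense union, and the union of these is a countable subfamily with dense union. For $\left( ii\right) \Leftrightarrow \left( iii\right) $ I use that $X\setminus S$ is pairwise regular closed exactly when $S$ is pairwise regular open, so that a pairwise regular closed family with empty intersection is precisely the family of complements of a pairwise regular open cover and conversely; the interchange between ``$j$-$\limfunc{cl}$ of the union is $X$'' and ``$j$-$\limfunc{int}$ of the intersection is $\emptyset $'' is the same De Morgan computation as in the proof of the Proposition, carried out in both directions and finished with the bookkeeping step above.

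The step I expect to be the main obstacle is $\left( ii\right) \Rightarrow \left( i\right) $ under pairwise semiregularity. The Proposition's version of this implication takes an arbitrary $i$-open cover and, using $ij$-semiregularity, refines it to a cover by $ij$-regular open sets; but an $ij$-regular open set need not be $ji$-regular open, so the refined cover need not be a \emph{pairwise} regular open cover, and condition $\left( ii\right) $ of the corollary does not literally apply to it. The clean way around this is to read $\left( ii\right) $ the way the paper reads all of its pairwise statements, namely as the conjunction ``every $12$-regular open cover, and every $21$-regular open cover, admits a countable subfamily with dense union'', in which case the Proposition's $\left( ii\right) \Rightarrow \left( i\right) $ applies in each direction and $X$ is both $12$- and $21$-weakly Lindel\"{o}f, i.e.\ pairwise weakly Lindel\"{o}f. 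If instead one insists that every member of the cover in $\left( ii\right) $ be pairwise regular open, one would first have to show that pairwise semiregularity lets one refine an $i$-open cover to one by pairwise regular open sets, and that is where the additional work would lie.
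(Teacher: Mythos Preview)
The paper gives no proof for this corollary, relying on its blanket convention (stated in the Preliminaries) that every $ij$-result has a pairwise corollary; your plan of applying Proposition~3 for $(i,j)=(1,2)$ and $(2,1)$ and then merging the two countable index sets is precisely what that convention amounts to when spelled out, so your approach matches the paper's intent. Two remarks. First, condition $(i)$ of the corollary reads ``pairwise almost Lindel\"{o}f'' but, being the pairwise companion of Proposition~3 on $ij$-\emph{weakly} Lindel\"{o}f spaces, this is evidently a typo for ``pairwise weakly Lindel\"{o}f''; you read it correctly. Second, the subtlety you flag in $(ii)\Rightarrow(i)$ is genuine and is not addressed by the paper: a cover by \emph{pairwise} regular open sets is a stronger hypothesis on the cover than a cover by $ij$-regular open sets, so the literal pairwise $(ii)$ does not feed directly into the $ij$-version of Proposition~3. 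Your resolution---reading pairwise $(ii)$ as the conjunction of its $12$- and $21$-instances, in line with how the paper treats all of its pairwise notions---is the intended reading and is what makes the corollary go through.
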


\noindent It is very obvious that every $ij$-almost Lindel\"{o}f space is $ij$%
-weakly Lindel\"{o}f since%
\begin{equation*}
\dbigcup\limits_{n\in
%TCIMACRO{\U{2115} }%
%BeginExpansion
\mathbb{N}
%EndExpansion
}j\text{-}\limfunc{cl}\left( U_{\alpha _{n}}\right) \subseteq j\text{-}%
\limfunc{cl}\left( \dbigcup\limits_{n\in
%TCIMACRO{\U{2115} }%
%BeginExpansion
\mathbb{N}
%EndExpansion
}U_{\alpha _{n}}\right) .
\end{equation*}

\noindent \textbf{Question 1. \ }Is $ij$-weakly Lindel\"{o}f property imply $ij$%
-almost Lindel\"{o}f property?

\noindent The authors expected that the answer of this question is
no.\medskip

\begin{definition}
A bitopological space $X$ is said to be $i$-separable if there exists a
countable $i$-dense subset of $X$. $X$ is said separable if it is $i$%
-separable for each $i=1,2$.
\end{definition}

\begin{proposition}
If the bitopological space $X$ is $j$-separable, then it is $ij$-weakly
Lindel\"{o}f.
\end{proposition}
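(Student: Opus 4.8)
The plan is to use a countable $j$-dense subset of $X$ to extract the required countable subfamily directly from any given $i$-open cover, with essentially no extra work. First I would fix, by $j$-separability, a countable $j$-dense subset $D=\left\{ d_{n}:n\in \mathbb{N}\right\}$ of $X$, so that $j$-$\limfunc{cl}\left( D\right) =X$. Then I would take an arbitrary $i$-open cover $\left\{ U_{\alpha }:\alpha \in \Delta \right\}$ of $X$. Since this family covers $X$, it in particular covers $D$, so for each $n\in \mathbb{N}$ I can choose an index $\alpha _{n}\in \Delta$ with $d_{n}\in U_{\alpha _{n}}$. This requires only countably many choices, so $\left\{ U_{\alpha _{n}}:n\in \mathbb{N}\right\}$ is a countable subfamily of the cover.

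The second step is a one-line closure computation. From $d_{n}\in U_{\alpha _{n}}$ for every $n$ we obtain $D\subseteq \dbigcup\limits_{n\in \mathbb{N}}U_{\alpha _{n}}$. Applying the operator $j$-$\limfunc{cl}$, which is monotone, and using $j$-$\limfunc{cl}\left( D\right) =X$ gives
\begin{equation*}
X=j\text{-}\limfunc{cl}\left( D\right) \subseteq j\text{-}\limfunc{cl}\left( \dbigcup\limits_{n\in \mathbb{N}}U_{\alpha _{n}}\right) \subseteq X,
\end{equation*}
whence $X=j$-$\limfunc{cl}\left( \dbigcup\limits_{n\in \mathbb{N}}U_{\alpha _{n}}\right)$. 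This is exactly the defining condition for $ij$-weakly Lindel\"{o}f, so $X$ is $ij$-weakly Lindel\"{o}f.

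I do not expect any genuine obstacle here: the whole argument reduces to monotonicity of the closure operator applied to the $j$-dense set, and it needs no separation, regularity, or semiregularity hypothesis on $X$. The only minor points worth flagging are the (innocuous) appeal to countable choice in selecting the $\alpha _{n}$, and the remark that the argument is symmetric in the roles of $i$ and $j$ in the sense that it applies verbatim to any $i$-open cover for the fixed dense index $j$; consequently, if $X$ is separable — hence in particular $j$-separable for each $j$ — then $X$ is pairwise weakly Lindel\"{o}f, which can be stated as a corollary.
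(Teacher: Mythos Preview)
Your proof is correct and follows essentially the same approach as the paper: fix a countable $j$-dense set, pick for each of its points a member of the given $i$-open cover containing it, and use monotonicity of $j$-$\limfunc{cl}$ to conclude. Your write-up is in fact slightly cleaner, since you display the inclusion $j$-$\limfunc{cl}(D)\subseteq j$-$\limfunc{cl}\left(\bigcup_{n}U_{\alpha_n}\right)$ explicitly rather than writing a chain of equalities, and your remark about the pairwise corollary matches the paper's Corollary following this proposition.
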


\begin{proof}
Let $\mathcal{U}=\left\{ U_{\alpha }:\alpha \in \Delta \right\} $ be an $i$%
-open cover of the $j$-separable space $X$. Then $X$ has a countable $j$%
-dense subset $D=\left\{ x_{1},x_{2},\ldots ,x_{n},\ldots \right\} $. Now,
for every $x_{k}\in D$, there exists $\alpha _{k}\in \Delta $ with $x_{k}\in
U_{\alpha _{k}}$. So $X=j$-$\limfunc{cl}\left( D\right) =j$-$\limfunc{cl}%
\left( \dbigcup\limits_{k\in
%TCIMACRO{\U{2115} }%
%BeginExpansion
\mathbb{N}
%EndExpansion
}\left\{ x_{k}\right\} \right) =j$-$\limfunc{cl}\left( \dbigcup\limits_{k\in
%TCIMACRO{\U{2115} }%
%BeginExpansion
\mathbb{N}
%EndExpansion
}U_{\alpha _{k}}\right) $. This shows that $X$ is $ij$-weakly Lindel\"{o}f.
\end{proof}

\begin{corollary}
If the bitopological space $X$ is separable, then it is pairwise weakly
Lindel\"{o}f.
\end{corollary}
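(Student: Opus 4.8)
The plan is to deduce the corollary directly from Proposition~3.4 (the $j$-separable $\Rightarrow$ $ij$-weakly Lindel\"{o}f result), following the pattern used throughout the excerpt that ``every result in terms of $ij$- will have pairwise as a corollary.'' First I would recall that, by Definition~3.3, $X$ is separable precisely when it is $i$-separable for each $i=1,2$; in particular $X$ is both $1$-separable and $2$-separable. Applying Proposition~3.4 with the roles $i=1$, $j=2$: since $X$ is $2$-separable, $X$ is $12$-weakly Lindel\"{o}f. Applying Proposition~3.4 again with $i=2$, $j=1$: since $X$ is $1$-separable, $X$ is $21$-weakly Lindel\"{o}f.

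Having both $12$-weakly Lindel\"{o}f and $21$-weakly Lindel\"{o}f, I would invoke Definition~3.1, which says that $X$ is pairwise weakly Lindel\"{o}f exactly when it is both $ij$-weakly Lindel\"{o}f and $ji$-weakly Lindel\"{o}f. This yields that $X$ is pairwise weakly Lindel\"{o}f, completing the argument. There is no real obstacle here: the only thing to be careful about is the bookkeeping of indices — one must apply Proposition~3.4 once for each ordered pair $(i,j)$, using the correct separability hypothesis ($j$-separability is what feeds the $ij$-conclusion), rather than applying it only once. So the ``hard part,'' such as it is, amounts to noting that separability supplies $j$-separability for \emph{both} choices of $j$, which is immediate from the definition. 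A one- or two-line proof of the form ``It follows immediately from Proposition~3.4 and Definitions~3.1 and~3.3'' would suffice, but I would spell out the two applications explicitly for clarity.
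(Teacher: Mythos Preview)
Your proposal is correct and matches the paper's approach: the paper states this corollary immediately after the $j$-separable $\Rightarrow$ $ij$-weakly Lindel\"{o}f proposition without a separate proof, relying on the standing convention that every $ij$-result yields a pairwise corollary by applying it to both ordered pairs and invoking the definitions. Your explicit two-step application (using $2$-separability for the $12$-case and $1$-separability for the $21$-case) is exactly the intended unpacking.
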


\begin{definition}
A bitopological space $X$ is called $ij$-weak $P$-space if for each
countable family $\left\{ U_{n}:n\in
%TCIMACRO{\U{2115} }%
%BeginExpansion
\mathbb{N}
%EndExpansion
\right\} $ of $i$-open sets in $X$, we have $j$-$\limfunc{cl}\left(
\dbigcup\limits_{n\in
%TCIMACRO{\U{2115} }%
%BeginExpansion
\mathbb{N}
%EndExpansion
}U_{\alpha _{n}}\right) =\dbigcup\limits_{n\in
%TCIMACRO{\U{2115} }%
%BeginExpansion
\mathbb{N}
%EndExpansion
}j$-$\limfunc{cl}\left( U_{\alpha _{n}}\right) $. $X$ is called pairwise
weak $P$-space if it is both $ij$-weak $P$-space and $ji$-weak $P$-space.
\end{definition}

\begin{proposition}
In $ij$-weak $P$-spaces, $ij$-almost Lindel\"{o}f property is equivalent to $%
ij$-weakly Lindel\"{o}f property.
\end{proposition}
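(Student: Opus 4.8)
The plan is to prove the equivalence by treating the two implications separately, one of which is already on record. The forward direction, that every $ij$-almost Lindel\"{o}f space is $ij$-weakly Lindel\"{o}f, was noted in the text just above: for any $i$-open cover and any countable subfamily $\left\{ U_{\alpha _{n}}:n\in \mathbb{N}\right\}$ one has $\dbigcup_{n\in \mathbb{N}}j\text{-}\limfunc{cl}\left( U_{\alpha _{n}}\right) \subseteq j\text{-}\limfunc{cl}\left( \dbigcup_{n\in \mathbb{N}}U_{\alpha _{n}}\right)$, so an almost-Lindel\"{o}f countable subfamily is automatically a weakly-Lindel\"{o}f one. This inclusion uses no hypothesis on $X$, so in particular it applies in any $ij$-weak $P$-space.

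For the converse I would assume $X$ is an $ij$-weak $P$-space which is $ij$-weakly Lindel\"{o}f, and let $\left\{ U_{\alpha }:\alpha \in \Delta \right\}$ be an arbitrary $i$-open cover of $X$. First I would apply the $ij$-weakly Lindel\"{o}f property to obtain a countable subset $\left\{ \alpha _{n}:n\in \mathbb{N}\right\}$ of $\Delta$ with $X=j\text{-}\limfunc{cl}\left( \dbigcup_{n\in \mathbb{N}}U_{\alpha _{n}}\right)$. Next, since $\left\{ U_{\alpha _{n}}:n\in \mathbb{N}\right\}$ is a countable family of $i$-open sets, the defining property of an $ij$-weak $P$-space lets me interchange the $j$-closure with the countable union, giving $j\text{-}\limfunc{cl}\left( \dbigcup_{n\in \mathbb{N}}U_{\alpha _{n}}\right) =\dbigcup_{n\in \mathbb{N}}j\text{-}\limfunc{cl}\left( U_{\alpha _{n}}\right)$. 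Combining the two equalities yields $X=\dbigcup_{n\in \mathbb{N}}j\text{-}\limfunc{cl}\left( U_{\alpha _{n}}\right)$, which is precisely the condition for $X$ to be $ij$-almost Lindel\"{o}f.

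I do not expect any genuine obstacle here: the statement is a direct unwinding of the three definitions, the only working ingredient being the commutation equality of the $ij$-weak $P$-space applied to the countable subfamily produced by weak Lindel\"{o}fness. To finish I would remark that the corresponding pairwise statement follows immediately by running the argument for the $ij$- and $ji$-cases together, in keeping with the convention of the paper that every $ij$-result has its pairwise version as a corollary.
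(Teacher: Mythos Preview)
Your proof is correct and follows essentially the same approach as the paper: the paper's proof is a single sentence stating that the equivalence follows immediately from the identity $\dbigcup_{n\in\mathbb{N}} j\text{-}\limfunc{cl}\left(U_{\alpha_n}\right) = j\text{-}\limfunc{cl}\left(\dbigcup_{n\in\mathbb{N}} U_{\alpha_n}\right)$ in $ij$-weak $P$-spaces, and you have simply unpacked this by writing out the two implications explicitly.
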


\begin{proof}
The proof follows immediately from the fact that in $ij$-weak $P$-spaces, $%
\dbigcup\limits_{n\in
%TCIMACRO{\U{2115} }%
%BeginExpansion
\mathbb{N}
%EndExpansion
}j$-$\limfunc{cl}\left( U_{\alpha _{n}}\right) =j$-$\limfunc{cl}\left(
\dbigcup\limits_{n\in
%TCIMACRO{\U{2115} }%
%BeginExpansion
\mathbb{N}
%EndExpansion
}U_{\alpha _{n}}\right) $ for any countable family $\left\{ U_{n}:n\in
%TCIMACRO{\U{2115} }%
%BeginExpansion
\mathbb{N}
%EndExpansion
\right\} $ of $i$-open sets in $X$.
\end{proof}

\begin{corollary}
In pairwise weak $P$-spaces, pairwise almost Lindel\"{o}f property is
equivalent to pairwise weakly Lindel\"{o}f property.
\end{corollary}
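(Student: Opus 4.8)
The plan is to derive the statement directly from Proposition (the one establishing that every $ij$-almost Lindel\"{o}f space is $ij$-weakly Lindel\"{o}f, together with the reverse direction available inside $ij$-weak $P$-spaces). First I would recall the two containments that are always available for any countable family $\left\{ U_{\alpha_{n}} : n \in \mathbb{N} \right\}$ of $i$-open sets: on the one hand $\dbigcup_{n\in\mathbb{N}} j\text{-}\limfunc{cl}\left( U_{\alpha_{n}} \right) \subseteq j\text{-}\limfunc{cl}\left( \dbigcup_{n\in\mathbb{N}} U_{\alpha_{n}} \right)$, which holds because the closure of a union contains each individual closure; and on the other hand, by the defining property of an $ij$-weak $P$-space, the reverse containment $j\text{-}\limfunc{cl}\left( \dbigcup_{n\in\mathbb{N}} U_{\alpha_{n}} \right) \subseteq \dbigcup_{n\in\mathbb{N}} j\text{-}\limfunc{cl}\left( U_{\alpha_{n}} \right)$ also holds. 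Combining them gives the equality
\begin{equation*}
\dbigcup\limits_{n\in\mathbb{N}} j\text{-}\limfunc{cl}\left( U_{\alpha_{n}} \right) = j\text{-}\limfunc{cl}\left( \dbigcup\limits_{n\in\mathbb{N}} U_{\alpha_{n}} \right)
\end{equation*}
valid for every countable subfamily of $i$-open sets in an $ij$-weak $P$-space.

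Next I would run the two implications. For the forward direction, suppose $X$ is $ij$-almost Lindel\"{o}f and let $\left\{ U_{\alpha} : \alpha \in \Delta \right\}$ be an $i$-open cover of $X$. By $ij$-almost Lindel\"{o}fness there is a countable subset $\left\{ \alpha_{n} : n \in \mathbb{N} \right\}$ of $\Delta$ with $X = \dbigcup_{n\in\mathbb{N}} j\text{-}\limfunc{cl}\left( U_{\alpha_{n}} \right)$; since this union equals $j\text{-}\limfunc{cl}\left( \dbigcup_{n\in\mathbb{N}} U_{\alpha_{n}} \right)$ by the $ij$-weak $P$-space equality, we get $X = j\text{-}\limfunc{cl}\left( \dbigcup_{n\in\mathbb{N}} U_{\alpha_{n}} \right)$, so $X$ is $ij$-weakly Lindel\"{o}f. (This half is of course just the general implication noted before Question~1 and does not even need the $P$-space hypothesis.) For the converse, suppose $X$ is $ij$-weakly Lindel\"{o}f and let $\left\{ U_{\alpha} : \alpha \in \Delta \right\}$ be an $i$-open cover; pick a countable subfamily with $X = j\text{-}\limfunc{cl}\left( \dbigcup_{n\in\mathbb{N}} U_{\alpha_{n}} \right)$, and again invoke the $ij$-weak $P$-space equality to rewrite the right-hand side as $\dbigcup_{n\in\mathbb{N}} j\text{-}\limfunc{cl}\left( U_{\alpha_{n}} \right)$, giving $X = \dbigcup_{n\in\mathbb{N}} j\text{-}\limfunc{cl}\left( U_{\alpha_{n}} \right)$, which is exactly $ij$-almost Lindel\"{o}fness.

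There is essentially no obstacle here — the only ingredient beyond bookkeeping is the defining identity of an $ij$-weak $P$-space, and the whole argument is a one-line substitution in each direction, which is presumably why the authors' proof is only a sentence. The one thing I would be mildly careful about is making sure the relevant countable subfamily in each direction consists of $i$-open sets (so that the $P$-space identity applies), which it does since the $U_{\alpha}$ come from an $i$-open cover. The pairwise version then follows at once by applying the $ij$- statement with the roles of $i$ and $j$ interchanged, which is recorded as the corollary immediately afterward.
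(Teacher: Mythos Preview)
Your proposal is correct and follows exactly the paper's approach: the corollary is an immediate consequence of the preceding Proposition~5, whose one-line proof invokes precisely the identity $\dbigcup_{n\in\mathbb{N}} j\text{-}\limfunc{cl}\left( U_{\alpha_{n}} \right) = j\text{-}\limfunc{cl}\left( \dbigcup_{n\in\mathbb{N}} U_{\alpha_{n}} \right)$ in $ij$-weak $P$-spaces that you spell out. Your write-up simply unpacks that sentence into the two directions and then passes to the pairwise case by symmetry, which is all that is needed.
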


\begin{lemma}[see \protect\cite{AdemBidi}]
An $ij$-almost regular space is $ij$-almost Lindel\"{o}f if and only if it
is $ij$-nearly Lindel\"{o}f.
\end{lemma}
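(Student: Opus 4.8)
The plan is to prove the two implications separately and to isolate where the $ij$-almost regularity hypothesis is actually needed. The forward implication requires no hypothesis at all. Suppose $X$ is $ij$-nearly Lindel\"{o}f and let $\{U_{\alpha}:\alpha\in\Delta\}$ be an $i$-open cover of $X$. By definition there is a countable subset $\{\alpha_{n}:n\in\mathbb{N}\}$ of $\Delta$ with $X=\dbigcup\limits_{n\in\mathbb{N}}i$-$\limfunc{int}(j$-$\limfunc{cl}(U_{\alpha_{n}}))$. Since $i$-$\limfunc{int}(j$-$\limfunc{cl}(U_{\alpha_{n}}))\subseteq j$-$\limfunc{cl}(U_{\alpha_{n}})$ for each $n$, this immediately gives $X=\dbigcup\limits_{n\in\mathbb{N}}j$-$\limfunc{cl}(U_{\alpha_{n}})$, so $X$ is $ij$-almost Lindel\"{o}f. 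This half uses only the elementary inclusion $\limfunc{int}(\limfunc{cl}(\cdot))\subseteq\limfunc{cl}(\cdot)$ and holds in every bitopological space.

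For the converse, assume $X$ is $ij$-almost regular and $ij$-almost Lindel\"{o}f, and recall that $ij$-near Lindel\"{o}fness is equivalent, by the alternative formulation given in its definition, to the requirement that every $ij$-regular open cover of $X$ has a countable subcover. So I would start from an $ij$-regular open cover $\{V_{\alpha}:\alpha\in\Delta\}$ of $X$. For each $x\in X$ fix $\alpha(x)\in\Delta$ with $x\in V_{\alpha(x)}$; because $V_{\alpha(x)}$ is $ij$-regular open and $X$ is $ij$-almost regular, there is an $i$-open set $W_{x}$ with $x\in W_{x}\subseteq j$-$\limfunc{cl}(W_{x})\subseteq V_{\alpha(x)}$. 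The family $\{W_{x}:x\in X\}$ is then an $i$-open cover of $X$.

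Next I would apply $ij$-almost Lindel\"{o}fness to this $i$-open cover, obtaining countably many points $x_{1},x_{2},\ldots$ with $X=\dbigcup\limits_{n\in\mathbb{N}}j$-$\limfunc{cl}(W_{x_{n}})$. Since $j$-$\limfunc{cl}(W_{x_{n}})\subseteq V_{\alpha(x_{n})}$ by construction, it follows that $X=\dbigcup\limits_{n\in\mathbb{N}}j$-$\limfunc{cl}(W_{x_{n}})\subseteq\dbigcup\limits_{n\in\mathbb{N}}V_{\alpha(x_{n})}\subseteq X$, so $\{V_{\alpha(x_{n})}:n\in\mathbb{N}\}$ is a countable subcover of the original $ij$-regular open cover and $X$ is $ij$-nearly Lindel\"{o}f.

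The main obstacle, and the only place the hypothesis enters, is the shrinking step in the second paragraph: $ij$-almost regularity is exactly what allows each $ij$-regular open member $V_{\alpha(x)}$ to be replaced by a point-indexed $i$-open set $W_{x}$ whose $j$-closure is trapped inside $V_{\alpha(x)}$. Without this hypothesis one cannot guarantee such a small $i$-open neighbourhood, and the passage from an almost-Lindel\"{o}f selection back to an honest subcover of the regular open cover breaks down. Everything else --- the trivial inclusion in the first paragraph and the bookkeeping with $j$-$\limfunc{cl}(W_{x_{n}})\subseteq V_{\alpha(x_{n})}$ in the third --- is routine.
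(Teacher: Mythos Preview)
The paper does not supply its own proof of this lemma; it is quoted from \cite{AdemBidi} and used as a black box. Your argument is correct and is the expected one: the implication $ij$-nearly Lindel\"{o}f $\Rightarrow$ $ij$-almost Lindel\"{o}f is immediate from $i$-$\limfunc{int}(j$-$\limfunc{cl}(U))\subseteq j$-$\limfunc{cl}(U)$, and for the converse you use $ij$-almost regularity to shrink each $ij$-regular open $V_{\alpha(x)}$ to an $i$-open $W_{x}$ with $j$-$\limfunc{cl}(W_{x})\subseteq V_{\alpha(x)}$, so that the $j$-closures delivered by $ij$-almost Lindel\"{o}fness already sit inside members of the original regular open cover. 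One cosmetic remark: as the lemma is phrased, the ``forward'' direction is almost $\Rightarrow$ nearly, so your labels are swapped; the content is unaffected.
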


\begin{proposition}
An $ij$-weakly Lindel\"{o}f, $ij$-almost regular and $ij$-weak $P$-space is $%
ij$-nearly Lindel\"{o}f.
\end{proposition}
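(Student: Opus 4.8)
The plan is to chain together the two most recent results in this section. First I would observe that $X$, being an $ij$-weak $P$-space, satisfies the hypothesis of the preceding Proposition, which asserts that in $ij$-weak $P$-spaces the $ij$-almost Lindel\"{o}f property and the $ij$-weakly Lindel\"{o}f property coincide. Since $X$ is assumed to be $ij$-weakly Lindel\"{o}f, this step immediately yields that $X$ is $ij$-almost Lindel\"{o}f.

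Next I would invoke the Lemma quoted from \cite{AdemBidi}: an $ij$-almost regular space is $ij$-almost Lindel\"{o}f if and only if it is $ij$-nearly Lindel\"{o}f. As $X$ is $ij$-almost regular by hypothesis and $ij$-almost Lindel\"{o}f by the previous step, the Lemma gives that $X$ is $ij$-nearly Lindel\"{o}f, which is exactly the desired conclusion.

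I do not foresee any genuine obstacle; the statement is essentially a one-line composite of the preceding Proposition and the Lemma, and the only care required is to check that the hypotheses line up exactly, namely that ``$ij$-weak $P$-space'' is precisely the condition under which almost Lindel\"{o}f and weakly Lindel\"{o}f are equated, and that ``$ij$-almost regular'' is precisely the condition under which almost Lindel\"{o}f and nearly Lindel\"{o}f are equated. If one prefers an argument directly from the definitions, the same conclusion follows by: taking an $i$-open cover $\{U_\alpha : \alpha\in\Delta\}$, extracting by $ij$-weak Lindel\"{o}fness a countable subfamily $\{U_{\alpha_n}:n\in\mathbb{N}\}$ with $X = j\text{-}\mathrm{cl}\bigl(\bigcup_{n} U_{\alpha_n}\bigr)$; then using the $ij$-weak $P$-space property to write $j\text{-}\mathrm{cl}\bigl(\bigcup_{n} U_{\alpha_n}\bigr) = \bigcup_{n} j\text{-}\mathrm{cl}(U_{\alpha_n})$, so that $X$ is $ij$-almost Lindel\"{o}f; and finally applying $ij$-almost regularity (as in the proof of the Lemma) to replace each $j\text{-}\mathrm{cl}(U_{\alpha_n})$ by finer $ij$-regular open data and obtain a countable subcover in the $ij$-nearly Lindel\"{o}f sense. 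The short composite proof via the Proposition and the Lemma is, however, the cleaner presentation.
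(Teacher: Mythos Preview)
Your proposal is correct and matches the paper's own proof exactly: the paper simply states that the result is a direct consequence of Proposition~5 (equivalence of $ij$-almost Lindel\"{o}f and $ij$-weakly Lindel\"{o}f in $ij$-weak $P$-spaces) and Lemma~1 (equivalence of $ij$-almost Lindel\"{o}f and $ij$-nearly Lindel\"{o}f under $ij$-almost regularity). Your additional direct argument from the definitions is also fine but unnecessary here.
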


\begin{proof}
This is a direct consequence of Proposition 5\ and Lemma 1.
\end{proof}

\begin{corollary}
A pairwise weakly Lindel\"{o}f, pairwise almost regular and pairwise weak $P$%
-space is pairwise nearly Lindel\"{o}f.
\end{corollary}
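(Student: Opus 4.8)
The plan is to reduce the pairwise statement to the one-sided ($ij$-) version proved in the preceding Proposition, in exactly the way every ``pairwise'' corollary in this paper is obtained. Recall the conventions of Section 2: a space $X$ is \emph{pairwise weakly Lindel\"{o}f} precisely when it is both $12$-weakly Lindel\"{o}f and $21$-weakly Lindel\"{o}f; it is \emph{pairwise almost regular} precisely when it is both $12$-almost regular and $21$-almost regular; and it is a \emph{pairwise weak $P$-space} precisely when it is both a $12$-weak $P$-space and a $21$-weak $P$-space. Hence a space satisfying all three pairwise hypotheses satisfies simultaneously the three $ij$-hypotheses of the previous Proposition for the choice $(i,j)=(1,2)$ and, independently, for the choice $(i,j)=(2,1)$.

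First I would invoke the previous Proposition (the $ij$-version) with $(i,j)=(1,2)$: since $X$ is $12$-weakly Lindel\"{o}f, $12$-almost regular and a $12$-weak $P$-space, it is $12$-nearly Lindel\"{o}f. Then I would apply the same Proposition with the two topologies interchanged, i.e. with $(i,j)=(2,1)$: since $X$ is $21$-weakly Lindel\"{o}f, $21$-almost regular and a $21$-weak $P$-space, it is $21$-nearly Lindel\"{o}f. Appealing now to the definition of pairwise nearly Lindel\"{o}f (Definition 3), $X$ is pairwise nearly Lindel\"{o}f, which is exactly the assertion. One could equally well unfold this through the internals of the preceding Proposition's proof, applying Proposition 5 in both the $12$- and $21$-forms to see that $X$ is pairwise almost Lindel\"{o}f, and then Lemma 1 in both forms to pass from pairwise almost regularity plus pairwise almost Lindel\"{o}fness to pairwise near Lindel\"{o}fness; either route gives the same conclusion.

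There is essentially no obstacle here. The only point requiring (entirely routine) verification is that each hypothesis genuinely comes equipped with both its $12$-form and its $21$-form, which is guaranteed by the word ``pairwise'' in each assumption; no interaction between the two directions is needed, so the two applications of the Proposition are completely decoupled. The whole argument can thus be compressed to ``immediate from the preceding Proposition together with the definitions of the pairwise notions,'' but writing out the two symmetric applications makes the logic transparent.
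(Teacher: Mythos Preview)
Your proposal is correct and matches the paper's approach exactly: the paper states this corollary without proof, relying (as throughout) on the convention that each ``pairwise'' corollary follows immediately from the preceding $ij$-proposition applied for $(i,j)=(1,2)$ and $(i,j)=(2,1)$. Your explicit unpacking of the two symmetric applications of Proposition~6, together with the definitions of the pairwise notions, is precisely the intended argument.
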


\begin{lemma}[see \protect\cite{AdemBidi}]
An $ij$-regular space is $ij$-almost Lindel\"{o}f if and only if it is $i$%
-Lindel\"{o}f.
\end{lemma}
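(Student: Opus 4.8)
The plan is to establish the two implications separately, noting that $ij$-regularity is needed for only one direction.

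For the implication that the $i$-Lindel\"{o}f property implies the $ij$-almost Lindel\"{o}f property, I would take an arbitrary $i$-open cover $\left\{ U_{\alpha }:\alpha \in \Delta \right\} $ of $X$, use the $i$-Lindel\"{o}f property to extract a countable subcover $\left\{ U_{\alpha _{n}}:n\in \mathbb{N}\right\} $, and then observe that
\[
X=\dbigcup\limits_{n\in \mathbb{N}}U_{\alpha _{n}}\subseteq \dbigcup\limits_{n\in \mathbb{N}}j\text{-}\limfunc{cl}\left( U_{\alpha _{n}}\right) \subseteq X,
\]
so that $X=\dbigcup\limits_{n\in \mathbb{N}}j\text{-}\limfunc{cl}\left( U_{\alpha _{n}}\right) $ and hence $X$ is $ij$-almost Lindel\"{o}f. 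No regularity assumption enters here; in fact this shows that every $i$-Lindel\"{o}f space is $ij$-almost Lindel\"{o}f.

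For the converse, I would assume $X$ is $ij$-regular and $ij$-almost Lindel\"{o}f, and let $\left\{ V_{\alpha }:\alpha \in \Delta \right\} $ be an $i$-open cover of $X$. The key step is to use $ij$-regularity to pass to a refinement whose members' $j$-closures still lie inside cover elements: for each $x\in X$ fix $\alpha \left( x\right) \in \Delta $ with $x\in V_{\alpha \left( x\right) }$, and then choose an $i$-open set $U_{x}$ with $x\in U_{x}\subseteq j\text{-}\limfunc{cl}\left( U_{x}\right) \subseteq V_{\alpha \left( x\right) }$. The family $\left\{ U_{x}:x\in X\right\} $ is again an $i$-open cover of $X$, so the $ij$-almost Lindel\"{o}f property provides a countable set $\left\{ x_{n}:n\in \mathbb{N}\right\} $ with $X=\dbigcup\limits_{n\in \mathbb{N}}j\text{-}\limfunc{cl}\left( U_{x_{n}}\right) $. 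Since $j\text{-}\limfunc{cl}\left( U_{x_{n}}\right) \subseteq V_{\alpha \left( x_{n}\right) }$ for each $n$, the countable subfamily $\left\{ V_{\alpha \left( x_{n}\right) }:n\in \mathbb{N}\right\} $ already covers $X$, and therefore $X$ is $i$-Lindel\"{o}f.

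I do not anticipate a genuine obstacle; the one substantive point is the recognition that $ij$-regularity is precisely what allows each selected cover member $V_{\alpha \left( x\right) }$ to be replaced by a smaller $i$-open set $U_{x}$ whose $j$-closure remains inside a cover member, after which the $ij$-almost Lindel\"{o}f hypothesis automatically upgrades the refined countable subcover into an honest countable subcover of the original cover. The pairwise statement then follows at once by applying the $ij$- and $ji$-versions.
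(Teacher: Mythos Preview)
Your argument is correct and is the standard proof of this fact. Note, however, that the paper does not actually prove this lemma: it is stated with a citation to \cite{AdemBidi} and invoked as a black box, so there is no ``paper's own proof'' to compare against. Your two-direction argument---the trivial forward implication and the use of $ij$-regularity to shrink each cover member to an $i$-open set whose $j$-closure stays inside it---is exactly what one expects the cited proof to contain.
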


\begin{proposition}
An $ij$-weakly Lindel\"{o}f, $ij$-regular and $ij$-weak $P$-space is $i$%
-Lindel\"{o}f.
\end{proposition}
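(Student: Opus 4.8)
The plan is to chain together the two machinery results already established, exactly in the spirit of the preceding proposition. First I would invoke Proposition 5: since $X$ is assumed to be an $ij$-weak $P$-space, on $X$ the $ij$-almost Lindel\"{o}f property and the $ij$-weakly Lindel\"{o}f property coincide. Hence from the hypothesis that $X$ is $ij$-weakly Lindel\"{o}f I conclude that $X$ is $ij$-almost Lindel\"{o}f.

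Next I would apply Lemma 2: on an $ij$-regular space the $ij$-almost Lindel\"{o}f property is equivalent to $i$-Lindel\"{o}fness. Since $X$ is $ij$-regular by hypothesis and $ij$-almost Lindel\"{o}f by the previous step, it follows that $X$ is $i$-Lindel\"{o}f, which is the assertion. A pairwise corollary then follows at once, as with the earlier results.

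There is essentially no obstacle here: the statement is a formal consequence of Proposition 5 and Lemma 2, and the only thing to verify is that the hypotheses line up, namely that the $ij$-weak $P$-space assumption feeds Proposition 5 and the $ij$-regular assumption feeds Lemma 2, which they do. If one preferred not to cite Proposition 5 as a black box, one could argue directly: given an $i$-open cover $\left\{ U_{\alpha }:\alpha \in \Delta \right\} $ of $X$, $ij$-weak Lindel\"{o}fness produces a countable subfamily with $X=j$-$\limfunc{cl}\left( \dbigcup\limits_{n}U_{\alpha _{n}}\right) $; the $ij$-weak $P$-space property rewrites this as $X=\dbigcup\limits_{n}j$-$\limfunc{cl}\left( U_{\alpha _{n}}\right) $, which is precisely $ij$-almost Lindel\"{o}fness; then $ij$-regularity, via the standard shrinking argument underlying Lemma 2, upgrades this to a genuine countable subcover. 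Either route gives the result with no real difficulty.
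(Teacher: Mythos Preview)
Your proposal is correct and matches the paper's own proof, which simply states that the result is a direct consequence of Proposition 5 and Lemma 2. Your additional unpacking of the direct argument is fine but unnecessary for the paper's purposes.
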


\begin{proof}
This is a direct consequence of Proposition 5 and Lemma 2.
\end{proof}

\begin{corollary}
A pairwise weakly Lindel\"{o}f, pairwise regular and pairwise weak $P$-space
is Lindel\"{o}f.
\end{corollary}

\begin{definition}[see \protect\cite{CammaSan96}, \protect\cite{Eng77}]
Let $X$ be a space. A cover $\mathcal{V}=\left\{ V_{j}:j\in J\right\} $ of $%
X $ is a refinement of another cover $\mathcal{U}=\left\{ U_{\alpha }:\alpha
\in \Delta \right\} $ if for each $j\in J$, there exists an $\alpha \left(
j\right) \in \Delta $ such that $V_{j}\subseteq U_{\alpha \left( j\right) }$%
, i.e., each $V\in \mathcal{V}$ is contained in some $U\in \mathcal{U}$.
\end{definition}

\begin{definition}[see \protect\cite{CammaSan96}, \protect\cite{Eng77}]
A family $\mathcal{U}=\left\{ U_{\alpha }:\alpha \in \Delta \right\} $ of
subsets of a space $X$ is locally finite if for every point $x\in X$, there
exists a neighbourhood $U_{x}$ of $x$ such that the set $\left\{ \alpha \in
\Delta :U_{x}\cap U_{\alpha }\neq \emptyset \right\} $ is finite, i.e., each
$x\in X$ has a neighbourhood $U_{x}$ meeting only finitely many $U\in
\mathcal{U}$.
\end{definition}

\begin{definition}
A bitopological space $X$ is said to be $ij$-nearly paracompact if every
cover of $X$ by $ij$-regular open sets admits a locally finite refinement.
\end{definition}

\begin{proposition}
An $ij$-weakly Lindel\"{o}f, $ij$-semiregular and $ij$-nearly paracompact
bitopological space $X$ is $ij$-almost Lindel\"{o}f.
\end{proposition}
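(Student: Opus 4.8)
The plan is to take an arbitrary $i$-open cover $\mathcal{U}=\{U_{\alpha}:\alpha\in\Delta\}$ of $X$, use $ij$-semiregularity to refine it to an $ij$-regular open cover, use $ij$-nearly paracompactness to refine that further to a locally finite $i$-open cover, then apply $ij$-weak Lindel\"{o}fness to the latter and finish with the standard fact that the $j$-closure of the union of a locally finite family equals the union of the $j$-closures of its members.

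For the first refinement I would fix, for each $x\in X$, an index $\alpha(x)$ with $x\in U_{\alpha(x)}$, and then use $ij$-semiregularity to pick an $i$-open $W_{x}$ with $x\in W_{x}\subseteq i\text{-}\limfunc{int}(j\text{-}\limfunc{cl}(W_{x}))\subseteq U_{\alpha(x)}$. Setting $G_{x}=i\text{-}\limfunc{int}(j\text{-}\limfunc{cl}(W_{x}))$, one has $W_{x}\subseteq G_{x}\subseteq j\text{-}\limfunc{cl}(W_{x})$, hence $j\text{-}\limfunc{cl}(G_{x})=j\text{-}\limfunc{cl}(W_{x})$ and therefore $i\text{-}\limfunc{int}(j\text{-}\limfunc{cl}(G_{x}))=G_{x}$; so each $G_{x}$ is $ij$-regular open, and $x\in G_{x}\subseteq U_{\alpha(x)}$. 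Thus $\{G_{x}:x\in X\}$ is an $ij$-regular open cover of $X$ refining $\mathcal{U}$. Applying $ij$-nearly paracompactness to it, I would obtain a locally finite ($i$-open) refinement $\mathcal{V}=\{V_{\gamma}:\gamma\in C\}$; picking for each $\gamma$ some $x(\gamma)$ with $V_{\gamma}\subseteq G_{x(\gamma)}\subseteq U_{\alpha(x(\gamma))}$ shows $\mathcal{V}$ refines $\mathcal{U}$ as well.

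Then $\mathcal{V}$ is an $i$-open cover of $X$, so by $ij$-weak Lindel\"{o}fness there is a countable subfamily $\{V_{\gamma_{n}}:n\in\mathbb{N}\}$ with $X=j\text{-}\limfunc{cl}(\bigcup_{n\in\mathbb{N}}V_{\gamma_{n}})$. Being a subfamily of the locally finite family $\mathcal{V}$, this family is itself locally finite, and hence $j\text{-}\limfunc{cl}(\bigcup_{n\in\mathbb{N}}V_{\gamma_{n}})=\bigcup_{n\in\mathbb{N}}j\text{-}\limfunc{cl}(V_{\gamma_{n}})$ by the usual argument: a point of the left-hand side has a neighbourhood meeting only finitely many $V_{\gamma_{n}}$, so inside a $j$-open neighbourhood it belongs to the $j$-closure of a finite subunion, and the $j$-closure of a finite union is the union of the individual $j$-closures. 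Therefore
\begin{equation*}
X=\bigcup_{n\in\mathbb{N}}j\text{-}\limfunc{cl}(V_{\gamma_{n}})\subseteq\bigcup_{n\in\mathbb{N}}j\text{-}\limfunc{cl}(U_{\alpha(x(\gamma_{n}))}),
\end{equation*}
so the countable subfamily $\{U_{\alpha(x(\gamma_{n}))}:n\in\mathbb{N}\}$ of $\mathcal{U}$ satisfies $X=\bigcup_{n\in\mathbb{N}}j\text{-}\limfunc{cl}(U_{\alpha(x(\gamma_{n}))})$, that is, $X$ is $ij$-almost Lindel\"{o}f.

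I expect the main obstacle to be pinning down the bitopological conventions so that the two facts invoked are genuinely available: that the locally finite refinement produced by $ij$-nearly paracompactness may be taken to consist of $i$-open sets (so that $ij$-weak Lindel\"{o}fness applies to $\mathcal{V}$), and that its local finiteness is witnessed by neighbourhoods containing $j$-open neighbourhoods of the point (which is what makes $j\text{-}\limfunc{cl}(\bigcup_{n\in\mathbb{N}}V_{\gamma_{n}})=\bigcup_{n\in\mathbb{N}}j\text{-}\limfunc{cl}(V_{\gamma_{n}})$ go through). Granting these — which are the natural readings of the paper's definitions — the argument is just the bitopological transcription of the classical proof, and everything else is routine manipulation of interiors and closures.
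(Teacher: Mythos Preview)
Your argument is correct and follows essentially the same route as the paper: pass to a locally finite refinement via $ij$-nearly paracompactness, extract a countable subfamily by $ij$-weak Lindel\"{o}fness, and then use local finiteness to turn $j\text{-}\limfunc{cl}\bigl(\bigcup_{n}V_{\gamma_n}\bigr)$ into $\bigcup_{n}j\text{-}\limfunc{cl}(V_{\gamma_n})$ before pushing back up to the original cover.

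The only organisational difference is in where $ij$-semiregularity is spent. The paper starts directly with an $ij$-regular open cover (so that $ij$-nearly paracompactness applies immediately) and then invokes Proposition~3 at the end to conclude $ij$-almost Lindel\"{o}fness from the $ij$-regular-open characterization; you instead start with an arbitrary $i$-open cover and use $ij$-semiregularity up front to manufacture an $ij$-regular open refinement $\{G_x\}$, then conclude $ij$-almost Lindel\"{o}fness directly from its definition. Your version is a bit more self-contained (and your verification that $G_x=i\text{-}\limfunc{int}(j\text{-}\limfunc{cl}(W_x))$ is $ij$-regular open is clean), while the paper's version is shorter. The two caveats you flag --- that the locally finite refinement should consist of $i$-open sets so that weak Lindel\"{o}fness applies, and that local finiteness should be witnessed by $j$-neighbourhoods so that the closure distributes --- are exactly the implicit assumptions in the paper's proof as well (it simply cites \cite{Eng77} for the closure identity), so you have not introduced any gap that is not already present there.
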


\begin{proof}
Let $\left\{ U_{\alpha }:\alpha \in \Delta \right\} $ be an $ij$-regular
open cover of $X$. Since $X$ is $ij$-nearly paracompact, this cover admits a
locally finite refinement $\left\{ V_{\lambda }:\lambda \in \Lambda \right\}
$. Since $X$ is $ij$-weakly Lindel\"{o}f, there exists a countable subfamily
$\left\{ V_{\lambda _{n}}:n\in
%TCIMACRO{\U{2115} }%
%BeginExpansion
\mathbb{N}
%EndExpansion
\right\} $ such that $X=j$-$\limfunc{cl}\left( \dbigcup\limits_{n\in
%TCIMACRO{\U{2115} }%
%BeginExpansion
\mathbb{N}
%EndExpansion
}V_{\lambda _{n}}\right) $. Since the family $\left\{ V_{\lambda _{n}}:n\in
%TCIMACRO{\U{2115} }%
%BeginExpansion
\mathbb{N}
%EndExpansion
\right\} $ is locally finite, then $j$-$\limfunc{cl}\left(
\dbigcup\limits_{n\in
%TCIMACRO{\U{2115} }%
%BeginExpansion
\mathbb{N}
%EndExpansion
}V_{\lambda _{n}}\right) =\dbigcup\limits_{n\in
%TCIMACRO{\U{2115} }%
%BeginExpansion
\mathbb{N}
%EndExpansion
}j$-$\limfunc{cl}\left( V_{\lambda _{n}}\right) $ (see \cite{Eng77}).
Choosing, for $n\in
%TCIMACRO{\U{2115} }%
%BeginExpansion
\mathbb{N}
%EndExpansion
$, $\alpha _{n}\in \Delta $ such that $V_{\lambda _{n}}\subseteq U_{\alpha
_{n}}$, we obtain $X=\dbigcup\limits_{n\in
%TCIMACRO{\U{2115} }%
%BeginExpansion
\mathbb{N}
%EndExpansion
}j$-$\limfunc{cl}\left( V_{\lambda _{n}}\right) =\dbigcup\limits_{n\in
%TCIMACRO{\U{2115} }%
%BeginExpansion
\mathbb{N}
%EndExpansion
}j$-$\limfunc{cl}\left( U_{\alpha _{n}}\right) $. By Proposition 3, $X$ is $%
ij$-almost Lindel\"{o}f.
\end{proof}

\begin{corollary}
A pairwise weakly Lindel\"{o}f, pairwise semiregular and pairwise nearly
paracompact bitopological space $X$ is pairwise almost Lindel\"{o}f.
\end{corollary}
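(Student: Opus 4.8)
The plan is to obtain the pairwise statement from the preceding Proposition by applying it twice, once for each ordering of the two topologies. Recall the convention fixed in the Preliminaries: a bitopological space possesses a given \emph{pairwise} property exactly when it possesses both the corresponding $ij$- and $ji$- properties, with $\left\{ i,j\right\} =\left\{ 1,2\right\} $. Accordingly, I would first unwind the three hypotheses. Pairwise weakly Lindel\"{o}f yields that $X$ is both $ij$-weakly Lindel\"{o}f and $ji$-weakly Lindel\"{o}f; pairwise semiregular yields that $X$ is both $ij$-semiregular and $ji$-semiregular; and pairwise nearly paracompact yields that $X$ is both $ij$-nearly paracompact and $ji$-nearly paracompact.

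Next, I would invoke the Proposition with the labels $(i,j)$ in their given roles: from $X$ being $ij$-weakly Lindel\"{o}f, $ij$-semiregular and $ij$-nearly paracompact it follows that $X$ is $ij$-almost Lindel\"{o}f. I would then invoke the very same Proposition after interchanging the roles of the two topologies, that is, with $(i,j)$ replaced by $(j,i)$: from $X$ being $ji$-weakly Lindel\"{o}f, $ji$-semiregular and $ji$-nearly paracompact it follows that $X$ is $ji$-almost Lindel\"{o}f. This second application is legitimate because the Proposition is stated for arbitrary $i\neq j$ in $\left\{ 1,2\right\} $, so it applies symmetrically under the swap.

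Finally, combining the two conclusions --- that $X$ is $ij$-almost Lindel\"{o}f and that $X$ is $ji$-almost Lindel\"{o}f --- and applying the definition of the pairwise property once more, I would conclude that $X$ is pairwise almost Lindel\"{o}f, which finishes the proof. There is essentially no technical obstacle: the entire content is bookkeeping with the $ij$/$ji$ conventions, and the only point requiring care is to check that each pairwise hypothesis genuinely supplies both directional versions needed to feed the Proposition in both of its applications.
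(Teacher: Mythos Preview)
Your proposal is correct and is exactly the approach the paper intends: as stated in the Preliminaries, every $ij$-result in the paper yields its pairwise version as an immediate corollary, so the corollary follows by applying the preceding Proposition once with $(i,j)=(1,2)$ and once with $(i,j)=(2,1)$ and then invoking the definition of the pairwise property. The paper gives no separate proof for this corollary, and your bookkeeping with the $ij$/$ji$ conventions is precisely what is required.
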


\begin{proposition}
An $ij$-weakly Lindel\"{o}f, $ij$-regular and $ij$-nearly paracompact
bitopological space $X$ is $i$-Lindel\"{o}f.
\end{proposition}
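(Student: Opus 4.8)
The plan is to obtain this as a short composition of results already established, the bridging fact being that $ij$-regularity implies $ij$-semiregularity. First I would record that implication: given $x\in X$ and an $i$-open set $V$ containing $x$, $ij$-regularity furnishes an $i$-open set $U$ with $x\in U\subseteq j\text{-}\limfunc{cl}\left( U\right) \subseteq V$; since $U$ is an $i$-open subset of $j\text{-}\limfunc{cl}\left( U\right) $ we have $U\subseteq i\text{-}\limfunc{int}\left( j\text{-}\limfunc{cl}\left( U\right) \right) \subseteq j\text{-}\limfunc{cl}\left( U\right) \subseteq V$, so the very same $U$ witnesses that $X$ is $ij$-semiregular.

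With this in hand, $X$ is simultaneously $ij$-weakly Lindel\"{o}f, $ij$-semiregular and $ij$-nearly paracompact, so the preceding Proposition applies and shows that $X$ is $ij$-almost Lindel\"{o}f. Finally, $X$ is $ij$-regular and $ij$-almost Lindel\"{o}f, so Lemma 2 yields that $X$ is $i$-Lindel\"{o}f, as desired; the pairwise statement then follows as the corresponding Corollary.

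I do not anticipate a genuine obstacle here: the argument is essentially ``the preceding Proposition plus Lemma 2,'' exactly parallel to the way Proposition 7 was deduced from Proposition 5 and Lemma 2. The only point needing a line of justification is that $ij$-regularity is the stronger of the two separation-type hypotheses in play, i.e. that it entails $ij$-semiregularity, and this is immediate from the definitions once one observes that an $i$-open set sitting inside $j\text{-}\limfunc{cl}\left( U\right) $ automatically sits inside $i\text{-}\limfunc{int}\left( j\text{-}\limfunc{cl}\left( U\right) \right) $. Should a self-contained proof be preferred, one could instead re-run the argument of the preceding Proposition directly: refine a given $i$-open cover of $X$ to an $ij$-regular open cover using $ij$-regularity, apply $ij$-near paracompactness to get a locally finite refinement, extract a countable subfamily with $j$-dense union by $ij$-weak Lindel\"{o}fness, commute $j\text{-}\limfunc{cl}$ with the countable union via local finiteness, and conclude $i$-Lindel\"{o}fness; but routing through Lemma 2 is the shorter route.
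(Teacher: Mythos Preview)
Your proposal is correct and follows essentially the same route as the paper: the paper's proof simply notes that $ij$-regularity implies $ij$-semiregularity, invokes Proposition~8 to obtain $ij$-almost Lindel\"{o}fness, and then applies Lemma~2 to conclude $i$-Lindel\"{o}fness. Your write-up is in fact slightly more detailed, since you supply the one-line justification for the implication $ij$-regular $\Rightarrow$ $ij$-semiregular that the paper merely asserts.
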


\begin{proof}
Let $\left\{ U_{\alpha }:\alpha \in \Delta \right\} $ be an $i$-open cover
of $X$. Since $X$ is $ij$-regular, then it is $ij$-semiregular. So $X$ is $%
ij $-almost Lindel\"{o}f by Proposition 8. Since $X$ is $ij$-regular, then $X
$ is $i$-Lindel\"{o}f by Lemma 2. \ \ \ \ \ \ \ \ \ \ \ \
\end{proof}

\begin{corollary}
A pairwise weakly Lindel\"{o}f, pairwise regular and pairwise nearly
paracompact bitopological space $X$ is Lindel\"{o}f.
\end{corollary}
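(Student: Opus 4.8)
The plan is to obtain this corollary as the ``pairwise'' version of the immediately preceding proposition, following the systematic pattern announced in Section~2 (``every result in terms of $ij$- will have pairwise as a corollary''). Concretely, I would simply unwind the hypotheses and apply the indexed proposition twice, once for each ordering of the two indices.

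First I would recall what the ``pairwise'' adjectives mean: by the relevant definitions, a pairwise weakly Lindel\"{o}f space is both $12$-weakly Lindel\"{o}f and $21$-weakly Lindel\"{o}f, a pairwise regular space is both $12$-regular and $21$-regular, and (extending the definition of $ij$-nearly paracompactness in the obvious way) a pairwise nearly paracompact space is both $12$-nearly paracompact and $21$-nearly paracompact. I would also recall, from the definition of a Lindel\"{o}f bitopological space, that $X$ is Lindel\"{o}f precisely when $X$ is $i$-Lindel\"{o}f for each $i=1,2$.

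Then I would apply the previous proposition with $(i,j)=(1,2)$: since $X$ is $12$-weakly Lindel\"{o}f, $12$-regular and $12$-nearly paracompact, the proposition gives that $X$ is $1$-Lindel\"{o}f. Symmetrically, applying the same proposition with $(i,j)=(2,1)$ yields that $X$ is $2$-Lindel\"{o}f. Combining the two conclusions, $X$ is $i$-Lindel\"{o}f for each $i=1,2$, hence Lindel\"{o}f, which is the assertion.

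I do not expect any genuine obstacle here; the proof is a routine double invocation of the indexed result. The only point requiring a word of care is the bookkeeping: one should make explicit that ``pairwise nearly paracompact'' is to be read as the conjunction of the $12$- and $21$-versions, exactly as the other pairwise notions were defined, so that the hypotheses of the indexed proposition are literally met in both index orderings.
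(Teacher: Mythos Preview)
Your proposal is correct and matches the paper's intended approach: the corollary is simply the pairwise version of the preceding $ij$-proposition, obtained by applying it with $(i,j)=(1,2)$ and $(i,j)=(2,1)$ and then combining the two conclusions via the definition of Lindel\"{o}f. The paper itself gives no separate proof for this corollary, relying on the convention announced in Section~2 that every $ij$-result has a pairwise corollary; your remark that ``pairwise nearly paracompact'' must be read as the conjunction of the $12$- and $21$-versions is appropriate, since the paper does not spell that out explicitly.
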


\section{\textbf{Pairwise Weakly Lindel\"{o}f Subspaces and Subsets}}

\noindent A subset $S$ of a bitopological space $X$ is said to be $ij$-weakly Lindel%
\"{o}f (resp. pairwise weakly Lindel\"{o}f) if $S$ is $ij$-weakly Lindel\"{o}%
f (resp. pairwise weakly Lindel\"{o}f) as a subspace of $X$, i.e., $S$ is $%
ij $-weakly Lindel\"{o}f (resp. pairwise weakly Lindel\"{o}f) with respect
to the inducted bitopology from the bitopology of $X$.

\begin{definition}
A subset $S$ of a bitopological space $X$ is said to be $ij$-weakly Lindel%
\"{o}f relative to $X$ if for every cover $\left\{ U_{\alpha }:\alpha \in
\Delta \right\} $ of $S$ by $i$-open subsets of $X$ such that $S\subseteq
\dbigcup\limits_{\alpha \in \Delta }U_{\alpha }$, there exists a countable
subset $\left\{ \alpha _{n}:n\in
%TCIMACRO{\U{2115} }%
%BeginExpansion
\mathbb{N}
%EndExpansion
\right\} $ of $\Delta $ such that $S\subseteq j$-$\limfunc{cl}\left(
\dbigcup\limits_{n\in
%TCIMACRO{\U{2115} }%
%BeginExpansion
\mathbb{N}
%EndExpansion
}U_{\alpha _{n}}\right) $. $S$ is said pairwise weakly Lindel\"{o}f relative
to $X$ if it is both $ij$-weakly Lindel\"{o}f relative to $X$ and $ji$%
-weakly Lindel\"{o}f relative to $X$.
\end{definition}

\begin{proposition}
Let $S$ be a subset of a bitopological space $X$. Then $S$ is $ij$-weakly
Lindel\"{o}f relative to $X$ if and only if for every family $\left\{
C_{\alpha }:\alpha \in \Delta \right\} $ of $i$-closed subsets of $X$ such
that $\left( \dbigcap\limits_{\alpha \in \Delta }C_{\alpha }\right) \cap
S=\emptyset $, there exists a countable subfamily $\left\{ C_{\alpha
_{n}}:n\in
%TCIMACRO{\U{2115} }%
%BeginExpansion
\mathbb{N}
%EndExpansion
\right\} $ such that $j$-$\limfunc{int}\left( \dbigcap\limits_{n\in
%TCIMACRO{\U{2115} }%
%BeginExpansion
\mathbb{N}
%EndExpansion
}C_{\alpha _{n}}\right) \cap S=\emptyset $.
\end{proposition}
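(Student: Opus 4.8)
The plan is to mimic the proof of Proposition 1, replacing statements about covers of $X$ by covers of $S$ and inserting an intersection with $S$ at the appropriate places. The whole argument rests on the elementary set-theoretic identities $X\setminus j\text{-}\limfunc{cl}(A)=j\text{-}\limfunc{int}(X\setminus A)$ and De Morgan's laws, together with the observation that $\left(\bigcap_{\alpha\in\Delta}C_\alpha\right)\cap S=\emptyset$ is equivalent to $S\subseteq\bigcup_{\alpha\in\Delta}(X\setminus C_\alpha)$.

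First I would prove the forward implication. Assume $S$ is $ij$-weakly Lindel\"{o}f relative to $X$ and let $\left\{C_\alpha:\alpha\in\Delta\right\}$ be a family of $i$-closed subsets of $X$ with $\left(\bigcap_{\alpha\in\Delta}C_\alpha\right)\cap S=\emptyset$. Put $U_\alpha=X\setminus C_\alpha$; each $U_\alpha$ is $i$-open, and the hypothesis on the intersection gives $S\subseteq X\setminus\bigcap_{\alpha\in\Delta}C_\alpha=\bigcup_{\alpha\in\Delta}U_\alpha$, so $\left\{U_\alpha:\alpha\in\Delta\right\}$ is a cover of $S$ by $i$-open subsets of $X$. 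By the definition of $ij$-weakly Lindel\"{o}f relative to $X$ there is a countable $\left\{\alpha_n:n\in\mathbb{N}\right\}\subseteq\Delta$ with $S\subseteq j\text{-}\limfunc{cl}\left(\bigcup_{n\in\mathbb{N}}U_{\alpha_n}\right)$. Taking complements and using the identities above, $S\cap j\text{-}\limfunc{int}\left(X\setminus\bigcup_{n\in\mathbb{N}}U_{\alpha_n}\right)=S\cap j\text{-}\limfunc{int}\left(\bigcap_{n\in\mathbb{N}}C_{\alpha_n}\right)=\emptyset$, which is the required conclusion.

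Next the converse. Suppose the closed-set condition holds and let $\left\{U_\alpha:\alpha\in\Delta\right\}$ be a cover of $S$ by $i$-open subsets of $X$. Set $C_\alpha=X\setminus U_\alpha$; these are $i$-closed, and $S\subseteq\bigcup_{\alpha\in\Delta}U_\alpha$ forces $\left(\bigcap_{\alpha\in\Delta}C_\alpha\right)\cap S=\emptyset$. By hypothesis there is a countable subfamily with $j\text{-}\limfunc{int}\left(\bigcap_{n\in\mathbb{N}}C_{\alpha_n}\right)\cap S=\emptyset$; rewriting $\bigcap_{n\in\mathbb{N}}C_{\alpha_n}=X\setminus\bigcup_{n\in\mathbb{N}}U_{\alpha_n}$ and using $j\text{-}\limfunc{int}(X\setminus A)=X\setminus j\text{-}\limfunc{cl}(A)$ we get $\left(X\setminus j\text{-}\limfunc{cl}\left(\bigcup_{n\in\mathbb{N}}U_{\alpha_n}\right)\right)\cap S=\emptyset$, i.e. $S\subseteq j\text{-}\limfunc{cl}\left(\bigcup_{n\in\mathbb{N}}U_{\alpha_n}\right)$. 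Hence $S$ is $ij$-weakly Lindel\"{o}f relative to $X$.

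I do not anticipate a genuine obstacle: the proof is a routine dualization of Proposition 1, and the only thing demanding care is bookkeeping, namely making sure the set $S$ is carried through every complementation step and that the interior and closure pass through the complement on the correct topology $\tau_j$. If one preferred, a contrapositive version could be given exactly as in the \emph{Second Method} of the proof of Proposition 3, starting from the assumption that $S$ is not $ij$-weakly Lindel\"{o}f relative to $X$ and producing a bad family of $i$-closed sets; but the direct complement argument above is cleaner, and the pairwise statement then follows at once as a corollary.
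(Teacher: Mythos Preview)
Your proposal is correct and follows essentially the same approach as the paper's own proof: both directions are handled by passing to complements, invoking De~Morgan's laws and the identity $X\setminus j\text{-}\limfunc{cl}(A)=j\text{-}\limfunc{int}(X\setminus A)$, with $S$ carried through each step. The paper's argument is line-for-line the same as yours, so there is nothing to add.
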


\begin{proof}
Let $\left\{ C_{\alpha }:\alpha \in \Delta \right\} $ is a family of $i$%
-closed subsets of $X$ such that $\left( \dbigcap\limits_{\alpha \in \Delta
}C_{\alpha }\right) \cap S=\emptyset $. Then $S\subseteq X\setminus \left(
\dbigcap\limits_{\alpha \in \Delta }C_{\alpha }\right)
=\dbigcup\limits_{\alpha \in \Delta }\left( X\setminus C_{\alpha }\right) $,
so $\left\{ X\setminus C_{\alpha }:\alpha \in \Delta \right\} $ forms a
family of $i$-open subsets of $X$ covering $S$. By hypothesis, there exists
a countable subfamily $\left\{ X\setminus C_{\alpha _{n}}:n\in
%TCIMACRO{\U{2115} }%
%BeginExpansion
\mathbb{N}
%EndExpansion
\right\} $ such that $S\subseteq j$-$\limfunc{cl}\left(
\dbigcup\limits_{n\in
%TCIMACRO{\U{2115} }%
%BeginExpansion
\mathbb{N}
%EndExpansion
}\left( X\setminus C_{\alpha _{n}}\right) \right) $. Hence $\left(
X\setminus j\text{-}\limfunc{cl}\left( \dbigcup\limits_{n\in
%TCIMACRO{\U{2115} }%
%BeginExpansion
\mathbb{N}
%EndExpansion
}\left( X\setminus C_{\alpha _{n}}\right) \right) \right) \cap S=\emptyset $%
, i.e., $j$-$\limfunc{int}\left( X\setminus \dbigcup\limits_{n\in
%TCIMACRO{\U{2115} }%
%BeginExpansion
\mathbb{N}
%EndExpansion
}\left( X\setminus C_{\alpha _{n}}\right) \right) \cap S=\emptyset $. Thus $%
j $-$\limfunc{int}\left( \dbigcap\limits_{n\in
%TCIMACRO{\U{2115} }%
%BeginExpansion
\mathbb{N}
%EndExpansion
}C_{\alpha _{n}}\right) \cap S=\emptyset $. Conversely, let $\left\{
U_{\alpha }:\alpha \in \Delta \right\} $ be a family of $i$-open subsets in $%
X$ such that $S\subseteq \dbigcup\limits_{\alpha \in \Delta }U_{\alpha }$.
Then $\left( X\setminus \dbigcup\limits_{\alpha \in \Delta }U_{\alpha
}\right) \cap S=\emptyset $, i.e., $\left( \dbigcap\limits_{\alpha \in
\Delta }X\setminus U_{\alpha }\right) \cap S=\emptyset $. Since $\left\{
X\setminus U_{\alpha }:\alpha \in \Delta \right\} $ be a family of $i$%
-closed subsets of $X$, by hypothesis there exists a countable subfamily $%
\left\{ X\setminus U_{\alpha _{n}}:n\in
%TCIMACRO{\U{2115} }%
%BeginExpansion
\mathbb{N}
%EndExpansion
\right\} $ such that $j$-$\limfunc{int}\left( \dbigcap\limits_{n\in
%TCIMACRO{\U{2115} }%
%BeginExpansion
\mathbb{N}
%EndExpansion
}\left( X\setminus U_{\alpha _{n}}\right) \right) \cap S=\emptyset $.
Therefore $S\subseteq X\setminus j$-$\limfunc{int}\left(
\dbigcap\limits_{n\in
%TCIMACRO{\U{2115} }%
%BeginExpansion
\mathbb{N}
%EndExpansion
}\left( X\setminus U_{\alpha _{n}}\right) \right) =j$-$\limfunc{cl}\left(
X\setminus \left( \dbigcap\limits_{n\in
%TCIMACRO{\U{2115} }%
%BeginExpansion
\mathbb{N}
%EndExpansion
}\left( X\setminus U_{\alpha _{n}}\right) \right) \right) =j$-$\limfunc{cl}%
\left( \dbigcup\limits_{n\in
%TCIMACRO{\U{2115} }%
%BeginExpansion
\mathbb{N}
%EndExpansion
}U_{\alpha _{n}}\right) $. This completes the proof.
\end{proof}

\begin{corollary}
Let $S$ be a subset of a bitopological space $X$. Then $S$ is pairwise
weakly Lindel\"{o}f relative to $X$ if and only if for every family $\left\{
C_{\alpha }:\alpha \in \Delta \right\} $ of closed subsets of $X$ such that $%
\left( \dbigcap\limits_{\alpha \in \Delta }C_{\alpha }\right) \cap
S=\emptyset $, there exists a countable subfamily $\left\{ C_{\alpha
_{n}}:n\in
%TCIMACRO{\U{2115} }%
%BeginExpansion
\mathbb{N}
%EndExpansion
\right\} $ such that $\limfunc{int}\left( \dbigcap\limits_{n\in
%TCIMACRO{\U{2115} }%
%BeginExpansion
\mathbb{N}
%EndExpansion
}C_{\alpha _{n}}\right) \cap S=\emptyset $.
\end{corollary}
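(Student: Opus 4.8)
The plan is to obtain this corollary from Proposition 10 by instantiating it at the two pairs $(i,j)=(1,2)$ and $(i,j)=(2,1)$, exactly as Corollary 1 was obtained from Proposition 1. Recall the relevant definitions: ``$S$ is pairwise weakly Lindel\"{o}f relative to $X$'' means that $S$ is simultaneously $12$-weakly Lindel\"{o}f relative to $X$ and $21$-weakly Lindel\"{o}f relative to $X$; a ``closed'' subset of $X$ is (Definition 7) one that is both $1$-closed and $2$-closed; and the unsubscripted $\limfunc{int}$ records the assertion ``$i$-$\limfunc{int}(\cdot)\cap S=\emptyset$ for each $i=1,2$.''

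First I would prove the forward implication. Assume $S$ is pairwise weakly Lindel\"{o}f relative to $X$ and let $\{C_{\alpha}:\alpha\in\Delta\}$ be a family of closed subsets of $X$ with $\left(\bigcap_{\alpha\in\Delta}C_{\alpha}\right)\cap S=\emptyset$. Viewing it as a family of $2$-closed sets and applying Proposition 10 with $(i,j)=(2,1)$ yields a countable subfamily $\{C_{\alpha_{n}}:n\in\mathbb{N}\}$ with $1$-$\limfunc{int}\!\left(\bigcap_{n}C_{\alpha_{n}}\right)\cap S=\emptyset$; viewing the same family as a family of $1$-closed sets and applying Proposition 10 with $(i,j)=(1,2)$ yields a countable subfamily $\{C_{\beta_{m}}:m\in\mathbb{N}\}$ with $2$-$\limfunc{int}\!\left(\bigcap_{m}C_{\beta_{m}}\right)\cap S=\emptyset$. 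The crux is to reconcile these two \emph{a priori} different subfamilies: take $\{C_{\gamma_{k}}:k\in\mathbb{N}\}$ to be their union, which is again countable and satisfies $\bigcap_{k}C_{\gamma_{k}}\subseteq\bigcap_{n}C_{\alpha_{n}}$ and $\bigcap_{k}C_{\gamma_{k}}\subseteq\bigcap_{m}C_{\beta_{m}}$. Monotonicity of the interior operators then forces both $1$-$\limfunc{int}\!\left(\bigcap_{k}C_{\gamma_{k}}\right)\cap S=\emptyset$ and $2$-$\limfunc{int}\!\left(\bigcap_{k}C_{\gamma_{k}}\right)\cap S=\emptyset$, i.e. $\limfunc{int}\!\left(\bigcap_{k}C_{\gamma_{k}}\right)\cap S=\emptyset$, as required.

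For the converse I would replay the converse half of the proof of Proposition 10 coordinatewise: given a family of $i$-open subsets of $X$ covering $S$, pass to complements to get a family of $i$-closed sets whose intersection misses $S$, extract a countable subfamily from the hypothesis, and take complements back to exhibit $S\subseteq j$-$\limfunc{cl}\!\left(\bigcup_{n}U_{\alpha_{n}}\right)$, doing this once for $(i,j)=(1,2)$ and once for $(i,j)=(2,1)$. The one point that is not purely formal --- and the step I expect to be the real obstacle --- is that the hypothesis quantifies only over families of \emph{closed} (that is, $1$- and $2$-closed) sets, whereas the complement of an $i$-open subset of $X$ is in general only $i$-closed; bridging this is precisely what the paper's convention ``every result in terms of $ij$- will have pairwise as a corollary'' is implicitly doing, so the honest write-up will either note this explicitly or, following the terse style of Corollary 1, simply cite Proposition 10 together with Definition 7 and the monotonicity of $\limfunc{int}$. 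Apart from that bookkeeping, the argument is routine.
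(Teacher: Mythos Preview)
The paper provides no proof for this corollary; it is stated immediately after Proposition~10 with no proof environment, relying on the blanket convention announced in Section~2 that ``every result in terms of $ij$- will have pairwise as a corollary'' (exactly as Corollary~1 is dispatched with ``It is obvious by the definition''). Your forward-direction argument --- applying Proposition~10 once for $(i,j)=(1,2)$ and once for $(i,j)=(2,1)$, then merging the two countable subfamilies and invoking monotonicity of the interior operators --- is correct and strictly more careful than anything the paper offers.

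Your instinct about the converse is right, and you should not downgrade it to mere bookkeeping. The stated hypothesis quantifies only over families that are simultaneously $1$-closed and $2$-closed, whereas proving, say, that $S$ is $12$-weakly Lindel\"{o}f relative to $X$ requires handling an arbitrary cover of $S$ by $1$-open subsets of $X$, whose complements are in general only $1$-closed. The paper's convention does not bridge this: that convention lets one pass from an $ij$-conclusion to a pairwise conclusion, not from an $i$-closed hypothesis to a closed one. As literally stated, the converse is therefore not derivable from Proposition~10 alone; the paper is almost certainly reading the corollary loosely as shorthand for ``Proposition~10 holds for $(i,j)=(1,2)$ and for $(i,j)=(2,1)$'' rather than as a self-contained biconditional about closed families. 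Your write-up should flag this explicitly rather than defer to the convention.
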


\begin{proposition}
Let $X$ be a bitopological space and $S\subseteq X$. For the following
conditions

$\left( i\right) $ $S$ is $ij$-weaklyLindel\"{o}f relative to $X;$

$\left( ii\right) $ every family by $ij$-regular open subsets $\left\{
U_{\alpha }:\alpha \in \Delta \right\} $ of $X$ that cover $S$ admits a
countable subfamily $\left\{ U_{\alpha _{n}}:n\in
%TCIMACRO{\U{2115} }%
%BeginExpansion
\mathbb{N}
%EndExpansion
\right\} $ with $j$-dense union in $S;$

$\left( iii\right) $ every family $\left\{ C_{\alpha }:\alpha \in \Delta
\right\} $ of $ij$-regular closed subsets of $X$ such that%
\begin{equation*}
\left( \dbigcap\limits_{\alpha \in \Delta }C_{\alpha }\right) \cap
S=\emptyset
\end{equation*}
admits a countable subfamily $\left\{ C_{\alpha _{n}}:n\in
%TCIMACRO{\U{2115} }%
%BeginExpansion
\mathbb{N}
%EndExpansion
\right\} $ such that $j$-$\limfunc{int}\left( \dbigcap\limits_{n\in
%TCIMACRO{\U{2115} }%
%BeginExpansion
\mathbb{N}
%EndExpansion
}C_{\alpha _{n}}\right) \cap S=\emptyset ;$ we have that $\left(
i\right) \Rightarrow \left( ii\right) \Leftrightarrow \left(
iii\right) $ and if $X$ is $ij$-semiregular, then $\left(
ii\right) \Rightarrow \left( i\right) $.
\end{proposition}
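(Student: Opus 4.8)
The plan is to mirror the proof of the corresponding proposition for the whole space $X$ --- the one characterising $ij$-weak Lindel\"{o}fness via $ij$-regular open covers with $j$-dense union --- carrying every step through with ``$X$'' replaced by ``$S$'' and every relevant equality intersected with $S$, exactly as the relative closed-set characterisation proved just above relates to its whole-space counterpart. For $\left( i\right) \Rightarrow \left( ii\right) $ there is nothing to do beyond observing that an $ij$-regular open subset of $X$ is in particular $i$-open, so a family of $ij$-regular open subsets of $X$ covering $S$ is an $i$-open cover of $S$ by subsets of $X$; the definition of $S$ being $ij$-weakly Lindel\"{o}f relative to $X$ then returns a countable subfamily $\left\{ U_{\alpha _{n}}:n\in\mathbb{N}\right\} $ with $S\subseteq j$-$\limfunc{cl}\left( \bigcup_{n\in\mathbb{N}}U_{\alpha _{n}}\right) $, which is precisely $j$-dense union in $S$. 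No semiregularity enters here.

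For $\left( ii\right) \Leftrightarrow \left( iii\right) $ I would use the duality that the complement of an $ij$-regular closed set is $ij$-regular open and conversely: from $i$-$\limfunc{cl}\left( j\text{-}\limfunc{int}\left( C\right) \right) =C$ one gets $X\setminus C=i$-$\limfunc{int}\left( j\text{-}\limfunc{cl}\left( X\setminus C\right) \right) $, and symmetrically. Given $\left\{ C_{\alpha }:\alpha \in \Delta \right\} $ of $ij$-regular closed sets with $\left( \bigcap_{\alpha \in \Delta }C_{\alpha }\right) \cap S=\emptyset $, the sets $X\setminus C_{\alpha }$ form an $ij$-regular open family covering $S$; feeding this into $\left( ii\right) $ and taking complements of $S\subseteq j$-$\limfunc{cl}\left( \bigcup_{n\in\mathbb{N}}\left( X\setminus C_{\alpha _{n}}\right) \right) $, via $X\setminus j$-$\limfunc{cl}\left( E\right) =j$-$\limfunc{int}\left( X\setminus E\right) $ and De Morgan, yields $j$-$\limfunc{int}\left( \bigcap_{n\in\mathbb{N}}C_{\alpha _{n}}\right) \cap S=\emptyset $. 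Running the same computation backwards --- starting from an $ij$-regular open cover $\left\{ U_{\alpha }:\alpha \in \Delta \right\} $ of $S$ and the $ij$-regular closed family $\left\{ X\setminus U_{\alpha }:\alpha \in \Delta \right\} $, whose intersection misses $S$ --- deduces $\left( ii\right) $ from $\left( iii\right) $. This is the purely set-theoretic bookkeeping already carried out (without $S$) in the whole-space proof.

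For $\left( ii\right) \Rightarrow \left( i\right) $ under the hypothesis that $X$ is $ij$-semiregular, let $\left\{ U_{\alpha }:\alpha \in \Delta \right\} $ be an $i$-open cover of $S$ by subsets of $X$. For each $x\in S$ pick $\alpha \left( x\right) $ with $x\in U_{\alpha \left( x\right) }$ and, by $ij$-semiregularity, an $ij$-regular open set $W_{x}$ with $x\in W_{x}\subseteq U_{\alpha \left( x\right) }$. Then $\left\{ W_{x}:x\in S\right\} $ is a family of $ij$-regular open subsets of $X$ covering $S$, so $\left( ii\right) $ furnishes a countable subfamily $\left\{ W_{x_{n}}:n\in\mathbb{N}\right\} $ with $S\subseteq j$-$\limfunc{cl}\left( \bigcup_{n\in\mathbb{N}}W_{x_{n}}\right) $; since $W_{x_{n}}\subseteq U_{\alpha \left( x_{n}\right) }$ this forces $S\subseteq j$-$\limfunc{cl}\left( \bigcup_{n\in\mathbb{N}}U_{\alpha \left( x_{n}\right) }\right) $, so $S$ is $ij$-weakly Lindel\"{o}f relative to $X$.

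I expect the only non-mechanical point to be the shrinking step in $\left( ii\right) \Rightarrow \left( i\right) $: replacing an arbitrary $i$-open neighbourhood of a point of $S$ by an $ij$-regular open set contained in it. This is exactly the move ``we can assume $U_{\alpha }$ is $ij$-regular open'' used in the whole-space argument, licensed by $ij$-semiregularity, and it is the sole place the hypothesis is invoked; everything else is the routine complementation and the standard closure/interior identities exploited repeatedly in the paper, so no genuinely new difficulty should arise.
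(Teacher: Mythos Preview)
Your proposal is correct and follows essentially the same approach as the paper's own proof: $(i)\Rightarrow(ii)$ by the observation that $ij$-regular open implies $i$-open, $(ii)\Leftrightarrow(iii)$ by complementation and the standard closure/interior identities, and $(ii)\Rightarrow(i)$ by using $ij$-semiregularity to pass to $ij$-regular open sets. The only difference is cosmetic: where the paper writes ``since $X$ is $ij$-semiregular, we can assume that $U_{\alpha}$ is $ij$-regular open for each $\alpha$'', you spell out the refinement $\{W_{x}\}_{x\in S}$ and the lift back to $\{U_{\alpha(x_{n})}\}$ explicitly, which is if anything a cleaner justification of that step.
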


\begin{proof}
$\left( i\right) \Rightarrow \left( ii\right) $: It is obvious by the
definition since an $ij$-regular open set is also $i$-open set.

$\left( ii\right) \Leftrightarrow \left( iii\right) $: If $\left\{ C_{\alpha
}:\alpha \in \Delta \right\} $ is a family of $ij$-regular closed subsets of
$X$ such that $\left( \dbigcap\limits_{\alpha \in \Delta }C_{\alpha }\right)
\cap S=\emptyset $ , then $S\subseteq X\setminus $ $\dbigcap\limits_{\alpha
\in \Delta }C_{\alpha }=\dbigcup\limits_{\alpha \in \Delta }\left(
X\setminus C_{\alpha }\right) $, i.e., the family $\left\{ X\setminus
C_{\alpha }:\alpha \in \Delta \right\} $ is an $ij$-regular open subsets of $%
X$ that cover $S$. By $\left( ii\right) $, there exists a countable
subfamily $\left\{ X\setminus C_{\alpha _{n}}:n\in
%TCIMACRO{\U{2115} }%
%BeginExpansion
\mathbb{N}
%EndExpansion
\right\} $ such that $S\subseteq j$-$\limfunc{cl}\left(
\dbigcup\limits_{n\in
%TCIMACRO{\U{2115} }%
%BeginExpansion
\mathbb{N}
%EndExpansion
}\left( X\setminus C_{\alpha _{n}}\right) \right) =X\setminus j$-$\limfunc{%
int}\left( X\setminus \dbigcup\limits_{n\in
%TCIMACRO{\U{2115} }%
%BeginExpansion
\mathbb{N}
%EndExpansion
}\left( X\setminus C_{\alpha _{n}}\right) \right) =X\setminus j$-$\limfunc{%
int}\left( \dbigcap\limits_{n\in
%TCIMACRO{\U{2115} }%
%BeginExpansion
\mathbb{N}
%EndExpansion
}C_{\alpha _{n}}\right) $. So, $j$-$\limfunc{int}\left(
\dbigcap\limits_{n\in
%TCIMACRO{\U{2115} }%
%BeginExpansion
\mathbb{N}
%EndExpansion
}C_{\alpha _{n}}\right) \cap S=\emptyset $. Conversely, let $\left\{
U_{\alpha }:\alpha \in \Delta \right\} $ be a family of $ij$-regular open
subsets of $X$ that cover $S$. Then $S\subseteq \dbigcup\limits_{\alpha \in
\Delta }U_{\alpha }$ and $\left\{ X\setminus U_{\alpha }:\alpha \in \Delta
\right\} $ is a family of $ij$-regular closed subsets of $X$. Hence $\left(
X\setminus \dbigcup\limits_{\alpha \in \Delta }U_{\alpha }\right) \cap
S=\emptyset $, i.e., $\left( \dbigcap\limits_{\alpha \in \Delta }\left(
X\setminus U_{\alpha }\right) \right) \cap S=\emptyset $. By $\left(
iii\right) $, there exists a countable subfamily $\left\{ X\setminus
U_{\alpha _{n}}:n\in
%TCIMACRO{\U{2115} }%
%BeginExpansion
\mathbb{N}
%EndExpansion
\right\} $ such that $j$-$\limfunc{int}\left( \dbigcap\limits_{n\in
%TCIMACRO{\U{2115} }%
%BeginExpansion
\mathbb{N}
%EndExpansion
}\left( X\setminus U_{\alpha _{n}}\right) \right) \cap S=\emptyset $. So $%
S\subseteq X\setminus j$-$\limfunc{int}\left( \dbigcap\limits_{n\in
%TCIMACRO{\U{2115} }%
%BeginExpansion
\mathbb{N}
%EndExpansion
}\left( X\setminus U_{\alpha _{n}}\right) \right) =j$-$\limfunc{cl}\left(
X\setminus \dbigcap\limits_{n\in
%TCIMACRO{\U{2115} }%
%BeginExpansion
\mathbb{N}
%EndExpansion
}\left( X\setminus U_{\alpha _{n}}\right) \right) =j$-$\limfunc{cl}\left(
\dbigcup\limits_{n\in
%TCIMACRO{\U{2115} }%
%BeginExpansion
\mathbb{N}
%EndExpansion
}U_{\alpha _{n}}\right) $.

$\left( ii\right) \Rightarrow \left( i\right) $: Let $\left\{ U_{\alpha
}:\alpha \in \Delta \right\} $ be a family of $i$-open subsets of $X$ that
cover $S$. Since $X$ is $ij$-semiregular, we can assume that $U_{\alpha }$
is $ij$-regular open set for each $\alpha $. By $\left( ii\right) $, there
exists a countable subfamily $\left\{ U_{\alpha _{n}}:n\in
%TCIMACRO{\U{2115} }%
%BeginExpansion
\mathbb{N}
%EndExpansion
\right\} $ such that $S\subseteq j$-$\limfunc{cl}\left(
\dbigcup\limits_{n\in
%TCIMACRO{\U{2115} }%
%BeginExpansion
\mathbb{N}
%EndExpansion
}U_{\alpha _{n}}\right) $. This completes the proof.
\end{proof}

\begin{corollary}
Let $X$ be a bitopological space and $S\subseteq X$. For the following
conditions

$\left( i\right) $ $S$ is pairwise weaklyLindel\"{o}f relative to $X;$

$\left( ii\right) $ every family by pairwise regular open subsets $\left\{
U_{\alpha }:\alpha \in \Delta \right\} $ of $X$ that cover $S$ admits a
countable subfamily $\left\{ U_{\alpha _{n}}:n\in
%TCIMACRO{\U{2115} }%
%BeginExpansion
\mathbb{N}
%EndExpansion
\right\} $ with dense union in $S;$

$\left( iii\right) $ every family $\left\{ C_{\alpha }:\alpha \in \Delta
\right\} $ of pairwise regular closed subsets of $X$ such that $\left(
\dbigcap\limits_{\alpha \in \Delta }C_{\alpha }\right) \cap S=\emptyset $
admits a countable subfamily $\left\{ C_{\alpha _{n}}:n\in
%TCIMACRO{\U{2115} }%
%BeginExpansion
\mathbb{N}
%EndExpansion
\right\} $ such that $\limfunc{int}\left( \dbigcap\limits_{n\in
%TCIMACRO{\U{2115} }%
%BeginExpansion
\mathbb{N}
%EndExpansion
}C_{\alpha _{n}}\right) \cap S=\emptyset ;$ we have that $\left(
i\right) \Rightarrow \left( ii\right) \Leftrightarrow \left(
iii\right) $ and if $X$ is pairwise semiregular, then $\left(
ii\right) \Rightarrow \left( i\right) $.
\end{corollary}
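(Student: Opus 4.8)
The plan is to deduce this from the preceding Proposition (the $ij$ statement) by applying it in both index orders $(i,j)=(1,2)$ and $(i,j)=(2,1)$ and then assembling the pieces, in accordance with the paper's convention that every $ij$ result has a pairwise corollary. Two elementary observations make the assembly work. First, a pairwise regular open set is both $1$-open and $2$-open, and complementation is a bijection between the pairwise regular open and the pairwise regular closed subsets of $X$, since for each index order $X\setminus i\text{-}\operatorname{int}\!\left(j\text{-}\operatorname{cl}(A)\right)=i\text{-}\operatorname{cl}\!\left(j\text{-}\operatorname{int}(X\setminus A)\right)$. Second, ``dense in $S$'' and ``$\operatorname{int}(\,\cdot\,)\cap S=\emptyset$'' are, by definition, the conjunctions over $i=1,2$ of ``$i$-dense in $S$'' and ``$i\text{-}\operatorname{int}(\,\cdot\,)\cap S=\emptyset$'' respectively.

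For $(i)\Rightarrow(ii)$: a cover $\left\{U_\alpha:\alpha\in\Delta\right\}$ of $S$ by pairwise regular open subsets of $X$ is in particular both a $1$-open cover and a $2$-open cover of $S$. Since $S$ is $12$-weakly Lindel\"{o}f relative to $X$, there is a countable $\mathcal{A}_1\subseteq\Delta$ with $S\subseteq 2\text{-}\operatorname{cl}\!\left(\bigcup_{\alpha\in\mathcal{A}_1}U_\alpha\right)$; since $S$ is $21$-weakly Lindel\"{o}f relative to $X$, there is a countable $\mathcal{A}_2\subseteq\Delta$ with $S\subseteq 1\text{-}\operatorname{cl}\!\left(\bigcup_{\alpha\in\mathcal{A}_2}U_\alpha\right)$. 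Then $\mathcal{A}_1\cup\mathcal{A}_2$ is countable, and because enlarging a subfamily only enlarges its union, the union over $\mathcal{A}_1\cup\mathcal{A}_2$ is both $1$-dense and $2$-dense in $S$, hence dense in $S$. For $(ii)\Leftrightarrow(iii)$: this is the complementation argument of the Proposition run simultaneously in both index orders, using the first observation above; here no merging of subfamilies is needed, since the single condition $\operatorname{int}\!\left(\bigcap_{n}C_{\alpha_{n}}\right)\cap S=\emptyset$ already bundles the $i=1$ and $i=2$ requirements, and dually for dense unions.

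For $(ii)\Rightarrow(i)$ when $X$ is pairwise semiregular: to show $S$ is $12$-weakly Lindel\"{o}f relative to $X$, take a $1$-open cover $\left\{U_\alpha\right\}$ of $S$, refine it to a cover of $S$ by pairwise regular open sets $\left\{W_\beta\right\}$ with each $W_\beta$ contained in some $U_{\alpha(\beta)}$ (in the manner of the Proposition's proof, now exploiting pairwise semiregularity), apply $(ii)$ to get a countable subfamily of $\left\{W_\beta\right\}$ whose union is $2$-dense in $S$, and replace each selected $W_\beta$ by $U_{\alpha(\beta)}$ to obtain a countable subfamily of $\left\{U_\alpha\right\}$ with $2$-dense union in $S$. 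The same argument with the roles of $1$ and $2$ interchanged shows $S$ is $21$-weakly Lindel\"{o}f relative to $X$, and the two statements together give $(i)$.

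The routine part is the bookkeeping of splitting each pairwise assertion into its two single-index halves and, where necessary, merging two countable subfamilies into one. The step I expect to be the main obstacle is the refinement used for $(ii)\Rightarrow(i)$: producing, from an arbitrary $i$-open cover of $S$ and only the pairwise semiregularity hypothesis, a refinement by \emph{pairwise} regular open sets (so that $(ii)$ is applicable), rather than merely an $ij$-regular open refinement as in the Proposition. This is the one place the hypothesis is essential, and it is where one invokes the bitopological identities $i\text{-}\operatorname{int}\!\left(j\text{-}\operatorname{cl}\!\left(i\text{-}\operatorname{int}(j\text{-}\operatorname{cl}(W))\right)\right)=i\text{-}\operatorname{int}\!\left(j\text{-}\operatorname{cl}(W)\right)$ and its dual to certify that the refining sets are pairwise regular open.
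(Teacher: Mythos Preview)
The paper provides no proof for this corollary; it simply records it as the pairwise shadow of the preceding $ij$-proposition, in line with the announced convention that every $ij$ result yields a pairwise corollary. Your approach---apply the $ij$-proposition for $(i,j)=(1,2)$ and $(2,1)$ and merge---is exactly that intended route, spelled out with more care than the paper itself gives.

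Your treatment of $(i)\Rightarrow(ii)$ and $(ii)\Leftrightarrow(iii)$ is correct and complete. You are also right to flag $(ii)\Rightarrow(i)$ as the delicate point: the pairwise hypothesis $(ii)$ applies only to covers by \emph{pairwise} regular open sets, which is a priori weaker than the conjunction of the two single-index conditions $(ii)_{12}$ and $(ii)_{21}$ (covers by $ij$-regular open sets), so one cannot simply invoke the $ij$-proposition twice. However, the identity you cite at the end, $i\text{-}\operatorname{int}\bigl(j\text{-}\operatorname{cl}\bigl(i\text{-}\operatorname{int}(j\text{-}\operatorname{cl}(W))\bigr)\bigr)=i\text{-}\operatorname{int}\bigl(j\text{-}\operatorname{cl}(W)\bigr)$, only certifies that $i\text{-}\operatorname{int}(j\text{-}\operatorname{cl}(W))$ is $ij$-regular open; its ``dual'' is the corresponding statement with the roles of $i$ and $j$ swapped, which concerns a \emph{different} set and does not make the same $W$ $ji$-regular open. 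So the cited identities do not by themselves produce a pairwise regular open refinement of an arbitrary $1$-open cover, and the obstacle you isolated remains unresolved. The paper does not address this point either; if you want a complete argument, you will need to extract from the paper's notion of pairwise semiregularity (the paragraph $\tau_i\equiv\tau_i^{s}$, not just Definition~6) that the pairwise regular open sets form a base for each $\tau_i$, or else reformulate condition $(ii)$ to make the implication go through.
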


\begin{proposition}
Let $X$ be a bitopological space and $A$ be any subset of $X$. If $A$ is $ij$%
-weakly Lindel\"{o}f, then it is $ij$-weakly Lindel\"{o}f relative to $X$.
\end{proposition}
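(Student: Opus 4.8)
The plan is to translate the $ij$-weakly Lindel\"{o}f property of $A$ \emph{as a subspace} into the ``relative to $X$'' formulation, using only the elementary identity $j$-$\limfunc{cl}_{A}\left( B\right) =j$-$\limfunc{cl}\left( B\right) \cap A$ recorded in the Preliminaries, together with monotonicity of the $j$-closure operator.

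First I would take an arbitrary cover $\left\{ U_{\alpha }:\alpha \in \Delta \right\} $ of $A$ by $i$-open subsets of $X$, so that $A\subseteq \dbigcup\limits_{\alpha \in \Delta }U_{\alpha }$. Intersecting each member with $A$, the family $\left\{ U_{\alpha }\cap A:\alpha \in \Delta \right\} $ is then a cover of $A$ by sets that are $i$-open in the subspace $\left( A,\tau _{1}|_{A},\tau _{2}|_{A}\right) $; it does cover $A$ since every point of $A$ lies in some $U_{\alpha }$. Since $A$ is $ij$-weakly Lindel\"{o}f as a subspace, there is a countable subset $\left\{ \alpha _{n}:n\in \mathbb{N}\right\} $ of $\Delta $ such that $A=j$-$\limfunc{cl}_{A}\left( \dbigcup\limits_{n\in \mathbb{N}}\left( U_{\alpha _{n}}\cap A\right) \right) $.

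Next I would rewrite the subspace closure by the identity above, obtaining $A=j$-$\limfunc{cl}\left( \dbigcup\limits_{n\in \mathbb{N}}\left( U_{\alpha _{n}}\cap A\right) \right) \cap A\subseteq j$-$\limfunc{cl}\left( \dbigcup\limits_{n\in \mathbb{N}}\left( U_{\alpha _{n}}\cap A\right) \right) \subseteq j$-$\limfunc{cl}\left( \dbigcup\limits_{n\in \mathbb{N}}U_{\alpha _{n}}\right) $, where the last inclusion holds because $\dbigcup\limits_{n\in \mathbb{N}}\left( U_{\alpha _{n}}\cap A\right) \subseteq \dbigcup\limits_{n\in \mathbb{N}}U_{\alpha _{n}}$ and the $j$-closure is monotone. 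Hence $A\subseteq j$-$\limfunc{cl}\left( \dbigcup\limits_{n\in \mathbb{N}}U_{\alpha _{n}}\right) $, which is exactly the defining condition for $A$ to be $ij$-weakly Lindel\"{o}f relative to $X$.

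There is essentially no real obstacle here: the only point demanding care is not conflating the subspace closure $j$-$\limfunc{cl}_{A}$ with the ambient closure $j$-$\limfunc{cl}$, and the Preliminaries supply precisely the identity that lets one pass between them. The pairwise version is then an immediate corollary, obtained by running the same argument for both the $ij$- and the $ji$-cases.
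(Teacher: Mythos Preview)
Your proof is correct and follows essentially the same route as the paper's: intersect the ambient $i$-open cover with $A$ to get a subspace $i$-open cover, apply the subspace $ij$-weakly Lindel\"{o}f property, and then pass from $j$-$\limfunc{cl}_{A}$ to $j$-$\limfunc{cl}$ via the identity $j$-$\limfunc{cl}_{A}(B)=j$-$\limfunc{cl}(B)\cap A$ together with monotonicity. The paper compresses the last step into a single inclusion $j$-$\limfunc{cl}_{A}\left(\bigcup_{n}V_{\alpha_{n}}\right)\subseteq j$-$\limfunc{cl}_{X}\left(\bigcup_{n}U_{\alpha_{n}}\right)$, while you spell out the intermediate containments, but the argument is the same.
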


\begin{proof}
Let $\left\{ U_{\alpha }:\alpha \in \Delta \right\} $ be a family of $i$%
-open subsets of $X$ that cover $A$. Then for each $\alpha $, we can find an
$i$-open set $V_{\alpha }$ of $A$ with $U_{\alpha }\cap A=V_{\alpha }$. Thus
$\left\{ V_{\alpha }:\alpha \in \Delta \right\} $ be a cover of $A$ by $i$%
-open subsets of $A$. Since $A$ is $ij$-weakly Lindel\"{o}f, then there
exists a countable subset $\left\{ \alpha _{n}:n\in
%TCIMACRO{\U{2115} }%
%BeginExpansion
\mathbb{N}
%EndExpansion
\right\} $ of $\Delta $ such that $A=j$-$\limfunc{cl}_{A}\left(
\dbigcup\limits_{n\in
%TCIMACRO{\U{2115} }%
%BeginExpansion
\mathbb{N}
%EndExpansion
}V_{\alpha _{n}}\right) \subseteq j$-$\limfunc{cl}_{X}\left(
\dbigcup\limits_{n\in
%TCIMACRO{\U{2115} }%
%BeginExpansion
\mathbb{N}
%EndExpansion
}U_{\alpha _{n}}\right) $. Therefore $A$ is $ij$-weakly Lindel\"{o}f
relative to $X$.
\end{proof}

\begin{corollary}
Let $X$ be a bitopological space and $A$ be any subset of $X$. If $A$ is
pairwise weakly Lindel\"{o}f, then it is pairwise weakly Lindel\"{o}f
relative to $X$.
\end{corollary}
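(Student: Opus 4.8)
The plan is to derive this directly from the preceding Proposition, which states that any $ij$-weakly Lindel\"{o}f subset of a bitopological space is $ij$-weakly Lindel\"{o}f relative to that space. Since every notion appearing here is defined symmetrically in the indices $i$ and $j$, nothing new needs to be proved; one simply applies that Proposition in both index orders.

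Concretely, I would first unwind the hypothesis: by the definition of pairwise weak Lindel\"{o}fness, $A$ being pairwise weakly Lindel\"{o}f as a subspace of $X$ means precisely that $A$ is $ij$-weakly Lindel\"{o}f and $ji$-weakly Lindel\"{o}f. Feeding the first of these into the preceding Proposition gives that $A$ is $ij$-weakly Lindel\"{o}f relative to $X$; feeding the second in (equivalently, applying the Proposition with $i$ and $j$ swapped) gives that $A$ is $ji$-weakly Lindel\"{o}f relative to $X$. By the definition of ``pairwise weakly Lindel\"{o}f relative to $X$'' --- being simultaneously $ij$-weakly Lindel\"{o}f relative to $X$ and $ji$-weakly Lindel\"{o}f relative to $X$ --- this is exactly the desired conclusion, so the corollary follows from the Proposition together with the relevant definitions.

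I do not expect any genuine obstacle: this is a formal corollary, and the only point deserving a moment's care is to confirm that the hypothesis and the conclusion of the Proposition being invoked are each invariant under the exchange $i \leftrightarrow j$, so that the single generic statement really does discharge both halves. If a self-contained argument were preferred instead, one would simply transcribe the Proposition's proof twice: starting from an $i$-open (respectively $j$-open) cover of $A$ by subsets of $X$, intersect with $A$ to obtain a relatively open cover of the subspace $A$, use pairwise weak Lindel\"{o}fness of $A$ to extract a countable subfamily whose $j$-closure (respectively $i$-closure) computed in $A$ equals $A$, and finally note that this relative closure is contained in the corresponding closure taken in $X$. This, however, merely repeats earlier work, so citing the Proposition is the economical route.
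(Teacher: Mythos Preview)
Your proposal is correct and matches the paper's approach: the paper states this corollary without proof, treating it as the immediate ``pairwise'' consequence of the preceding Proposition obtained by applying that result with the indices in both orders, exactly as you describe.
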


\noindent \textbf{Question 2. }Is the converse of Proposition 12
above true?

\noindent The authors expected that the answer is false.\medskip

\noindent The converse of Proposition 12 hold if $A\subseteq X$ is
$i$-open. We prove that in the following proposition.

\begin{proposition}
Let $X$ be a bitopological space and $A$ an $i$-open subset of $X$. Then $A$
is $ij$-weakly Lindel\"{o}f if and only if it is $ij$-weakly Lindel\"{o}f
relative to $X$.
\end{proposition}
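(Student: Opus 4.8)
The plan is to prove the two implications separately. The forward implication --- if $A$ is $ij$-weakly Lindel\"{o}f then it is $ij$-weakly Lindel\"{o}f relative to $X$ --- is precisely Proposition 12 and requires no hypothesis on $A$ at all, so I would simply invoke it. All the content of the statement is in the converse, and this is where the $i$-openness of $A$ enters.

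For the converse, suppose $A$ is $ij$-weakly Lindel\"{o}f relative to $X$, and let $\left\{ V_{\alpha }:\alpha \in \Delta \right\}$ be an $i$-open cover of $A$ as a subspace, i.e. each $V_{\alpha }$ is $i$-open in $\left( A,\tau _{1}|_{A},\tau _{2}|_{A}\right)$ and $A=\dbigcup\limits_{\alpha \in \Delta }V_{\alpha }$. The key observation is that, since $A$ is $i$-open in $X$, every subset of $A$ that is $i$-open in the subspace $A$ is already $i$-open in $X$; hence $\left\{ V_{\alpha }:\alpha \in \Delta \right\}$ is a family of $i$-open subsets of $X$ with $A\subseteq \dbigcup\limits_{\alpha \in \Delta }V_{\alpha }$. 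Applying the definition of ``$ij$-weakly Lindel\"{o}f relative to $X$'' to this family gives a countable subset $\left\{ \alpha _{n}:n\in \mathbb{N}\right\}$ of $\Delta $ such that $A\subseteq j\text{-}\limfunc{cl}\left( \dbigcup\limits_{n\in \mathbb{N}}V_{\alpha _{n}}\right)$, the closure taken in $X$.

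It then remains to push this back into the subspace. Intersecting with $A$ and using the identity $j\text{-}\limfunc{cl}_{A}\left( B\right) =j\text{-}\limfunc{cl}\left( B\right) \cap A$ recorded in the Preliminaries, applied to $B=\dbigcup\limits_{n\in \mathbb{N}}V_{\alpha _{n}}\subseteq A$, I obtain $A=A\cap j\text{-}\limfunc{cl}\left( \dbigcup\limits_{n\in \mathbb{N}}V_{\alpha _{n}}\right) =j\text{-}\limfunc{cl}_{A}\left( \dbigcup\limits_{n\in \mathbb{N}}V_{\alpha _{n}}\right)$, which is exactly the assertion that $A$, as a subspace, is $ij$-weakly Lindel\"{o}f. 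I do not expect a genuine obstacle here: the hypothesis that $A$ is $i$-open is used only to promote the subspace-$i$-open members of the cover to honest $i$-open subsets of $X$ (so that the relative-Lindel\"{o}f hypothesis can be applied to this cover), and the remaining work is the routine bookkeeping relating $j\text{-}\limfunc{cl}_{A}$ and $j\text{-}\limfunc{cl}$ in $X$. The one point worth emphasizing is that $i$-openness of $A$ is genuinely needed for this argument and cannot be dropped, consistent with the expectation stated after Proposition 12 that the unrestricted converse fails.
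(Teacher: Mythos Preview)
Your proposal is correct and follows essentially the same argument as the paper: invoke Proposition 12 for the forward direction, and for the converse use $i$-openness of $A$ to promote a subspace-$i$-open cover to an $i$-open cover in $X$, apply the relative hypothesis, then pass back to the subspace closure via $j\text{-}\limfunc{cl}_{A}(B)=j\text{-}\limfunc{cl}(B)\cap A$. The paper's write-up inserts a couple of tautological intermediate equalities (since each $U_{\alpha_n}\subseteq A$ already), but the logic is identical to yours.
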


\begin{proof}
The proof of necessity can be obtained from Proposition 12. For the
sufficiency, let $\left\{ U_{\alpha }:\alpha \in \Delta \right\} $ be an $i$%
-open cover of $A$. Since $i$-open subsets of an $i$-open subspace of $X$ is
$i$-open in $X$, then $\left\{ U_{\alpha }:\alpha \in \Delta \right\} $ is a
cover of $A$ by $i$-open subsets of $X$. Since $A$ is $ij$-weakly Lindel\"{o}%
f relative to $X$, there exists a countable subset $\left\{ \alpha _{n}:n\in
%TCIMACRO{\U{2115} }%
%BeginExpansion
\mathbb{N}
%EndExpansion
\right\} $ of $\Delta $ such that $A\subseteq j$-$\limfunc{cl}_{X}\left(
\dbigcup\limits_{n\in
%TCIMACRO{\U{2115} }%
%BeginExpansion
\mathbb{N}
%EndExpansion
}U_{\alpha _{n}}\right) $. Then $A=j$-$\limfunc{cl}_{X}\left(
\dbigcup\limits_{n\in
%TCIMACRO{\U{2115} }%
%BeginExpansion
\mathbb{N}
%EndExpansion
}U_{\alpha _{n}}\right) \cap A=j$-$\limfunc{cl}_{X}\left( \left(
\dbigcup\limits_{n\in
%TCIMACRO{\U{2115} }%
%BeginExpansion
\mathbb{N}
%EndExpansion
}U_{\alpha _{n}}\right) \cap A\right) =j$-$\limfunc{cl}_{X}\left(
\dbigcup\limits_{n\in
%TCIMACRO{\U{2115} }%
%BeginExpansion
\mathbb{N}
%EndExpansion
}\left( U_{\alpha _{n}}\cap A\right) \right) =j$-$\limfunc{cl}_{A}\left(
\dbigcup\limits_{n\in
%TCIMACRO{\U{2115} }%
%BeginExpansion
\mathbb{N}
%EndExpansion
}U_{\alpha _{n}}\right) $. Therefore $A$ is $ij$-weakly Lindel\"{o}f.
\end{proof}

\begin{corollary}
Let $X$ be a bitopological space and $A$ an open subset of $X$. Then $A$ is
pairwise weakly Lindel\"{o}f if and only if it is pairwise weakly Lindel\"{o}%
f relative to $X$.
\end{corollary}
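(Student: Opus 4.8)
The plan is to reduce this statement directly to Proposition 13 by decoupling the two "pairwise" conditions into their $ij$-components. Recall from Definition 7 that an \emph{open} subset $A$ of $X$ is simultaneously $1$-open and $2$-open, and that (by the definition of "pairwise") $A$ is pairwise weakly Lindel\"{o}f exactly when it is both $12$-weakly Lindel\"{o}f and $21$-weakly Lindel\"{o}f, while $A$ is pairwise weakly Lindel\"{o}f relative to $X$ exactly when it is both $12$-weakly Lindel\"{o}f relative to $X$ and $21$-weakly Lindel\"{o}f relative to $X$. The whole proof is then just two applications of Proposition 13, once with $(i,j)=(1,2)$ and once with $(i,j)=(2,1)$.

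First I would treat the forward direction. Suppose $A$ is pairwise weakly Lindel\"{o}f. Then $A$ is $12$-weakly Lindel\"{o}f; since $A$ is $1$-open, Proposition 13 applied with $i=1$, $j=2$ gives that $A$ is $12$-weakly Lindel\"{o}f relative to $X$. Symmetrically, $A$ is $21$-weakly Lindel\"{o}f and $A$ is $2$-open, so Proposition 13 applied with $i=2$, $j=1$ gives that $A$ is $21$-weakly Lindel\"{o}f relative to $X$. Putting the two conclusions together, $A$ is pairwise weakly Lindel\"{o}f relative to $X$.

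For the converse I would run exactly the same argument backwards: if $A$ is pairwise weakly Lindel\"{o}f relative to $X$, then it is both $12$-weakly Lindel\"{o}f relative to $X$ and $21$-weakly Lindel\"{o}f relative to $X$; invoking the "only if" half of Proposition 13 (legitimate since $A$ is $1$-open for the first index ordering and $2$-open for the second) yields that $A$ is $12$-weakly Lindel\"{o}f and $21$-weakly Lindel\"{o}f, hence pairwise weakly Lindel\"{o}f. There is essentially no obstacle to overcome, since everything rests on the already-established equivalence in Proposition 13 together with the definitions; the only point worth stating carefully is that the hypothesis of Proposition 13 must be checked in \emph{both} index orderings, and it is precisely the full openness of $A$ (rather than mere $i$-openness for a single $i$) that supplies this. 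This is exactly why the corollary is phrased for open $A$ and does not follow from Proposition 13 for a subset that is only, say, $1$-open.
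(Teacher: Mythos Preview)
Your argument is correct and is exactly the approach the paper takes: the corollary is stated immediately after Proposition 13 with no separate proof, since it is simply the pairwise version obtained by applying Proposition 13 once with $(i,j)=(1,2)$ and once with $(i,j)=(2,1)$, using that an open subset is both $1$-open and $2$-open. One trivial slip: in your converse paragraph you are actually invoking the ``if'' half of Proposition 13, not the ``only if'' half, but since Proposition 13 is a full equivalence this does not affect the validity of the proof.
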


\begin{remark}
The above result shows that in an $i$-open subset of a bitopological space $%
X $, $ij$-weakly Lindel\"{o}f property and $ij$-weakly Lindel\"{o}f relative
to $X$ property are equivalent.
\end{remark}

\begin{remark}
The space $X$ in Proposition $12$, Proposition $13$, Corollary $12$ and
Corollary $13$ is any bitopological space.
\end{remark}

\noindent If we consider $X$ itself is an $ij$-weakly Lindel\"{o}f
space, we have the following results.

\begin{proposition}
Every $ij$-regular closed and $j$-open subset of an $ij$-weakly Lindel\"{o}f
and $ij$-semiregular space $X$ is $ij$-weakly Lindel\"{o}f relative to $X$.
\end{proposition}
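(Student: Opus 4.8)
The plan is to reduce the statement to the hypothesis that $X$ is $ij$-weakly Lindel\"{o}f, by the familiar device of adjoining the complement $X\setminus S$ to any given cover of $S$ so as to produce a cover of the whole space; the two conditions imposed on $S$ are precisely what guarantee that $X\setminus S$ is well enough behaved for this device to succeed.

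First I would unwind the assumptions on $S$. Since $S$ is $j$-open we have $j$-$\limfunc{int}\left( S\right) =S$, so the defining identity $i$-$\limfunc{cl}\left( j\text{-}\limfunc{int}\left( S\right) \right) =S$ of $ij$-regular closedness collapses to $i$-$\limfunc{cl}\left( S\right) =S$; hence $S$ is $i$-closed. Consequently $X\setminus S$ is $i$-open, and, being the complement of a $j$-open set, it is also $j$-closed, i.e. $j$-$\limfunc{cl}\left( X\setminus S\right) =X\setminus S$. These two facts are the only properties of $S$ the argument will use; in particular the conclusion already holds for any $i$-closed and $j$-open subset of an $ij$-weakly Lindel\"{o}f space.

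Next, let $\left\{ U_{\alpha }:\alpha \in \Delta \right\} $ be an arbitrary cover of $S$ by $i$-open subsets of $X$. Then $\left\{ U_{\alpha }:\alpha \in \Delta \right\} \cup \left\{ X\setminus S\right\} $ is an $i$-open cover of $X$, so since $X$ is $ij$-weakly Lindel\"{o}f it admits a countable subfamily with $j$-dense union in $X$; enlarging that subfamily if necessary (which only enlarges its union, hence does no harm) I may assume it has the form $\left\{ U_{\alpha _{n}}:n\in \mathbb{N}\right\} \cup \left\{ X\setminus S\right\} $ for some countable $\left\{ \alpha _{n}:n\in \mathbb{N}\right\} \subseteq \Delta $. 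Using finite additivity of the closure operator together with the $j$-closedness of $X\setminus S$,
\[
X=j\text{-}\limfunc{cl}\left( \left( \dbigcup\limits_{n\in \mathbb{N}}U_{\alpha _{n}}\right) \cup \left( X\setminus S\right) \right) =j\text{-}\limfunc{cl}\left( \dbigcup\limits_{n\in \mathbb{N}}U_{\alpha _{n}}\right) \cup \left( X\setminus S\right) .
\]
Intersecting both sides with $S$ yields $S\subseteq j$-$\limfunc{cl}\left( \dbigcup\limits_{n\in \mathbb{N}}U_{\alpha _{n}}\right) $, which is exactly the defining condition for $S$ to be $ij$-weakly Lindel\"{o}f relative to $X$.

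I do not expect a genuine obstacle; the one step that needs care is the first, namely checking that ``$ij$-regular closed'' together with ``$j$-open'' really does force $S$ to be $i$-closed and $X\setminus S$ to be $j$-closed, since these are precisely the two facts that make the complement-adjoining trick legitimate. If one prefers an argument in the spirit of the preceding development --- one in which the $ij$-semiregularity of $X$ is visibly used --- I would instead first observe that $X\setminus S$ is $ij$-regular open, then establish condition $\left( ii\right) $ of Proposition 11 for $S$ by running the same complement argument on $ij$-regular open covers of $S$ (every $ij$-regular open cover of $X$ being in particular an $i$-open cover, so that the $ij$-weakly Lindel\"{o}f property of $X$ applies), and finally invoke the implication $\left( ii\right) \Rightarrow \left( i\right) $ of Proposition 11, whose hypothesis is exactly that $X$ be $ij$-semiregular, to conclude that $S$ is $ij$-weakly Lindel\"{o}f relative to $X$.
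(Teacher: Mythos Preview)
Your direct argument is correct, and it is in fact cleaner than the paper's. The paper argues via the $ij$-regular open route you sketch at the end: it takes an $ij$-regular open cover of $A$, adjoins the $ij$-regular open set $X\setminus A$, extracts a countable subfamily from the $ij$-weakly Lindel\"{o}f hypothesis on $X$, uses $j$-closedness of $X\setminus A$ to peel it off, and then (implicitly via the subset analogue of Proposition~3, namely Proposition~11, $(ii)\Rightarrow(i)$) invokes $ij$-semiregularity to pass from regular-open covers back to arbitrary $i$-open covers. Your first argument bypasses this detour entirely: by observing that ``$ij$-regular closed and $j$-open'' already forces $S$ to be $i$-closed, you can adjoin $X\setminus S$ to an arbitrary $i$-open cover right away and apply the definition of $ij$-weakly Lindel\"{o}f directly. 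The upshot is that your proof never uses the $ij$-semiregularity hypothesis and in fact establishes the stronger statement that every $i$-closed and $j$-open subset of an $ij$-weakly Lindel\"{o}f space is $ij$-weakly Lindel\"{o}f relative to $X$; this subsumes the paper's Proposition~14 and also the ``relative to $X$'' half of Proposition~15. What the paper's approach buys is only a closer alignment with the machinery it has already set up; your route is both shorter and yields a sharper result.
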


\begin{proof}
Let $A$ be an $ij$-regular closed and $j$-open subset of $X$. If $\left\{
U_{\alpha }:\alpha \in \Delta \right\} $ is a cover of $A$ by $ij$-regular
open subsets of $X$, then $X=\left( \underset{\alpha \in \Delta }{\bigcup }%
U_{\alpha }\right) \cup \left( X\setminus A\right) $. Hence the family $%
\left\{ U_{\alpha }:\alpha \in \Delta \right\} \cup \left\{ X\setminus
A\right\} $ forms an $ij$-regular open cover of $X$. Since $X$ is $ij$%
-weakly Lindel\"{o}f, there will be a countable subfamily, $\left\{
X\setminus A,U_{\alpha _{1}},U_{\alpha _{2}},\ldots \right\} $ such that%
\begin{eqnarray*}
X &=&j\text{-}\limfunc{cl}\left( \dbigcup\limits_{n\in
%TCIMACRO{\U{2115} }%
%BeginExpansion
\mathbb{N}
%EndExpansion
}\left( U_{\alpha _{n}}\cup X\setminus A\right) \right) \\
&=&j\text{-}\limfunc{cl}\left( \left( \dbigcup\limits_{n\in
%TCIMACRO{\U{2115} }%
%BeginExpansion
\mathbb{N}
%EndExpansion
}U_{\alpha _{n}}\right) \cup X\setminus A\right) \\
&=&j\text{-}\limfunc{cl}\left( \dbigcup\limits_{n\in
%TCIMACRO{\U{2115} }%
%BeginExpansion
\mathbb{N}
%EndExpansion
}U_{\alpha _{n}}\right) \cup j\text{-}\limfunc{cl}\left( X\setminus A\right)
\\
&=&j\text{-}\limfunc{cl}\left( \dbigcup\limits_{n\in
%TCIMACRO{\U{2115} }%
%BeginExpansion
\mathbb{N}
%EndExpansion
}U_{\alpha _{n}}\right) \cup \left( X\setminus A\right)
\end{eqnarray*}%
by Proposition 3. But $A$ and $X\setminus A$ are disjoint; hence $A\subseteq
j$-$\limfunc{cl}\left( \dbigcup\limits_{n\in
%TCIMACRO{\U{2115} }%
%BeginExpansion
\mathbb{N}
%EndExpansion
}U_{\alpha _{n}}\right) $. This shows that $A$ is $ij$-weakly Lindel\"{o}f
relative to $X$ and completes the proof.
\end{proof}

\begin{corollary}
Every pairwise regular closed and open subset of a pairwise weakly Lindel%
\"{o}f and pairwise semiregular space $X$ is pairwise weakly Lindel\"{o}f
relative to $X$.
\end{corollary}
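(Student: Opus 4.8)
The plan is to verify the characterisation in Proposition 11: since $X$ is assumed $ij$-semiregular, it suffices to check condition $\left( ii\right) $ there for $S=A$, that is, that every family $\left\{ U_{\alpha }:\alpha \in \Delta \right\} $ of $ij$-regular open subsets of $X$ covering $A$ admits a countable subfamily whose union is $j$-dense in $A$. Before doing this I would isolate the two places where the hypotheses on $A$ will be used. First, since $A$ is $ij$-regular closed, $X\setminus A=X\setminus i$-$\limfunc{cl}\left( j\text{-}\limfunc{int}\left( A\right) \right) =i$-$\limfunc{int}\left( j\text{-}\limfunc{cl}\left( X\setminus A\right) \right) $, so $X\setminus A$ is $ij$-regular open. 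Second, since $A$ is $j$-open, $X\setminus A$ is $j$-closed, so $j$-$\limfunc{cl}\left( X\setminus A\right) =X\setminus A$.

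Given such a cover $\left\{ U_{\alpha }:\alpha \in \Delta \right\} $ of $A$, I would adjoin the set $X\setminus A$, so that $\left\{ U_{\alpha }:\alpha \in \Delta \right\} \cup \left\{ X\setminus A\right\} $ is, by the first observation, an $ij$-regular open cover of the whole space $X$. Since $X$ is $ij$-weakly Lindel\"{o}f, applying the $ij$-weak Lindel\"{o}f property of $X$ (equivalently Proposition 3) to this $i$-open cover yields a countable subfamily whose union is $j$-dense in $X$; throwing $X\setminus A$ into this subfamily if it is not already present leaves the union $j$-dense, so I may take it in the form $\left\{ X\setminus A\right\} \cup \left\{ U_{\alpha _{n}}:n\in \mathbb{N}\right\} $, which gives
\begin{equation*}
X=j\text{-}\limfunc{cl}\left( \left( \dbigcup\limits_{n\in \mathbb{N}}U_{\alpha _{n}}\right) \cup \left( X\setminus A\right) \right) .
\end{equation*}

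Now I would use that $j$-closure commutes with finite unions, together with the second observation, to rewrite the right-hand side as $j$-$\limfunc{cl}\left( \dbigcup\limits_{n\in \mathbb{N}}U_{\alpha _{n}}\right) \cup \left( X\setminus A\right) $. Intersecting with $A$ and using $A\cap \left( X\setminus A\right) =\emptyset $ forces $A\subseteq j$-$\limfunc{cl}\left( \dbigcup\limits_{n\in \mathbb{N}}U_{\alpha _{n}}\right) $, which is precisely condition $\left( ii\right) $ of Proposition 11; the implication $\left( ii\right) \Rightarrow \left( i\right) $ of that proposition, available because $X$ is $ij$-semiregular, then gives that $A$ is $ij$-weakly Lindel\"{o}f relative to $X$. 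I do not anticipate a genuine obstacle here; the only point that deserves care is recognising that both hypotheses on $A$ are really needed — $ij$-regular closedness to keep the augmented family inside the class of $ij$-regular open sets, and $j$-openness to ensure $j$-$\limfunc{cl}\left( X\setminus A\right) $ does not absorb part of $A$ — after which the argument is just bookkeeping around the passage between a cover of $A$ and a cover of $X$.
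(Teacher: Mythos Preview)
Your proposal is correct and follows essentially the same route as the paper's proof of Proposition 14 (from which this corollary is immediate): adjoin $X\setminus A$ to an $ij$-regular open cover of $A$, apply the $ij$-weak Lindel\"{o}f property of $X$ via Proposition 3, split the $j$-closure using the $j$-closedness of $X\setminus A$, and conclude. The only difference is cosmetic---you make the appeal to Proposition 11 (condition $(ii)\Rightarrow(i)$ under $ij$-semiregularity) explicit, whereas the paper leaves that reduction implicit when it asserts at the end that $A$ is $ij$-weakly Lindel\"{o}f relative to $X$.
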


\begin{proposition}
An $i$-clopen and $j$-open subset of an $ij$-weakly Lindel\"{o}f space $X$
is $ij$-weakly Lindel\"{o}f.
\end{proposition}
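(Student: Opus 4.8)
The plan is to reduce the statement to the relative notion and then invoke Proposition 13. Since an $i$-clopen set is in particular $i$-open, Proposition 13 says that such an $A$ is $ij$-weakly Lindel\"{o}f if and only if it is $ij$-weakly Lindel\"{o}f relative to $X$. Hence it suffices to prove that $A$ is $ij$-weakly Lindel\"{o}f relative to $X$, and for this I would argue directly from the definition together with the two defining properties of $A$.

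Concretely, let $\left\{ U_{\alpha }:\alpha \in \Delta \right\} $ be a family of $i$-open subsets of $X$ covering $A$. Because $A$ is $i$-closed, $X\setminus A$ is $i$-open, so $\left\{ U_{\alpha }:\alpha \in \Delta \right\} \cup \left\{ X\setminus A\right\} $ is an $i$-open cover of the whole space $X$. Since $X$ is $ij$-weakly Lindel\"{o}f, there is a countable subset $\left\{ \alpha _{n}:n\in \mathbb{N}\right\} $ of $\Delta $ with $X=j$-$\limfunc{cl}\left( \left( \dbigcup\limits_{n\in \mathbb{N}}U_{\alpha _{n}}\right) \cup \left( X\setminus A\right) \right) $ (if $X\setminus A$ was needed from the countable subfamily it only helps; if not, we simply adjoin it). Using that $j$-$\limfunc{cl}$ distributes over finite unions, this equals $j$-$\limfunc{cl}\left( \dbigcup\limits_{n\in \mathbb{N}}U_{\alpha _{n}}\right) \cup j$-$\limfunc{cl}\left( X\setminus A\right) $, and since $A$ is $j$-open the set $X\setminus A$ is $j$-closed, so $j$-$\limfunc{cl}\left( X\setminus A\right) =X\setminus A$. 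Therefore $X=j$-$\limfunc{cl}\left( \dbigcup\limits_{n\in \mathbb{N}}U_{\alpha _{n}}\right) \cup \left( X\setminus A\right) $, and since $A$ and $X\setminus A$ are disjoint we conclude $A\subseteq j$-$\limfunc{cl}\left( \dbigcup\limits_{n\in \mathbb{N}}U_{\alpha _{n}}\right) $. This is exactly the assertion that $A$ is $ij$-weakly Lindel\"{o}f relative to $X$, and Proposition 13 then yields that $A$ is $ij$-weakly Lindel\"{o}f.

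There is no genuine obstacle here; the content is simply seeing why both hypotheses on $A$ are used. The $i$-closedness is what allows adjoining $X\setminus A$ to turn a cover of $A$ into an $i$-open cover of $X$, and the $j$-openness is what lets the spurious piece $X\setminus A$ be stripped off again after applying $X$'s weak Lindel\"{o}f property, via the elementary distributivity of $j$-$\limfunc{cl}$ over finite unions. The reduction from ``relative to $X$'' back to the subspace property is handled entirely by Proposition 13.

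\begin{proof}
Since $A$ is $i$-clopen it is in particular $i$-open, so by Proposition 13 it is enough to show that $A$ is $ij$-weakly Lindel\"{o}f relative to $X$. Let $\left\{ U_{\alpha }:\alpha \in \Delta \right\} $ be a family of $i$-open subsets of $X$ with $A\subseteq \dbigcup\limits_{\alpha \in \Delta }U_{\alpha }$. As $A$ is $i$-closed, $X\setminus A$ is $i$-open, hence $\left\{ U_{\alpha }:\alpha \in \Delta \right\} \cup \left\{ X\setminus A\right\} $ is an $i$-open cover of $X$. Since $X$ is $ij$-weakly Lindel\"{o}f, there is a countable subset $\left\{ \alpha _{n}:n\in \mathbb{N}\right\} $ of $\Delta $ such that
\begin{equation*}
X=j\text{-}\limfunc{cl}\left( \left( \dbigcup\limits_{n\in \mathbb{N}}U_{\alpha _{n}}\right) \cup \left( X\setminus A\right) \right) =j\text{-}\limfunc{cl}\left( \dbigcup\limits_{n\in \mathbb{N}}U_{\alpha _{n}}\right) \cup j\text{-}\limfunc{cl}\left( X\setminus A\right) .
\end{equation*}
Since $A$ is $j$-open, $X\setminus A$ is $j$-closed, so $j$-$\limfunc{cl}\left( X\setminus A\right) =X\setminus A$. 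Thus $X=j$-$\limfunc{cl}\left( \dbigcup\limits_{n\in \mathbb{N}}U_{\alpha _{n}}\right) \cup \left( X\setminus A\right) $, and because $A\cap \left( X\setminus A\right) =\emptyset $ it follows that $A\subseteq j$-$\limfunc{cl}\left( \dbigcup\limits_{n\in \mathbb{N}}U_{\alpha _{n}}\right) $. Hence $A$ is $ij$-weakly Lindel\"{o}f relative to $X$, and by Proposition 13, $A$ is $ij$-weakly Lindel\"{o}f.
\end{proof}
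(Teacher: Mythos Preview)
Your proof is correct and follows essentially the same route as the paper's: reduce via Proposition~13 to showing $A$ is $ij$-weakly Lindel\"{o}f relative to $X$, adjoin $X\setminus A$ to get an $i$-open cover of $X$, apply the $ij$-weakly Lindel\"{o}f hypothesis, split the $j$-closure over the finite union, and use $j$-openness of $A$ to drop the $X\setminus A$ piece. Your write-up is in fact slightly more explicit than the paper's in justifying why $j$-$\limfunc{cl}(X\setminus A)=X\setminus A$ and in handling the possible inclusion of $X\setminus A$ in the countable subfamily.
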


\begin{proof}
Let $F$ be an $i$-clopen and $j$-open subset of $X$. By Proposition 13
above, it is sufficient to prove that $F$ is $ij$-almost Lindel\"{o}f
relative to $X$. Let $\left\{ U_{\alpha }:\alpha \in \Delta \right\} $ be a
family of $i$-open subsets of $X$ that cover $F$. Then $\left\{ U_{\alpha
}:\alpha \in \Delta \right\} \cup \left\{ X\setminus F\right\} $ forms an $i$%
-open cover of $X$. Since $X$ is $ij$-weakly Lindel\"{o}f, there exists a
countable subset $\left\{ \alpha _{n}:n\in
%TCIMACRO{\U{2115} }%
%BeginExpansion
\mathbb{N}
%EndExpansion
\right\} $ of $\Delta $ such that%
\begin{eqnarray*}
X &=&j\text{-}\limfunc{cl}\left( \dbigcup\limits_{n\in
%TCIMACRO{\U{2115} }%
%BeginExpansion
\mathbb{N}
%EndExpansion
}\left( U_{\alpha _{n}}\cup X\setminus F\right) \right) \\
&=&j\text{-}\limfunc{cl}\left( \left( \dbigcup\limits_{n\in
%TCIMACRO{\U{2115} }%
%BeginExpansion
\mathbb{N}
%EndExpansion
}U_{\alpha _{n}}\right) \cup \left( X\setminus F\right) \right) \\
&=&j\text{-}\limfunc{cl}\left( \dbigcup\limits_{n\in
%TCIMACRO{\U{2115} }%
%BeginExpansion
\mathbb{N}
%EndExpansion
}U_{\alpha _{n}}\right) \cup j\text{-}\limfunc{cl}\left( X\setminus F\right)
\\
&=&j\text{-}\limfunc{cl}\left( \dbigcup\limits_{n\in
%TCIMACRO{\U{2115} }%
%BeginExpansion
\mathbb{N}
%EndExpansion
}U_{\alpha _{n}}\right) \cup \left( X\setminus F\right) .
\end{eqnarray*}%
But $F$ and $X\setminus F$ are disjoint; hence $F\subseteq j$-$\limfunc{cl}%
\left( \dbigcup\limits_{n\in
%TCIMACRO{\U{2115} }%
%BeginExpansion
\mathbb{N}
%EndExpansion
}U_{\alpha _{n}}\right) $. This shows that $F$ is $ij$-weakly Lindel\"{o}f
relative to $X$ and completes the proof.
\end{proof}

\begin{corollary}
A clopen subset of a pairwise weakly Lindel\"{o}f space $X$ is pairwise
weakly Lindel\"{o}f.
\end{corollary}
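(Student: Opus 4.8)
The plan is to reduce the statement to the relative notion via Proposition 13 and then pull an arbitrary cover of $F$ back to a cover of all of $X$. Write $F$ for the given subset, which is $i$-clopen and $j$-open. In particular $F$ is $i$-open, so by Proposition 13 it is enough to prove that $F$ is $ij$-weakly Lindel\"{o}f \emph{relative to} $X$.

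To this end, let $\left\{ U_{\alpha }:\alpha \in \Delta \right\}$ be a family of $i$-open subsets of $X$ that covers $F$. Since $F$ is $i$-closed, $X\setminus F$ is $i$-open, so $\left\{ U_{\alpha }:\alpha \in \Delta \right\}\cup \left\{ X\setminus F\right\}$ is an $i$-open cover of $X$. As $X$ is $ij$-weakly Lindel\"{o}f, there is a countable subset $\left\{ \alpha _{n}:n\in \mathbb{N}\right\}$ of $\Delta$ with
\[
X=j\text{-}\limfunc{cl}\left( \left( \bigcup_{n\in \mathbb{N}}U_{\alpha _{n}}\right) \cup \left( X\setminus F\right) \right) =j\text{-}\limfunc{cl}\left( \bigcup_{n\in \mathbb{N}}U_{\alpha _{n}}\right) \cup j\text{-}\limfunc{cl}\left( X\setminus F\right) ,
\]
using the finite additivity of the closure operator. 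Here is where the $j$-openness of $F$ enters: $X\setminus F$ is $j$-closed, so $j\text{-}\limfunc{cl}\left( X\setminus F\right) =X\setminus F$, and the display collapses to $X=j\text{-}\limfunc{cl}\left( \bigcup_{n\in \mathbb{N}}U_{\alpha _{n}}\right) \cup \left( X\setminus F\right)$. Intersecting with $F$ and using $F\cap \left( X\setminus F\right) =\emptyset$ gives $F\subseteq j\text{-}\limfunc{cl}\left( \bigcup_{n\in \mathbb{N}}U_{\alpha _{n}}\right)$, which is exactly the defining property of ``$F$ is $ij$-weakly Lindel\"{o}f relative to $X$''.

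Finally, since $F$ is $i$-open and $ij$-weakly Lindel\"{o}f relative to $X$, Proposition 13 yields that $F$ is $ij$-weakly Lindel\"{o}f, and the pairwise version then follows as the stated corollary. I do not anticipate a genuine obstacle here; the only points worth recording carefully are that \emph{both} hypotheses on $F$ are needed --- $i$-closedness to enlarge a cover of $F$ into a cover of $X$, and $j$-openness to make the added set $X\setminus F$ vanish under $j$-closure --- and the small disjointness bookkeeping required to descend from a $j$-dense union in $X$ to a $j$-dense-in-$F$ union. (If one preferred to avoid Proposition 13 one could argue directly inside the subspace $F$, intersecting the closure identity with $F$ and rewriting $j\text{-}\limfunc{cl}_{X}(\,\cdot\,)\cap F = j\text{-}\limfunc{cl}_{F}(\,\cdot\,)$ for subsets of $F$, but invoking Proposition 13 is cleaner since $F$ is $i$-open.)
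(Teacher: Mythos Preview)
Your argument is correct and mirrors the paper's proof of Proposition~15 essentially step for step: reduce via Proposition~13 to the relative notion, extend the cover by $X\setminus F$, apply $ij$-weak Lindel\"{o}fness, use $j$-closedness of $X\setminus F$, and separate by disjointness. The corollary itself then follows, as you note, by running the $ij$ and $ji$ cases, exactly as in the paper.
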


\noindent \textbf{Question 3.} \ Is $i$-closed subset of an
$ij$-weakly Lindel\"{o}f space $X$ is $ij$-weakly Lindel\"{o}f?

\noindent \textbf{Question 4.} \ Is $ij$-regular open subset of an $ij$-weakly Lindel%
\"{o}f space $X$ is $ij$-weakly Lindel\"{o}f?

\noindent The authors expected that the answer of both questions
are false.\medskip

\noindent Observe that, the condition in Proposition 14 that a subset should be $ij$%
-regular closed and $j$-open and in Proposition 15 that a subset should be $%
i $-clopen and $j$-open are necessary and it is not sufficient to be only $i$%
-open by example below show. In general, arbitrary subsets of $ij$-wekly
Lindel\"{o}f spaces need not be $ij$-weakly Lindel\"{o}f relative to the
spaces and so not $ij$-weakly Lindel\"{o}f by Proposition 12.

\begin{example}
Let $\Omega $ denotes the set of ordinals which are less than or equal to
the first uncountable ordinal $\omega _{1}$. This $\Omega $ is an
uncountable well-ordered set with a largest element $\omega _{1}$, having
the property that if $\alpha \in \Omega $ with $\alpha <\omega _{1}$, then $%
\left\{ \beta \in \Omega :\beta \leq \alpha \right\} $ is countable. Since $%
\Omega $ is a totally ordered space, it can be provided with its order
topology. Let us denote this order topology by $\tau _{1}$. Let $\mathcal{B}$
be a collection of all sets in $\Omega $ of the form $\left( a,b\right)
=\left\{ \beta \in \Omega :a<\beta <b\right\} ,\left( a_{0},\omega _{1}%
\right] =\left\{ \beta \in \Omega :a_{0}<\beta \leq \omega _{1}\right\} $
and $\left[ 1,b_{0}\right) =\left\{ \beta \in \Omega :1\leq \beta
<b_{0}\right\} $. Then the collection $\mathcal{B}$ is a base for the order
topology $\tau _{1}$ for $\Omega $. Choose discrete topology as another
topology for $\Omega $ denoted by $\tau _{2}$. So $\left( \Omega ,\tau
_{1},\tau _{2}\right) $ form a bitopological space. Now $\Omega $ is a $1$%
-Lindel\"{o}f space $\left( \text{see \cite{Willard70}}\right) $, so it is $%
12$-weakly Lindel\"{o}f. The subset $\Omega _{0}=\Omega \setminus \left\{
\omega _{1}\right\} $, however is not $1$-Lindel\"{o}f $\left( \text{see
\cite{Willard70}}\right) $. We notice that $\Omega _{0}$ is $1$-open subset
of $\Omega $ since $\Omega _{0}=\left[ 1,\omega _{1}\right) =\left\{ \beta
\in \Omega :1\leq \beta <\omega _{1}\right\} $. So $\Omega _{0}$ is not $12$%
-wekly Lindel\"{o}f by Proposition $7$ since it is $12$-regular and $12$%
-weak $P$-space. Moreover $\Omega _{0}$ is not $12$-weakly Lindel\"{o}f
relative to $\Omega $ by Proposition $13$.
\end{example}

\noindent So we can say that in general, an $ij$-weakly
Lindel\"{o}f property is not hereditary property and therefore
pairwise weakly Lindel\"{o}f property is not so.

\begin{definition}
A bitopological space $X$ is said to be hereditary $ij$-weakly Lindel\"{o}f
if every subspace of $X$ is $ij$-weakly Lindel\"{o}f. $X$ is said hereditary
pairwise\ weakly Lindel\"{o}f if it is both hereditary $ij$-weakly Lindel%
\"{o}f and hereditary $ji$-weakly Lindel\"{o}f.
\end{definition}

\begin{proposition}
Let $X$ be an $ij$-semiregular bitopological space. Then $X$ is $i$-open
hereditary $ij$-weakly Lindel\"{o}f if and only if any $A\in \tau _{ij}^{s}$
is $ij$-weakly Lindel\"{o}f.
\end{proposition}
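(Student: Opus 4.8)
The plan is to recognize that, in spite of its phrasing, this statement is essentially a reformulation that drops out once one unwinds the definition of $ij$-semiregularity. Recall that $X$ being $ij$-semiregular means precisely $\tau_i \equiv \tau_{ij}^{s}$, where $\tau_{ij}^{s}$ is the topology generated by the $ij$-regular open subsets of $X$. The first step is therefore to record the resulting identification: under our hypothesis, a subset $A\subseteq X$ satisfies $A\in\tau_{ij}^{s}$ \emph{if and only if} $A$ is $i$-open in $X$. Everything else follows by unwinding the definition of ``$i$-open hereditary $ij$-weakly Lindel\"{o}f'', which is to say: every $i$-open subspace of $X$ is $ij$-weakly Lindel\"{o}f.

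For the forward implication I would assume $X$ is $i$-open hereditary $ij$-weakly Lindel\"{o}f and take $A\in\tau_{ij}^{s}$; by the identification above $A$ is an $i$-open subset of $X$, hence $A$ is $ij$-weakly Lindel\"{o}f. (If one reads ``$A$ is $ij$-weakly Lindel\"{o}f'' as ``$ij$-weakly Lindel\"{o}f relative to $X$'', this is harmless: for an $i$-open $A$ the two notions coincide by Proposition 13.) For the converse I would assume every $A\in\tau_{ij}^{s}$ is $ij$-weakly Lindel\"{o}f and let $B$ be an arbitrary $i$-open subspace of $X$; again by the identification $B\in\tau_{ij}^{s}$, so $B$ is $ij$-weakly Lindel\"{o}f, and since $B$ was arbitrary, $X$ is $i$-open hereditary $ij$-weakly Lindel\"{o}f.

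I do not expect a genuine obstacle here. The entire content is the equality $\tau_i\equiv\tau_{ij}^{s}$ furnished by $ij$-semiregularity, after which both directions are immediate; the argument does not use the combinatorial characterizations from Propositions 1--7 at all. The only point demanding a little care is the interpretation of ``$ij$-weakly Lindel\"{o}f'' for the members of $\tau_{ij}^{s}$ — as a subspace or relative to $X$ — and, as noted, Proposition 13 reconciles the two for $i$-open sets, so the proof goes through on either reading.
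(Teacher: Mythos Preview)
Your proof is correct, and it is genuinely different from the paper's. You exploit directly the identification $\tau_i\equiv\tau_{ij}^{s}$ that defines $ij$-semiregularity, so that ``every $A\in\tau_{ij}^{s}$ is $ij$-weakly Lindel\"{o}f'' and ``every $i$-open subspace is $ij$-weakly Lindel\"{o}f'' are literally the same statement; both implications then become tautologies. The paper does not argue this way. For the forward direction it uses only the inclusion $\tau_{ij}^{s}\subseteq\tau_i$ (valid in any bitopological space, without semiregularity). For the converse it fixes an $i$-open $B$, takes a cover $\{U_\alpha\}$ of $B$ by $ij$-regular open subsets of $X$, observes that $A=\bigcup_\alpha U_\alpha\in\tau_{ij}^{s}$ is $ij$-weakly Lindel\"{o}f by hypothesis, extracts a countable subfamily whose $j$-closure contains $A\supseteq B$, and then invokes Propositions~11 and~13 (the semiregularity is used here to reduce to $ij$-regular open covers). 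Your route is shorter and more transparent; the paper's route has the minor advantage that its forward direction stands without any semiregularity assumption, but for the converse both arguments ultimately rely on $ij$-semiregularity, and yours uses it more efficiently.
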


\begin{proof}
Let $X$ be an $ij$-semiregular and $i$-open hereditary $ij$-weakly Lindel\"{o}%
f space. Since $\tau _{ij}^{s}\subseteq \tau _{i}$, it is obvious that any $%
A\in \tau _{ij}^{s}$ implies $A\in \tau _{i}$ and hence $A$ is $ij$-weakly
Lindel\"{o}f. Conversely, let $B\subseteq X$ be an $i$-open subset of $ij$%
-weakly Lindel\"{o}f space $X$. By Proposition 13,\ it is sufficient to
prove that $B$ is $ij$-weakly Lindel\"{o}f relative to $X$. Let $\mathcal{U}%
=\left\{ U_{\alpha }:\alpha \in \Delta \right\} $ be a family by $ij$%
-regular open subsets of $X$ such that $B\subseteq \dbigcup\limits_{\alpha
\in \Delta }U_{\alpha }$. The set $A=\dbigcup\limits_{\alpha \in \Delta
}U_{\alpha }\in \tau _{ij}^{s}$, so by hypothesis $A$ is $ij$-weakly Lindel%
\"{o}f. Hence there exists a countable subfamily $\left\{ U_{\alpha
_{n}}:n\in
%TCIMACRO{\U{2115} }%
%BeginExpansion
\mathbb{N}
%EndExpansion
\right\} $ of $\mathcal{U}$ such that $A=j$-$\limfunc{cl}\left(
\dbigcup\limits_{n\in
%TCIMACRO{\U{2115} }%
%BeginExpansion
\mathbb{N}
%EndExpansion
}U_{\alpha _{n}}\right) $ and therefore $B\subseteq j$-$\limfunc{cl}\left(
\dbigcup\limits_{n\in
%TCIMACRO{\U{2115} }%
%BeginExpansion
\mathbb{N}
%EndExpansion
}U_{\alpha _{n}}\right) $. This completes the proof.
\end{proof}

\begin{corollary}
A bitopological space $X$ is open hereditary pairwise weakly Lindel\"{o}f if
and only if any $A\in \tau ^{s}$ is pairwise weakly Lindel\"{o}f.
\end{corollary}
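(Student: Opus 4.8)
The plan is to derive this as the pairwise specialization of the Proposition just proved, in the same manner as the other pairwise corollaries of this paper; the standing hypothesis that $X$ is pairwise semiregular is of course carried along. First I would unpack the pairwise notions in the light of the conventions of Section~2 and Definition~19. By those conventions $X$ is \emph{open hereditary pairwise weakly Lindel\"{o}f} exactly when it is $i$-open hereditary $ij$-weakly Lindel\"{o}f and, symmetrically, $j$-open hereditary $ji$-weakly Lindel\"{o}f; the pairwise semiregularization $\tau^{s}$ is the common bitopology whose two components are generated by the $ij$-regular open and the $ji$-regular open sets, so that ``$A\in\tau^{s}$'' amounts to ``$A\in\tau_{ij}^{s}$'' together with ``$A\in\tau_{ji}^{s}$'', and ``$A\in\tau^{s}$ is pairwise weakly Lindel\"{o}f'' means: every $A\in\tau_{ij}^{s}$ is $ij$-weakly Lindel\"{o}f and every $A\in\tau_{ji}^{s}$ is $ji$-weakly Lindel\"{o}f.

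With that dictionary in place the argument is a two-line reduction. Since $X$ is pairwise semiregular it is in particular $ij$-semiregular, so the Proposition above applies to the pair $(i,j)$ and gives the equivalence ``$X$ is $i$-open hereditary $ij$-weakly Lindel\"{o}f $\Longleftrightarrow$ every $A\in\tau_{ij}^{s}$ is $ij$-weakly Lindel\"{o}f''. Interchanging the roles of $i$ and $j$, and using $ji$-semiregularity, gives the mirror equivalence for $\tau_{ji}^{s}$. Conjoining the two equivalences, and using that a conjunction of biconditionals is the biconditional of the two conjunctions, yields precisely the statement of the Corollary.

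The only point I would flag as needing care — and hence the main obstacle, such as it is — is the bookkeeping around $\tau^{s}$: one must be sure that every member of the pairwise semiregularization is accounted for by one of the two components $\tau_{ij}^{s}$, $\tau_{ji}^{s}$, and that ``$\tau^{s}$-open'' is exactly ``generated by the pairwise regular open sets'', so that nothing slips between the two applications of the Proposition. Once the Section~2 convention (pairwise semiregularization $=$ the bitopology common to the $ij$- and $ji$-semiregularizations) is pinned down, this is immediate, and the proof can honestly be written as: it follows at once from the Proposition together with the definitions of pairwise semiregular, $\tau^{s}$, and open hereditary pairwise weakly Lindel\"{o}f.
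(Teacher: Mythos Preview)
Your proposal is correct and is exactly the approach the paper intends: the Corollary is stated without proof, in keeping with the paper's standing convention (Section~2) that ``every result in terms of $ij$- will have pairwise as a corollary,'' so the argument is precisely the two-fold application of Proposition~16 you describe. Your explicit flagging of the $\tau^{s}$ bookkeeping is appropriate given the paper's loose notation there, but once interpreted as you do the reduction is immediate.
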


\end{document}